\patchcmd{\subsection}{-.5em}{.5em}{}{}
\renewcommand{\tocsection}[3]{
  \indentlabel{\@ifnotempty{#2}{\ignorespaces#1 #2\quad}}\bfseries#3}
\renewcommand{\tocsubsection}[3]{
  \indentlabel{\@ifnotempty{#2}{\ignorespaces#1 #2\quad}}#3}
\newcommand\@dotsep{4.5}
\def\@tocline#1#2#3#4#5#6#7{\relax
  \ifnum #1>\c@tocdepth
  \else
    \par \addpenalty\@secpenalty\addvspace{#2}
    \begingroup \hyphenpenalty\@M
    \@ifempty{#4}{
      \@tempdima\csname r@tocindent\number#1\endcsname\relax
    }{
      \@tempdima#4\relax
    }
    \parindent\z@ \leftskip#3\relax \advance\leftskip\@tempdima\relax
    \rightskip\@pnumwidth plus1em \parfillskip-\@pnumwidth
    #5\leavevmode\hskip-\@tempdima{#6}\nobreak
    \leaders\hbox{$\m@th\mkern \@dotsep mu\hbox{.}\mkern \@dotsep mu$}\hfill
    \nobreak
    \hbox to\@pnumwidth{\@tocpagenum{\ifnum#1=1\bfseries\fi#7}}\par
    \nobreak
    \endgroup
  \fi}
\renewcommand\csname r@tocindent0\endcsname{0pt}
\def\l@subsection{\@tocline{2}{0pt}{2.5pc}{5pc}{}}
\newcounter{results}[section]
\theoremstyle{plain}
\newtheorem{theorem}[results]{Theorem}
\newtheorem{lemma}[results]{Lemma}
\newtheorem{proposition}[results]{Proposition}
\newtheorem{corollary}[results]{Corollary}
\theoremstyle{remark}
\newtheorem{remark}[results]{Remark}
\theoremstyle{definition}
\newtheorem{definition}[results]{Definition}
\numberwithin{equation}{section}
\newcommand{\B}{\mathbb{B}}
\renewcommand{\H}{\mathbb{H}}
\newcommand{\M}{\mathbb{M}}
\newcommand{\N}{\mathbb{N}}
\newcommand{\R}{\mathbb{R}}
\renewcommand{\AA}{\mathscr{A}}
\newcommand{\EE}{\mathscr{E}}
\newcommand{\VV}{\mathscr{V}}
\newcommand{\WW}{\mathscr{W}}
\newcommand{\ii}{{\mbox{\boldmath$i$}}}
\newcommand{\mm}{{\mbox{\boldmath$m$}}}
\renewcommand{\ss}{{\mbox{\boldmath$s$}}}
\renewcommand{\tt}{{\mbox{\boldmath$t$}}}
\newcommand{\dD}{{\mbox{\boldmath$D$}}}
\newcommand{\gG}{{\mbox{\boldmath$G$}}}
\newcommand{\ggamma}{{\mbox{\boldmath$\gamma$}}}
\newcommand{\mmu}{{\mbox{\boldmath$\mu$}}}
\newcommand{\pphi}{{\boldsymbol \phi}}
\newcommand{\sfd}{{\sf d}}
\newcommand{\sfx}{{\sf x}}
\newcommand{\rmC}{{\mathrm C}}
\newcommand{\rmB}{{\mathrm B}}
\newcommand{\rmD}{{\mathrm D}}
\newcommand{\Kliminf}{K\kern-3pt-\kern-2pt\mathop{\rm lim\,inf}\limits}  
\newcommand{\supp}{\mathop{\rm supp}\nolimits}   
\newcommand{\Lip}{\mathop{\rm Lip}\nolimits}          
\newcommand{\Lipb}{\mathop{\rm Lip}_b\nolimits}          
\newcommand{\lip}{\mathop{\rm lip}\nolimits}          
\renewcommand{\d}{{ \mathrm d}}
\newcommand{\restr}[1]{\lower3pt\hbox{$|_{#1}$}}
\newcommand{\Leb}[1]{{\mathscr L}^{#1}}      
\newcommand{\la}{{\langle}}                  
\newcommand{\ra}{{\rangle}}
\newcommand{\down}{\downarrow}              
\newcommand{\eps}{\varepsilon}  
\newcommand{\bambalau}{{\raise.3ex\hbox{$\chi$}}}
\newcommand{\weakto}{\rightharpoonup}
\newcommand{\prob}{\mathcal P}
\renewcommand{\mm}{\mathfrak m}
\newcommand{\J}I
\newcommand{\CE}{\mathsf{C\kern-1pt E}}
\newcommand{\NE}{\mathsf{N\kern-2.5pt E}}
\newcommand{\wCE}{\mathsf{wC\kern-1pt E}}
\newcommand{\pCE}{\mathsf{pC\kern-1pt E}}
\newcommand{\uphi}{\pphi}
\newcommand{\sqm}[1]{\mathsf m_p^p(#1)} 
\newcommand{\rsqm}[1]{\mathsf m_p(#1)} 
\newcommand{\mres}{\mathbin{\vrule height 1.6ex depth 0pt width
0.13ex\vrule height 0.13ex depth 0pt width 1.3ex}}
\newcommand{\ccyl}[2]{\mathfrak C\big (#1,#2 \big)}
\newcommand{\domG}{\mathcal D}
\newcommand{\lin}[1]{\mathsf L_{#1}}
\newcommand{\prbt}{{\prob_p(\R^d)}}
\newcommand{\dist}{\mathsf{dist}}
\newcommand{\pd}{{p}}
\newcommand{\jj}{\mathcal{J}}
\title[The general class of Wasserstein Sobolev spaces]{The general class of Wasserstein Sobolev spaces: density of cylinder functions, reflexivity, uniform convexity and Clarkson's inequalities}
\author{Giacomo Enrico Sodini}
\address{Giacomo Enrico Sodini: Institut für Mathematik - Fakultät für Mathematik - Universität Wien, Oskar-Morgenstern-Platz 1, 1090 Wien (Austria)}
\email{giacomo.sodini@univie.ac.at}
\subjclass{Primary: 46E36, 49Q22; Secondary: 46B10, 46B20}
 \keywords{Metric Sobolev spaces, Cheeger energy, Kantorovich-Wasserstein distance, Optimal Transport, Reflexivity, Uniform convexity, Clarkson's inequalities.}
\begin{document}

\begin{abstract} We show that the algebra of cylinder functions in the Wasserstein Sobolev space $H^{1,q}(\prob_p(X,\sfd), W_{p, \sfd}, \mm)$ generated by a finite and positive Borel measure $\mm$ on the $(p,\sfd)$-Wasserstein space $(\prob_p(X, \sfd), W_{p, \sfd})$ on a complete and separable metric space $(X,\sfd)$ is dense in energy. As an application, we prove that, in case the underlying metric space is a separable Banach space $\B$, then the Wasserstein Sobolev space is reflexive (resp.~uniformly convex) if $\B$ is reflexive (resp.~if the dual of $\B$ is uniformly convex). Finally, we also provide sufficient conditions for the validity of Clarkson's type inequalities in the Wasserstein Sobolev space.
\end{abstract}

\maketitle
\tableofcontents
\thispagestyle{empty}

\section{Introduction}
The study of Sobolev spaces on metric measure spaces is a well established area of interest in metric geometry, we refer to \cite{Heinonen-Koskela98,Koskela-MacManus98, pasqualetto, Savare22} for a general treatment of the subject. It is thus important to provide examples (as concrete as possible) of such Sobolev spaces built on relevant metric spaces; the study carried out in \cite{FSS22} goes precisely in this direction: the authors analyze the properties of the $2$-Sobolev space on the $2$-Wasserstein space \cite{Villani03, Villani09, AGS08, santambrogio} of probability measures on a separable Hilbert space $\H$.

 Let us also mention that, besides being interesting from a purely theoretical point of view, the study of Sobolev spaces on spaces of probability measures has also important applications to functional analysis over spaces of probability measures. Indeed, problems such as evolutionary games or Kolmogorov equations in nonlinear filtering have been treated so far by classical notions of solutions, lacking weak formulations. In this regard, providing a functional analytic framework in which to set those problems may be very relevant. We refer to the introduction of \cite{FSS22} for a more detailed list of applications and references.  

In this work we generalize the results in \cite{FSS22} considering a general complete and separable metric space $(X,\sfd)$, instead of a Hilbert space, and we treat general exponents $p,q$ both in the order of the Wasserstein distance and in the Sobolev space (instead of $p=q=2$). Before entering into the details of the present work, let us briefly recall the definition of metric Sobolev space and the results of \cite{FSS22} that are the starting point for our analysis. 

Among the possible approaches to Sobolev spaces on metric  measure  spaces (for example the Newtonian one \cite{Shanmugalingam00, Bjorn-Bjorn11}), here we focus on the approach, strictly related to the ideas of Cheeger \cite{Cheeger99}, that is contained in the work of Ambrosio, Gigli and Savaré \cite{AGS14I} where they define the following concept of $q$-relaxed ($q \in (1,+\infty))$ gradient: given a metric measure space $(X, \sfd, \mm)$, we say that $G \in L^q(X, \mm)$ is a $q$-relaxed gradient of $f \in L^0(X, \mm)$ if there exist a sequence $(f_n)_n \subset \Lipb(X,\sfd)$ and $\tilde{G} \in L^q(X, \mm)$ such that
\begin{enumerate}
    \item $f_n \to f$ in $L^0(X, \mm)$ and $\lip_\sfd f \weakto \tilde{G}$ in $L^q(X, \mm)$,
    \item $\tilde{G} \le G$ $\mm$-a.e.~in $X$,
\end{enumerate}
where, for $f \in \Lipb(X, \sfd)$, the asymptotic Lipschitz constant of $f$ is defined as
\begin{equation}\label{eq:aslipintro}
  \lip_\sfd f(x):=
  \limsup_{y,z\to x,\ y\neq z}\frac{|f(y)-f(z)|}{\sfd(y,z)}, \quad x \in X.
\end{equation}
The $q$-Cheeger energy of $f \in L^0(X, \mm)$ is then defined as
\[ \CE_q(f) := \int_X |\rmD f|_{\star, q}^q \d \mm,\]
where $|\rmD f|_{\star, q}$ is the minimal relaxed gradient of $f$ i.e.~the element of minimal norm in the set of relaxed gradients of $f$. It is well known that the $q$-Cheeger energy can be also characterized as the relaxation of the so called pre-$q$-Cheeger energy 
\[ \pCE_q(f):= \int_X (\lip_\sfd f)^q \d \mm, \quad f \in \Lipb(X, \sfd),\]
in the sense that
\begin{equation}\label{eq:relaxintroche}
  \CE_{q}(f) = \inf \left \{ \liminf_{n \to + \infty}
    \pCE_{q}(f_n) : (f_n)_n \subset \Lipb(X, \sfd), \, f_n \to f \text{ in } L^0(X,\mm)
  \right \}.
\end{equation}
The Sobolev space \emph{à la Cheeger} $H^{1,q}(X, \sfd, \mm)$ is thus the vector space of functions $f\in L^q(X, \mm)$ with finite Cheeger energy endowed with the norm 
\[ |f|^q_{H^{1,q}(X, \sfd, \mm)}:= \int_X |f|^q \d \mm + \CE_q(f)\]
which makes it a Banach space. A remarkable result \cite{AGS14I} is the so called strong approximation property by Lipschitz functions: if $f \in H^{1,q}(X,\sfd, \mm)$ then there exists a sequence $(f_n)_n \subset \Lipb(X,\sfd)$ such that
\begin{equation}\label{eq:sap}
    f_n \to f, \quad \lip_\sfd f_n \to |\rmD f|_{\star, q} \text{ in } L^q(X, \mm).
\end{equation}
In \cite{FSS22} the authors provide a general criterion for the validity of the same property where, instead of the whole $\Lipb(X,\sfd)$, the approximating sequence can be taken from a suitable subalgebra $\AA \subset \Lipb(X,\sfd)$ satisfying
\[\mathbbm{1}\in \AA,\quad
  \text{for every $x_0,x_1\in X$ there exists $f\in \AA$:\quad $f(x_0)\neq f(x_1)$}, 
\]
where $\mathbbm{1}:X \to \R$ is the constant function equal to $1$. This is equivalent to say that $\AA$ is \textit{dense in energy in the Sobolev space}. In particular (see Theorem \ref{theo:startingpoint} below or \cite[Theorem 2.12]{FSS22} for the proof) it is proven that an equivalent condition is that for every $y \in X$ it holds
\begin{equation}\label{eq:suffcond}
|\rmD \sfd_y |_{\star, q, \AA} \le 1 \quad \mm\text{-a.e. in } X,   
\end{equation}
where $\sfd_y(x):= \sfd(x,y)$ and $|\rmD \cdot|_{\star,q ,\AA}$ is a suitably $\AA$-adapted notion of minimal relaxed gradient (cf.~Definition \ref{def:relgrad}). The second part of \cite{FSS22} is devoted to apply the criterion in \eqref{eq:suffcond} to the $L^2$-Kantorovich-Rubinstein-Wasserstein (in brief, Wasserstein) space on a separable Hilbert space $\mathbb{H}$, denoted by $(\prob_2(\mathbb{H}), W_2)$, with the algebra $\ccyl{\prob(\mathbb{H})}{\rmC_b^1(\mathbb{H})}$ of cylinder functions (cf.~Definition \ref{def:cyl}), which is the algebra of functions of the form
\begin{equation}\label{eq:acyl}
F(\mu) = \psi \left ( \int_{\mathbb{H}} \phi_1 \d \mu, \dots, \int_{\mathbb{H}} \phi_N \d \mu \right ), \quad \mu \in \prob(\mathbb{H}),  
\end{equation}
where $\psi \in \rmC_b^1(\R^N)$ and $\phi_n \in \rmC_b^1(\mathbb{H})$ for $n=1, \dots, N \in \N$. The density of $\ccyl{\prob(\mathbb{H})}{\rmC_b^1(\mathbb{H})}$ in $H^{1,2}(\prob_2(\mathbb{H}), W_2, \mm)$ (here $\mm$ is a finite and positive Borel measure on $\prob_2(\mathbb{H})$) is particularly interesting since cylinder functions come already with some structure; in particular to any function $F$ as in \eqref{eq:acyl} we can associate a notion of gradient given by
\begin{equation}\label{eq:diffintro}
\rmD F(\mu, x):= \sum_{n=1}^N \partial_n \psi \left ( \int_{\mathbb{H}} \phi_1 \d \mu, \dots, \int_{\mathbb{H}} \phi_N \d \mu \right ) \nabla \phi_n(x), \quad (\mu, x) \in \prob(\mathbb{H})\times \mathbb{H}.
\end{equation}
It is proven in \cite[Proposition 4.9]{FSS22} that, for every $F \in \ccyl{\prob(\mathbb{H})}{\rmC_b^1(\mathbb{H})}$, we have
\begin{equation}\label{eq:pcera}
\pCE_2(F) = \int_{\prob_2(\mathbb{H})\times \mathbb{H}} | \rmD F(\mu, x)|^2 \d \mu (x) \d \mm(\mu),   
\end{equation}
so that the pre-Cheeger energy satisfies the parallelogram identity (cf.~\cite[Subsection 4.2]{FSS22}) and thus forces the Cheeger energy to be a quadratic form. This amounts to say that the Sobolev space $H^{1,2}(\prob_2(\mathbb{H}), W_2, \mm)$ is a Hilbert space, a crucial property in the theory of metric Sobolev spaces \cite{Lott-Villani07,Sturm06I, Sturm06II, AGS14I, Gigli15-new}.
\medskip
\quad \\
The aim of this work is, following the approach of \cite{FSS22}, to study the properties of the more general class of Wasserstein Sobolev spaces $H^{1,q}(\prob_p(\B, \sfd_{\|\cdot\|}), W_{p, \sfd_{\|\cdot\|}}, \mm)$ ($p,q \in (1,+\infty)$), where $(\B,\|\cdot\|)$ is a separable Banach space, $(\prob_p(\B, \sfd_{\|\cdot\|}), W_{p, \sfd_{\|\cdot\|}})$ is the $(p,\sfd_{\|\cdot\|})$-Wasserstein space on it and $\mm$ is positive and finite Borel measure on $\prob_p(\B, \sfd_{\|\cdot\|})$. First of all, in Proposition \ref{prop:equality} we provide a generalization of the representation in \eqref{eq:pcera}: if $F$ is a cylinder function in $\prob(\B)$ as in \eqref{eq:acyl}, then 
\begin{equation}\label{eq:mixpce}
\pCE_q(F) = \int_{\prob_p(\B, \sfd_{\|\cdot\|})} \left ( \int_{\B} \|\rmD F(\mu, x)\|_*^{p'} \d \mu(x) \right )^{q/p'} \d \mm (\mu),
\end{equation}
where $p'= p/(p-1)$ is the conjugate exponent of $p$, $\|\cdot\|_*$ is the dual norm in $\B^*$ and $\rmD F$ is adapted from \eqref{eq:diffintro} in the obvious way. Notice that, differently from the representation \eqref{eq:pcera}, the right hand side of \eqref{eq:mixpce} doesn't coincide with the $q$-th power of a $L^q$ norm on a suitable space but it is rather the $q$-th power of the norm in the $L^q$-direct integral of a family of Banach spaces (see Subsection \ref{sec:refl} for more details).

Approaching first the case in which $\B=\R^d$ and $\|\cdot\|$ is a sufficiently regular norm on $\R^d$ (cf.\eqref{eq:costf}), we are able to prove the density of $\ccyl{\prob(\R^d)}{\rmC_b^1(\R^d)}$ in the corresponding Sobolev space (first part of Theorem \ref{thm:main}) adapting the techniques of \cite{FSS22} (in particular using \eqref{eq:suffcond} and the representation in \eqref{eq:mixpce}) to this more general situation: this requires more sophisticated results (which may be interesting by themselves) in terms of the properties of Kantorovich potentials which are no longer convex if $p \ne 2$ (see Section \ref{subsec:Kuseful}).

The density of cylinder functions is then extended to the Sobolev-Wasserstien space corresponding to an arbitrary norm on $\R^d$ by an approximation procedure and then to an arbitrary separable Banach space $(\B, \|\cdot\|)$ via a standard embedding technique in $\ell^\infty(\N)$ and finite dimensional projections (second part of Theorem \ref{thm:main} and Corollary \ref{cor:densban}, respectively). The precise statement of the result is the following.
\begin{theorem}\label{maintheo} Let $(\B, \|\cdot\|)$ be a separable Banach space; then the algebra of cylinder functions $\ccyl{\prob(\B)}{\rmC_b^1(\B)}$ is dense in $q$-energy in the Sobolev space $H^{1,q}(\prob_p(\B, \sfd_{\|\cdot\|}), W_{p, \sfd_{\|\cdot\|}}, \mm)$.
\end{theorem}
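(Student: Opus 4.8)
The plan is to reduce Theorem \ref{maintheo} to the criterion \eqref{eq:suffcond} applied to the algebra $\AA = \ccyl{\prob(\B)}{\rmC_b^1(\B)}$, and to do so by a three-step reduction: from a general separable Banach space, to $\R^d$ with an arbitrary norm, to $\R^d$ with a smooth strictly convex norm. I would first recall that, by Theorem \ref{theo:startingpoint}, density in $q$-energy is equivalent to the estimate $|\rmD \sfd_y|_{\star,q,\AA} \le 1$ $\mm$-a.e.\ for every $y \in \prob_p(\B)$, where here the role of the metric space is played by $(\prob_p(\B,\sfd_{\|\cdot\|}), W_{p,\sfd_{\|\cdot\|}})$ and $\sfd_y(\mu) = W_{p,\sfd_{\|\cdot\|}}(\mu, y)$. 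So the whole matter is to build, for a fixed target measure $y = \nu$, a sequence of cylinder functions $(F_k)_k$ converging to $\mu \mapsto W_{p,\sfd_{\|\cdot\|}}(\mu,\nu)$ in $L^0(\mm)$ whose asymptotic Lipschitz constants (with respect to $W_{p,\sfd_{\|\cdot\|}}$) are asymptotically bounded by $1$ in $L^q(\mm)$.

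For the core case $\B = \R^d$ with a norm $\|\cdot\|$ that is $C^1$ away from the origin and uniformly convex (the regularity in \eqref{eq:costf}), the strategy is to exploit Kantorovich duality: $W_{p,\|\cdot\|}^p(\mu,\nu) = \int \varphi \, d\mu + \int \varphi^c \, d\nu$ for an optimal Kantorovich potential $\varphi$ for the cost $c = \|\cdot\|^p$. The function $\mu \mapsto \int \varphi\, d\mu$ is linear in $\mu$, hence a cylinder function once $\varphi$ is replaced by a $C^1_b$ approximation; the defect is controlled by the regularity theory of $c$-concave potentials for the cost $\|x-y\|^p$. The key analytic input is that such potentials are locally Lipschitz with a gradient bound tied to the transport displacement: where $\varphi$ is differentiable at $x$ with optimal target $T(x)$, one has $\nabla \varphi(x) = p\|x-T(x)\|^{p-1} \, \partial\|\cdot\|(x - T(x))$ (the duality map), and hence $\|\nabla \varphi(x)\|_* = p\|x-T(x)\|^{p-1}$. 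Plugging this into the representation \eqref{eq:mixpce} for $\pCE_q$ of the cylinder approximations and using $\int \|x - T(x)\|^p \, d\mu = W_p^p(\mu,\nu)$ together with the chain rule from $\varphi$ to $W_p = (\int\varphi\,d\mu + \cdots)^{1/p}$ — the derivative of $t \mapsto t^{1/p}$ contributing exactly the factor that cancels the $p$ and the powers of $W_p$ — gives the pointwise bound $\lip_{W_{p,\|\cdot\|}}(\sfd_\nu)(\mu) \le 1$, which is \eqref{eq:suffcond}. This is the step that "requires more sophisticated results in terms of Kantorovich potentials which are no longer convex if $p\ne 2$", flagged in the introduction and presumably carried out in Section \ref{subsec:Kuseful}; it is the main obstacle, since for $p=2$ one had $c$-concave $=$ convex-minus-quadratic and could invoke Alexandrov-type second-order theory, whereas for general $p$ one must work with the weaker semiconcavity/local-Lipschitz regularity of $\|\cdot\|^p$ and handle the non-differentiability locus carefully (using that $\mu$-a.e.\ the potential is differentiable along optimal plans).

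The passage from a smooth strictly convex norm on $\R^d$ to an arbitrary norm on $\R^d$ is an approximation argument: approximate $\|\cdot\|$ uniformly by norms $\|\cdot\|_\eps$ satisfying \eqref{eq:costf} with $\|\cdot\|_\eps \to \|\cdot\|$ and $W_{p,\|\cdot\|_\eps} \to W_{p,\|\cdot\|}$ uniformly on bounded sets (say by mollifying-and-symmetrizing, or by the standard trick $\|\cdot\|_\eps^2 = \|\cdot\|^2 * \rho_\eps + \eps|\cdot|^2$ then taking $p/2$ powers appropriately); since $\ccyl{\prob(\R^d)}{\rmC_b^1(\R^d)}$ does not depend on the norm and the asymptotic Lipschitz constant is computed against the varying distance, one checks that the $\eps$-version of \eqref{eq:suffcond} passes to the limit, or more directly uses stability of the Cheeger energy under the convergence $W_{p,\|\cdot\|_\eps} \to W_{p,\|\cdot\|}$. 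Finally, for a general separable Banach space $(\B,\|\cdot\|)$ one embeds $\B$ isometrically into $\ell^\infty(\N)$, but since $\ell^\infty$ is not separable and $\prob_p$ lives on a separable subspace, one instead uses a sequence of finite-dimensional linear projections $\pi_n \colon \B \to \R^{d_n}$ (via a norming sequence of functionals $(\ell_j)_j \subset \B^*$) with $\|\pi_n(x)\| \uparrow \|x\|$, pushes forward $\mm$ under $(\pi_n)_\#$ on the level of $\prob_p$, applies the finite-dimensional result with the (generally non-smooth) quotient norm on $\R^{d_n}$, and lifts cylinder functions on $\prob(\R^{d_n})$ to cylinder functions on $\prob(\B)$ by precomposition with $\pi_n$. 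The diagonal sequence then realizes $\sfd_\nu$ as an energy-limit of cylinder functions on $\prob(\B)$, establishing \eqref{eq:suffcond} for $\AA = \ccyl{\prob(\B)}{\rmC_b^1(\B)}$ and hence, by Theorem \ref{theo:startingpoint}, the claimed density in $q$-energy.
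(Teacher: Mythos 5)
Your proposal follows the same skeleton as the paper: reduce via Theorem~\ref{theo:startingpoint} to the bound $|\rmD W_\nu|_{\star,q,\AA}\le 1$ on a dense set of targets $\nu$, attack the case of a smooth strictly convex norm on $\R^d$ through Kantorovich duality, then pass to an arbitrary norm on $\R^d$ and finally to separable $\B$ by finite-dimensional projections. The formal computation you record for the smooth case --- $\|\nabla\varphi(x)\|_* = c_p\,\|x-T(x)\|^{p-1}$, so that after the chain rule for $t\mapsto t^{1/p}$ the powers of $W_p$ cancel --- is correct and captures why one expects the estimate. But as written it is a heuristic, not a proof: $W_\nu$ is not a cylinder function and cannot be differentiated directly; the Kantorovich potential $\varphi$ is not $\rmC^1_b$ (it grows like $\|x\|^p$ and is only locally Lipschitz), and it depends on $\mu$, whereas a cylinder function in $\AA$ may not. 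Your sketch replaces the entire construction with ``replace $\varphi$ by a $C^1_b$ approximation; the defect is controlled by the regularity theory,'' but this is precisely where the work is.

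What is actually needed, and what the paper does: fix a countable dense set $\{\mu^h\}$ in $\prob_p(\R^d)$, take the unique normalized potentials $(\varphi_h,\varphi_h^*)$ of Theorem~\ref{thm:ot} for each pair $(\nu,\mu^h_\eps)$, mollify so that each $\mu\mapsto\int(\varphi_h^*\ast\kappa_\eps)\,\d\mu$ is a genuine cylinder function, form the monotone approximation $G_k(\mu)=\max_{h\le k}\int u_{h,\eps}\,\d\mu$ of $\frac1pW_p^p(\mu_\eps,\nu)$, use the truncation property (Theorem~\ref{thm:omnibus}(4)) and Lemma~\ref{le:speriamo-che-basti} to bound $|\rmD(\zeta\circ G_k)|_{\star,q,\AA}$ on each piece of the maximum, and then pass $k\to\infty$ via Lemma~\ref{le:limsup-approximation}. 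The genuinely new analytic input for $p\neq 2$, which your sketch omits, is the stability statement that if $\int u_{h_k}\,\d\mu_\eps\to\frac1pW_p^p(\mu_\eps,\nu)$ then $\limsup_k\int\|\nabla(\varphi^*_{h_k}\ast\kappa_\eps)\|_*^{p'}\,\d\mu\le W_p^p(\nu,\mu_\eps)$; this is exactly what Propositions~\ref{prop:oldcl23} and~\ref{prop:oldcl4567} (compactness and $\mathcal L^d$-a.e.\ convergence of gradients of a bounded family of $c$-concave potentials) are built to deliver, and it replaces the Alexandrov second-order theory available when $p=2$. A smaller divergence: for the passage from a smooth norm to an arbitrary norm on $\R^d$ you propose a \emph{uniform} approximation by smooth strictly convex norms, whereas the paper constructs a \emph{monotone increasing} family $\|\cdot\|_k\uparrow\|\cdot\|$, so that $W_{p,k}\le W_{p,\sfd_{\|\cdot\|}}$ and the Cheeger energy is monotone in the distance; the cited $\Gamma$-liminf stability result is stated precisely for increasing sequences of distances, and without monotonicity the transfer of density from the approximating spaces is not immediate.
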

Let us also mention that we are able to extend the density result (Theorem \ref{thm:main3}) to an arbitrary complete and separable metric space $(X,\sfd)$ where, instead of using $\rmC_b^1$ functions (which of course are not available) to generate cylinder functions on $\prob(X)$, we consider a sequence $(\phi_k)_k \subset \Lipb(X, \sfd)$ such that
\[ \sfd(x,y)= \sup_k |\phi_k(x)-\phi_k(y)|, \quad \text{ for every } x,y \in X,\]
and we use the smallest unital algebra containing $(\phi_k)_k$.

In the same spirit of the Hilbertianity result in \cite{FSS22}, we are led to study which properties of the Banach space $\B$ are transferred to the Sobolev space, thanks to the density of cylinder functions provided by the above Theorem \ref{maintheo}. This is the case of uniform convexity and the validity of some Clarkson's type inequalities. The argument we adopt for such properties is similar: since the pre-Cheeger energy of a cylinder function as in \eqref{eq:mixpce} corresponds to the $q$-th power of a norm in a suitable Banach space, it is sufficient to prove that such norm enjoys the uniform convexity (resp.~the validity of Clarkson's type inequalities) to obtain that the pre-Cheeger satisfies the same property; thanks to the density of cylinder functions such a property is extended to the Cheeger energy and thus to the whole Sobolev norm (Theorems \ref{thm:uc} and \ref{cor:ilprimo}).
For what concerns reflexivity the argument is somehow standard: again using the representation of the pre-Cheeger energy in \eqref{eq:mixpce}, we can isometrically embed the Sobolev space into a reflexive Banach space (Theorem \ref{thm:refl}). The precise statement regarding reflexivity and uniform convexity is reported below (we refer to Subsection \ref{sec:clar} for the results related to Clarkson's type inequalities).
\begin{theorem} Let $(\B, \|\cdot\|)$ be a separable Banach space. If $\B$ is reflexive (resp.~its dual is uniformly convex), then the Sobolev space $H^{1,q}(\prob_p(\B, \sfd_{\|\cdot\|}), W_{p, \sfd_{\|\cdot\|}}, \mm)$ is reflexive (resp.~uniformly convex).
\end{theorem}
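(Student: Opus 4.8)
\emph{Strategy and ambient space.} The plan is to produce a linear isometric embedding of $H^{1,q}(\prob_p(\B,\sfd_{\|\cdot\|}),W_{p,\sfd_{\|\cdot\|}},\mm)$ into a single Banach space $\mathcal Z$ built from $\B$ and $\mm$, and then to transfer reflexivity, resp.\ uniform convexity, from $\mathcal Z$ to the (necessarily closed) image. Guided by the representation \eqref{eq:mixpce}, I would let $\mathcal Y$ be the $L^q$-direct integral over $(\prob_p(\B,\sfd_{\|\cdot\|}),\mm)$ of the fibres $\mathcal Y_\mu:=L^{p'}(\B,\mu;\B^*)$, i.e.\ the Banach space of (equivalence classes of) measurable sections $V$ with $\|V\|_{\mathcal Y}^q:=\int_{\prob_p(\B,\sfd_{\|\cdot\|})}\Big(\int_{\B}\|V(\mu,x)\|_*^{p'}\,\d\mu(x)\Big)^{q/p'}\d\mm(\mu)<\infty$, and set $\mathcal Z:=L^q(\prob_p(\B,\sfd_{\|\cdot\|}),\mm)\oplus_q\mathcal Y$ (the $\ell^q$-direct sum). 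By \eqref{eq:mixpce}, the assignment $F\mapsto(F,\rmD F)$ is then a linear isometry of the algebra $\ccyl{\prob(\B)}{\rmC_b^1(\B)}$, endowed with the norm $F\mapsto(|F|_{L^q(\mm)}^q+\pCE_q(F))^{1/q}$, into $\mathcal Z$.

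\emph{Geometry of $\mathcal Z$.} Because $1<q<\infty$, the factor $L^q(\prob_p(\B,\sfd_{\|\cdot\|}),\mm)$ is reflexive and uniformly convex. If $\B$ is reflexive, so is $\B^*$, and (being the dual of a separable reflexive space) $\B^*$ is separable; hence each fibre $\mathcal Y_\mu=L^{p'}(\mu;\B^*)$ is reflexive, since $1<p'<\infty$. It is then classical that the $L^q$-direct integral of a measurable, separable field of reflexive Banach spaces is reflexive, so $\mathcal Y$, and therefore $\mathcal Z$, is reflexive. If instead $\B^*$ is uniformly convex, then each fibre $\mathcal Y_\mu=L^{p'}(\mu;\B^*)$ is uniformly convex with a modulus of convexity depending only on $p'$ and on that of $\B^*$, in particular independent of $\mu$; propagating Clarkson-type inequalities with $\mu$-uniform constants from $\B^*$ to the fibres and then integrating, one obtains that $\mathcal Y$, and hence $\mathcal Z$, is uniformly convex.

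\emph{Passing from cylinder functions to the Sobolev space.} By Theorem \ref{maintheo}, every $f\in H^{1,q}(\prob_p(\B,\sfd_{\|\cdot\|}),W_{p,\sfd_{\|\cdot\|}},\mm)$ is approximated by cylinder functions $F_n$ with $F_n\to f$ and $\lip_{\sfd_{\|\cdot\|}}F_n\to|\rmD f|_{\star,q}$ in $L^q(\mm)$, so that $\|\rmD F_n\|_{\mathcal Y}^q=\pCE_q(F_n)\to\CE_q(f)$. When $\B^*$ is uniformly convex, combining the convexity estimate $\lip_{\sfd_{\|\cdot\|}}\tfrac12(F_n+F_m)\le\tfrac12\big(\lip_{\sfd_{\|\cdot\|}}F_n+\lip_{\sfd_{\|\cdot\|}}F_m\big)$ with the lower semicontinuity of $\CE_q$ yields $\|\tfrac12(\rmD F_n+\rmD F_m)\|_{\mathcal Y}\to\CE_q(f)^{1/q}=\lim_n\|\rmD F_n\|_{\mathcal Y}$, and uniform convexity of $\mathcal Y$ forces $(\rmD F_n)$ to be Cauchy; in the merely reflexive case one instead extracts a weak limit in $\mathcal Y$ and identifies its norm by Mazur's lemma, using that convex combinations of the $\rmD F_n$ are gradients of cylinder functions still converging to $f$, hence admissible competitors in the variational definition of $\CE_q$. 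In either case the limit $\rmD f\in\mathcal Y$ does not depend on the approximating sequence, satisfies $\|\rmD f\|_{\mathcal Y}^q=\CE_q(f)$, and depends linearly on $f$; therefore $f\mapsto(f,\rmD f)$ extends the isometry of the first step to a linear isometry of $H^{1,q}(\prob_p(\B,\sfd_{\|\cdot\|}),W_{p,\sfd_{\|\cdot\|}},\mm)$ onto a closed subspace of $\mathcal Z$. As reflexivity and uniform convexity are inherited by closed subspaces, both assertions follow (uniform convexity also re-yielding reflexivity via Milman--Pettis).

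\emph{Main obstacle.} The closed-subspace transfer is routine; the real work lies in two places. The first is the rigorous construction of the direct integral $\mathcal Y$: the measurability and separability of the field $\mu\mapsto\mathcal Y_\mu$, and the proofs that fibrewise reflexivity — resp.\ fibrewise uniform convexity with a $\mu$-uniform modulus — ascends to $\mathcal Y$. The second, and main, difficulty is the extension step: one must combine density in energy with the closure properties of the minimal relaxed gradient and with the geometry of $\mathcal Y$ to upgrade the gradient $F\mapsto\rmD F$ on cylinder functions to a genuine linear isometry on all of $H^{1,q}$ — and here uniform convexity of $\mathcal Y$ simplifies the argument substantially, whereas the purely reflexive case requires the Mazur-lemma detour together with the variational characterisation of $\CE_q$.
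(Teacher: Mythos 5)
Your strategy for reflexivity — embedding $H^{1,q}$ into $L^q(\mm)\oplus_q\mathcal Y$ where $\mathcal Y$ is the $L^q$-direct integral of the fibres $L^{p'}(\B,\mu;\B^*)$ — is the same as the paper's, and the geometry of $\mathcal Z$ is handled correctly. The gap is in the extension step, which you yourself flag as the main difficulty but then resolve too optimistically. You assert that a single-valued, \emph{linear} map $f\mapsto\rmD f\in\mathcal Y$ exists on all of $H^{1,q}$. But the closure $\gG$ of $\{(F,\rmD_p F):F\text{ cylinder}\}$ in $L^q(\mm)\times\mathcal Y$ is a closed linear subspace whose fibre over $F=0$ (the set of limits of $\rmD_p F_n$ with $F_n\to 0$, denoted $\dD_\mm 0$ in the paper) need not be trivial; the ``gradient'' of a Sobolev function is thus a nonempty convex set, not a point. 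In the uniformly convex case your Cauchy argument does single out the minimal-norm element $G(f)$ of that set, but minimal-norm selection over a convex set is not a linear operation: $G(f)+G(g)$ lies in the fibre over $f+g$ and so satisfies $\|G(f)+G(g)\|_{\mathcal Y}\ge\CE_q(f+g)^{1/q}$, while equality (hence $G(f+g)=G(f)+G(g)$) is not justified. In the merely reflexive case even uniqueness of the weak limit point can fail, since $\mathcal Y$ need not be strictly convex, and Mazur's lemma only pins down the norm of \emph{some} limit point, not the point itself.

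The paper avoids both problems by never trying to select a gradient: it passes to the quotient $\gG_0:=\gG/(\{0\}\times\dD_\mm 0)$. The map $f\mapsto[F]$ is then automatically linear and well-defined, the quotient norm coincides by construction with the Sobolev norm (the $\min_{G\in\dD_\mm f}\|G\|_{\mathcal Y}$ is built into the quotient norm), and reflexivity follows because both closed subspaces and quotients of reflexive spaces are reflexive. For uniform convexity the paper takes a genuinely different route: using the quantitative characterization of Proposition \ref{prop:charuc}, it transfers a convexity inequality from the pre-Cheeger energy on cylinder functions (where it holds because $\mathcal Y$ is uniformly convex, via Proposition \ref{prop:equality}) to the Cheeger energy by density and lower semicontinuity, and concludes by the $q$-sum with the uniformly convex $L^q(\mm)$-norm. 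If you wish to keep your embedding approach for uniform convexity, the cleanest repair is the same quotient trick, combined with the classical fact that quotients of uniformly convex spaces are uniformly convex; trying to force the gradient map to be a genuine linear selector is the wrong path.
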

\medskip
\paragraph{\em\bfseries Plan of the paper} In \textbf{Section \ref{sec:main1}} we summarize the construction of metric Sobolev spaces depending on a subalgebra of  Lipschitz  and bounded functions $\AA$ and we report a few results of \cite{FSS22} concerning the density in energy of $\AA$. The beginning of \textbf{Section \ref{sec:Wasserstein}} contains the general framework for Wasserstein spaces we are going to work with and Subsection \ref{subsec:Kuseful} presents some compactness results for  Kantorovich  potentials in a specific geometric situation: these results combine the ideas of \cite{Gigli-Figalli,Gangbo-McCann96} to provide useful Lipschitz estimates on the potentials. \textbf{Section \ref{sec:main2}} contains the core of our density results: after stating the definition of cylinder functions in the framework of a metric space $(X,\sfd)$, in Subsection \ref{subsec:cylindrcial}, we study the asymptotic Lipschitz constant of a cylinder function (Proposition \ref{prop:equality}); in Subsection \ref{subsec:wsspace} we prove the density result when the base space is $\R^d$ endowed with an arbitrary norm (Theorem \ref{thm:main}); in Subsection \ref{sec:densxd} we generalize this result to a complete and separable metric space $(X,\sfd)$ (Theorem \ref{thm:main3}). Finally, in Section \ref{sec:refl} we prove that relevant properties of the underlying Banach space pass to the Sobolev space: first we treat reflexivity and uniform convexity (Theorems \ref{thm:refl}, \ref{thm:uc}) and then we study Clarkson's type inequalities in Subsection \ref{sec:clar} (see in particular Theorem \ref{cor:ilprimo}).

\medskip
\paragraph{\em\bfseries Acknowledgments} The author warmly thanks G.~Savaré for proposing the problem and for his many valuable suggestions.  The author is grateful to E.~Pasqualetto for his comments on a first draft of the present paper.  The author also gratefully acknowledges the support of the Institute for Advanced Study of the Technical University of Munich, funded by the German Excellence Initiative. The author was also partially financed by the Austrian Science Fund (FWF) project F65.  The authors is grateful to the anonymous reviewer for their valuable comments.

\section{Metric Sobolev spaces and density of subalgebras}
\label{sec:main1}
In this section we describe the general metric setting and we list a few results of \cite{FSS22} which are the starting point of our analysis. For this whole section, we fix a separable metric space $(X,\sfd)$, a finite and positive Borel measure $\mm$ on $(X,\sfd)$ (the triple $(X, \sfd, \mm)$ is called a Polish metric-measure space), an exponent $q \in (1,+\infty)$ and a unital and separating subalgebra $\AA \subset \Lip_b(X, \sfd)$ i.e.~satisfying 
\begin{equation}
  \label{eq:116}
  \mathbbm{1}\in \AA,\quad
  \text{for every $x_0,x_1\in X$ there exists $f\in \AA$:\quad $f(x_0)\neq f(x_1)$}, 
 \end{equation}
 where $\Lip_b(X,\sfd)$ is the space of real valued and bounded $\sfd$-Lipschitz functions on $X$ and $\mathbbm{1}:X \to \R$ is the constant function equal to $1$.\\
If $(Y, \sfd_Y)$ is another complete and separable metric space and $f:X \to Y$ is a Borel function, we define the finite (with the same total mass of $\mm$) and Borel measure $f_\sharp \mm$  on $(Y, \sfd_Y)$ as
\begin{equation}\label{eq:push} f_{\sharp}\mm(B) = \mm(f^{-1}(B)) \quad \text{ for every Borel set } B \subset Y.
\end{equation}
 We define the $\sfd$-asymptotic Lipschitz constant of $f\in \Lip_b(X, \sfd)$ as
\begin{equation}
  \label{eq:1}
  \lip_\sfd f(x):=\lim_{r\down0}\Lip(f,\rmB_{\sfd}(x,r),\sfd)=
  \limsup_{y,z\to x,\ y\neq z}\frac{|f(y)-f(z)|}{\sfd(y,z)}, \quad x \in X,
\end{equation}
where $\rmB_{\sfd}(x,r) \subset X$ denotes the $\sfd$-open ball centered at $x$ with  radius  $r>0$ and,
for $A \subset X$, the quantity $\Lip(f,A,\sfd)$ is defined as
\[ \Lip(f,A,\sfd):= \sup_{x,y\in A, \, x \neq y}\frac{|f(x)-f(y)|}{\sfd(x,y)}. \]
We denote by $L^0(X, \mm)$ the space of real valued and Borel measurable functions on $X$, identified up to equality $\mm$-a.e.~and, analogously, by $L^r(X,\mm)$ the usual Lebesgue spaces of real valued, $r$-summable and Borel measurable functions, identified up to equality $\mm$-a.e., $r\in [1,+\infty]$. We endow $L^0(X,\mm)$ with the topology of the convergence in $\mm$-measure, while $L^r(X, \mm)$ is endowed with the usual norm, $r \in [1, +\infty]$. When dealing with vector-valued or extended real-valued functions we also specify the codomain in the notation for $L^r$-spaces, i.e.~we write $L^r(X,\mu; Y)$, where $Y$ is a (subset of a) Banach space. The following is the definition of relaxed gradient we adopt \cite{AGS14I,AGS13,Savare22} (see also \cite{Shanmugalingam00, Bjorn-Bjorn11} for a different approach).
\begin{definition}[$(q,\AA)$-relaxed gradient]
  \label{def:relgrad}
  We say that $G\in L^q(X,\mm)$ is a $(q,\AA)$-relaxed gradient of $f\in L^0(X,\mm)$ if
  there exist a sequence $(f_n)_{n\in \N}\in \AA$ and $\tilde{G} \in L^q(X, \mm)$ such that:
  \begin{enumerate}
  \item $f_n\to f$ in $\mm$-measure and
    $\lip_\sfd f_n\to \tilde G$ weakly in $L^q(X,\mm)$;
  \item $\tilde G\le G$ $\mm$-a.e.~in $X$.  
  \end{enumerate}
\end{definition}
The next result is a simple but important property of relaxed gradients \cite{AGS14I,AGS13,Savare22}.
\begin{theorem}\label{theo:mrgexists} If
    $f \in L^0(X,\mm)$  has a $(q,\AA)$-relaxed gradient then there exists a unique element of minimal $L^q(X,\mm)$-norm in 
    \begin{displaymath}
      S(f):=\Big\{G \in L^q(X,\mm):
      \text{$G$ is a $(q,\AA)$-relaxed gradient of $f$}\Big\}.
    \end{displaymath}
\end{theorem}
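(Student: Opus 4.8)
The plan is to realise $S(f)$ as a nonempty, convex and strongly closed subset of $L^q(X,\mm)$ and then to invoke the fact that, since $q\in(1,+\infty)$, $L^q(X,\mm)$ is reflexive and uniformly convex (Clarkson's inequalities), so that such a set contains exactly one element of least norm. By hypothesis $S(f)\neq\emptyset$. Note moreover that every $G\in S(f)$ dominates a weak $L^q$-limit $\tilde G$ of the nonnegative functions $\lip_\sfd f_n$, and, since the cone of $\mm$-a.e.\ nonnegative functions is convex and strongly closed, hence weakly closed, one gets $\tilde G\geq 0$ and therefore $G\geq 0$ $\mm$-a.e.; this positivity will be used repeatedly.

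\emph{Convexity.} Let $G_0,G_1\in S(f)$, with sequences $(f_n^i)_n\subset\AA$ and functions $\tilde G_i\leq G_i$ witnessing membership, and let $\theta\in[0,1]$. Since $\AA$ is in particular a linear subspace, $h_n:=(1-\theta)f_n^0+\theta f_n^1\in\AA$, and $h_n\to f$ in $\mm$-measure. Directly from \eqref{eq:1}, the functional $\lip_\sfd$ is positively $1$-homogeneous and subadditive, so $0\leq\lip_\sfd h_n\leq (1-\theta)\lip_\sfd f_n^0+\theta\,\lip_\sfd f_n^1$, and the right-hand side converges weakly in $L^q$ to $(1-\theta)\tilde G_0+\theta\tilde G_1$. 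In particular $(\lip_\sfd h_n)_n$ is bounded in $L^q$; along a subsequence $\lip_\sfd h_n\weakto\tilde G$, and passing to the weak limit in the inequality (again using that the nonnegative cone is weakly closed) gives $\tilde G\leq(1-\theta)\tilde G_0+\theta\tilde G_1\leq(1-\theta)G_0+\theta G_1$, so $(1-\theta)G_0+\theta G_1\in S(f)$.

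\emph{Strong closedness.} Let $G_k\in S(f)$ with $G_k\to G$ in $L^q$; fix $(f_n^k)_n\subset\AA$ with $f_n^k\to f$ in $\mm$-measure and $\lip_\sfd f_n^k\weakto H_k\leq G_k$, and, passing to a subsequence in $n$ (which does not affect the weak limit $H_k$), assume $f_n^k\to f$ $\mm$-a.e. Since $0\leq H_k\leq G_k$ and $(G_k)_k$ is $L^q$-bounded, up to a subsequence $H_k\weakto H$ with $H\leq G$. For each fixed $k$, Mazur's lemma applied to $(\lip_\sfd f_n^k)_n$ produces finite convex combinations $g_m^k:=\sum_{n\geq m}\beta_n^{(m,k)}f_n^k\in\AA$ such that $g_m^k\to f$ $\mm$-a.e., hence in $\mm$-measure, as $m\to\infty$, and $C_m^k:=\sum_{n\geq m}\beta_n^{(m,k)}\lip_\sfd f_n^k\to H_k$ strongly in $L^q$, while $0\leq\lip_\sfd g_m^k\leq C_m^k$. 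Choose $m_k$ so large that $g_{m_k}^k\to f$ in $\mm$-measure along $k$ and $\|C_{m_k}^k-H_k\|_{L^q}\leq 1/k$, and set $h_k:=g_{m_k}^k\in\AA$. Then $h_k\to f$ in $\mm$-measure; $C_{m_k}^k=(C_{m_k}^k-H_k)+H_k\weakto H$, so $(C_{m_k}^k)_k$ is bounded and $0\leq\lip_\sfd h_k\leq C_{m_k}^k$ yields boundedness of $(\lip_\sfd h_k)_k$ in $L^q$; along a subsequence $\lip_\sfd h_k\weakto\tilde G$, and from $\lip_\sfd h_k\leq C_{m_k}^k$ we get $\tilde G\leq H\leq G$. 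Hence $G$ is a $(q,\AA)$-relaxed gradient, i.e.\ $G\in S(f)$.

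\emph{Conclusion and main obstacle.} Since $L^q(X,\mm)$ is uniformly convex, a nonempty, convex, strongly closed subset has a unique element of minimal norm: if $\ell:=\inf\{\|G\|_{L^q}:G\in S(f)\}$ and $(G_j)_j\subset S(f)$ with $\|G_j\|_{L^q}\to\ell$, then $\tfrac12(G_i+G_j)\in S(f)$ forces $\|\tfrac12(G_i+G_j)\|_{L^q}\geq\ell$, and the modulus of convexity together with $\|G_i\|_{L^q},\|G_j\|_{L^q}\to\ell$ gives $\|G_i-G_j\|_{L^q}\to0$; the limit lies in $S(f)$ by strong closedness and has norm $\ell$, and the same estimate excludes a second minimiser. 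The delicate step is the strong closedness: from the double array $\{\lip_\sfd f_n^k\}_{n,k}$ one must extract a \emph{single} sequence in $\AA$ whose asymptotic Lipschitz constants converge weakly below $G$, and a naive diagonal extraction fails — the $L^q$-norms of $\lip_\sfd f_n^k$ need not be bounded uniformly in $k$, and weak convergence is not stable under diagonalisation. Interposing Mazur's lemma (to upgrade each weak convergence to a strong one at the level of convex combinations) and passing to $\mm$-a.e.\ convergent subsequences are precisely what repair the argument.
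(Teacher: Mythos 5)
Your proof is correct and follows the standard route that the paper implicitly relies on (and that the cited references AGS14I, AGS13, Savare22 carry out): realize $S(f)$ as a nonempty, convex, closed subset of $L^q$ and invoke the geometry of $L^q$ with $q\in(1,\infty)$. The paper's own Theorem~\ref{thm:omnibus}(1) records exactly the convexity and closedness of the set of pairs $(f,G)$ in the product (measure-convergence)\,$\times$\,(weak $L^q$) topology, which for a fixed $f$ gives weak $L^q$-closedness of $S(f)$; you instead establish strong $L^q$-closedness and then conclude by running a Cauchy argument via uniform convexity rather than extracting a weak limit via reflexivity and invoking weak lower semicontinuity of the norm plus strict convexity. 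Since a convex subset of a Banach space is strongly closed iff weakly closed, these are interchangeable. You are also right that the one genuinely delicate point in the closedness step is the double array $\{\lip_\sfd f_n^k\}_{n,k}$: the $L^q$-bounds on $\lip_\sfd f_n^k$ need not be uniform in $k$, so a naive diagonalisation does not give a bounded sequence in $L^q$, and passing to convex combinations through Mazur's lemma (which are still admissible because $\AA$ is an algebra, hence a vector space, and $\lip_\sfd$ is subadditive and positively homogeneous) is the correct repair. The remaining details (weak closedness of the nonnegative cone to preserve pointwise inequalities under weak limits, extracting $\mm$-a.e.\ convergent subsequences from convergence in $\mm$-measure over the finite measure $\mm$) are all in order.
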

Thanks to Theorem \ref{theo:mrgexists} the next definition is well posed.
\begin{definition}[Minimal relaxed gradient] Let $f \in L^0(X,\mm)$ be such that $f$ has a $(q,\AA)$-relaxed gradient. Its relaxed gradient with minimal $L^q(X,\mm)$-norm is denoted by $|\rmD f|_{\star,q,\AA}$ and called minimal $(q,\AA)$-relaxed gradient of $f$.
\end{definition}

\begin{definition}[Cheeger energy and Sobolev space]
   We call $D^{1,q}(X,\sfd,\mm;\AA)$ the set of functions in  $L^0(X,\mm)$ with
  a $(q,\AA)$-relaxed gradient and we define the $(q,\AA)$-Cheeger energy as
  \begin{equation}
    \label{eq:3}
    \CE_{q,\AA}(f):=\int_X |\rmD f|_{\star,q,\AA}^q(x)\,\d\mm(x)\quad
    \text{for every $f\in
  D^{1,q}(X,\sfd,\mm;\AA)$},
  \end{equation}
  with $\CE_{q,\AA}(f):=+\infty$ if $f \in L^0(X, \mm) \setminus D^{1,q}(X,\sfd,\mm;\AA).$
  The Sobolev space $H^{1,q}(X,\sfd,\mm;\AA)$ is defined as
  $L^q(X,\mm)\cap D^{1,q}(X,\sfd,\mm;\AA)$. The Sobolev norm of $f \in H^{1,q}(X,\sfd,\mm;\AA)$ is defined as
  \[ \|f\|_{H^{1,q}(X,\sfd,\mm;\AA)}^q:=\|f\|_{L^q(X, \mm)}^q+\CE_{q, \AA}(f).\]
\end{definition}

In the next theorem we collect a few properties of relaxed gradients and Sobolev spaces that will be useful (for a more comprehensive list and references to the proofs, see \cite{FSS22}).

\begin{theorem}
  \label{thm:omnibus}\ 
  \begin{enumerate}[\rm (1)]
  \item The set
    \begin{displaymath}
      S:=\Big\{(f,G)\in L^0(X,\mm)\times L^q(X,\mm):
      \text{$G$ is a $(q,\AA)$-relaxed gradient of $f$}\Big\}
    \end{displaymath}
    is convex and it is closed with respect to
    to the product topology of the convergence in $\mm$-measure and
    the weak convergence in $L^q(X,\mm)$.
    In particular, the restriction $S_r:=S\cap L^r(X,\mm)\times
    L^q(X,\mm)$ is weakly closed in
    $L^r(X,\mm)\times
    L^q(X,\mm)$ for every $r\in (1,+\infty)$.
  \item (Strong approximation) If
   $f \in D^{1,q}(X, \sfd,\mm; \AA)$ takes values in a closed
    (possibly unbounded) 
    interval $I\subset \R$ then
    there exists a sequence $f_n\in \AA$ with values in $I$
    such that
    \begin{equation}
      \label{eq:4}
      f_n \to f\text{ $\mm$-a.e.~in $X$},\quad
      \lip_\sfd f_n\to |\rmD f|_{\star,q,\AA}\text{ strongly in }L^q(X,\mm).
    \end{equation}
    If moreover $f\in L^r(X,\mm)$ for some $r\in [1,+\infty)$ then 
    we can also find a sequence as in \eqref{eq:4} converging strongly
    to $f$ in $L^r(X,\mm)$.
  \item (Pointwise minimality)
    If $G$ is a $(q,\AA)$-relaxed gradient of
    $f \in L^0(X,\mm)$  then $|\rmD
    f|_{\star,q,\AA}\le G$ $\mm$-a.e.~in $X$.
    \item (Truncations) If $f_j \in D^{1,q}(X, \sfd,\mm; \AA)$, $j=1,\cdots,J$,
      then
      also the functions $f_+:=\max(f_1,\cdots,f_J)$ and
      $f_-:=\min(f_1,\cdots,f_J)$ have $(q,\AA)$-relaxed gradient
      and
      \begin{align}
        \label{eq:9}
        |\rmD f_+|_{\star,q,\AA}=|\rmD f_j|_{\star,q,\AA}&\quad\text{$\mm$-a.e.~on }
                                                       \{x\in
                                                       X:f_+=f_j\},\\
        |\rmD f_-|_{\star,q,\AA}=|\rmD f_j|_{\star,q,\AA}&\quad\text{$\mm$-a.e.~on }
                                                       \{x\in X:f_-=f_j\}.
      \end{align}
      \item (Sobolev norm) $(H^{1,q}(X,\sfd,\mm;\AA),\|\cdot\|_{H^{1,q}(X,\sfd,\mm;\AA)}) $ is a Banach space.
  \end{enumerate}
\end{theorem}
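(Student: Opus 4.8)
The plan is to deduce all five assertions from a small toolkit: the sub\-additivity and positive $1$-homogeneity of the asymptotic Lipschitz constant and the chain-rule bound $\lip_\sfd(\phi\circ f)\le\|\phi'\|_{L^\infty(J)}\,\lip_\sfd f$ for $\phi\in C^1(\R)$ and $f$ valued in an interval $J$; the fact that $\AA$, being a unital algebra, is closed under composition with real polynomials and hence, via the Weierstrass theorem, \emph{approximately} closed under composition with $C^1$ maps on the (compact) range of any of its members, with asymptotic control on $\lip_\sfd$; and the reflexivity and uniform convexity of $L^q(X,\mm)$ for $q\in(1,+\infty)$. For \textbf{(1)}, convexity is direct: for $(f^i,G^i)\in S$ with witnesses $(f^i_n)\subset\AA$, $i=0,1$, the combination $\lambda f^0_n+(1-\lambda)f^1_n\in\AA$ converges to $\lambda f^0+(1-\lambda)f^1$ in $\mm$-measure and, being pointwise $\le\lambda\lip_\sfd f^0_n+(1-\lambda)\lip_\sfd f^1_n$, has $L^q$-bounded asymptotic Lipschitz constants; a weak limit thereof witnesses $\lambda f^0+(1-\lambda)f^1$ with dominating function $\le\lambda G^0+(1-\lambda)G^1$, since weak $L^q$-limits preserve $\mm$-a.e.\ inequalities. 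Closedness of $S$ in the product of the convergence-in-$\mm$-measure and weak-$L^q$ topologies is a diagonal argument: for $(f_k,G_k)\to(f,G)$ in $S$, replace each $G_k$ by the genuine weak limit of its witnessing $\lip_\sfd$-sequence (still a relaxed gradient, and weakly convergent along a subsequence to some $\tilde G\le G$), and then, using that convergence in $\mm$-measure and weak convergence on norm-bounded subsets of the separable space $L^q(X,\mm)$ are metrizable, together with Mazur's lemma to keep the extracted sequences bounded, produce a single $(h_j)\subset\AA$ with $h_j\to f$ in $\mm$-measure and $\lip_\sfd h_j$ weakly convergent to a function $\le\tilde G\le G$. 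The last assertion holds because $S_r$ is convex (a section of $S$) and strongly closed in $L^r\times L^q$ — $L^r$-norm convergence implying convergence in $\mm$-measure — hence weakly closed.

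\textbf{(2)} is the main point and, I expect, the only real difficulty. From a witness $(f_n)\subset\AA$ with $\lip_\sfd f_n\weakto\tilde G$ in $L^q$, Mazur's lemma provides convex combinations $g_m=\sum_j\alpha^m_j f_{n_j}\in\AA$ with $\sum_j\alpha^m_j\lip_\sfd f_{n_j}\to\tilde G$ \emph{strongly}; then $g_m\to f$ in $\mm$-measure, $\lip_\sfd g_m\le\sum_j\alpha^m_j\lip_\sfd f_{n_j}$, so $\limsup_m\|\lip_\sfd g_m\|_{L^q}\le\|\tilde G\|_{L^q}\le\||\rmD f|_{\star,q,\AA}\|_{L^q}$. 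A subsequential weak limit $H$ of $\lip_\sfd g_m$ is a relaxed gradient of $f$ by (1), hence $\||\rmD f|_{\star,q,\AA}\|_{L^q}\le\|H\|_{L^q}\le\liminf_m\|\lip_\sfd g_m\|_{L^q}\le\||\rmD f|_{\star,q,\AA}\|_{L^q}$; thus $H$ has minimal norm, so $H=|\rmD f|_{\star,q,\AA}$ by Theorem~\ref{theo:mrgexists}, and by uniform convexity of $L^q$ the convergence $\lip_\sfd g_m\to|\rmD f|_{\star,q,\AA}$ is strong, independently of the subsequence — this is \eqref{eq:4} with $I=\R$. To make the approximants $I$-valued I would compose $g_m$ with polynomials approximating, in $C^1$ on the range of $g_m$, mollified $C^1$ truncations onto slightly smaller closed subintervals of $I$, so that $P\circ g_m\in\AA$ lies in $I$, still tends to $f$ in $\mm$-measure along a diagonal, and has $\lip_\sfd(P\circ g_m)\le(1+o(1))\lip_\sfd g_m$; the norm estimate survives and the previous step upgrades to strong convergence. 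The extra $L^r$-convergence ($r\in[1,+\infty)$) is obtained by first replacing $f$ with its truncations $f_k$, which by (4) lie in $D^{1,q}$ with $|\rmD f_k|_{\star,q,\AA}=|\rmD f|_{\star,q,\AA}\,\mathbbm{1}_{\{|f|<k\}}$, hence converge to $f$ in $L^r$ and in $q$-energy; each $f_k$ being bounded, any $[-k,k]$-valued family of $\AA$-approximants tends to it in $L^r$ since $\mm$ is finite, and a diagonal argument concludes.

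For \textbf{(3)} and \textbf{(4)} — locality of the $(q,\AA)$-relaxed gradient — I would use the same composition-with-(polynomials-approximating-)$C^1$-maps device plus the chain and Leibniz rules for $\lip_\sfd$ (this uses only the case $I=\R$ of \eqref{eq:4} just obtained, so there is no circularity with the previous paragraph). Concretely, \eqref{eq:9} reduces, via $\max(f_1,f_2)=f_1+(f_2-f_1)^+$ and induction on $J$, to showing $g^+\in D^{1,q}$ with $|\rmD g^+|_{\star,q,\AA}=|\rmD g|_{\star,q,\AA}\,\mathbbm{1}_{\{g>0\}}$ for $g\in D^{1,q}$: approximate $t\mapsto t^+$ by smooth $\eta_\delta$ with $0\le\eta_\delta'\le1$ vanishing on $\{t\le0\}$, polynomially approximated on the ranges of the approximants of $g$, and identify the limit gradient by comparison with that of $g$ on $\{g>0\}$ and of the constant $0$ on $\{g<0\}$. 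Part~(3) then follows because $S(f)$ is a convex lattice — closed under $\wedge$ by a parallel argument — so that its minimal-$L^q$-norm element $|\rmD f|_{\star,q,\AA}$ is its $\mm$-essential infimum, whence $|\rmD f|_{\star,q,\AA}\le G$ $\mm$-a.e.\ for every $G\in S(f)$.

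Finally \textbf{(5)} is soft. Since the sum of two witnesses is again a witness, $|\rmD f|_{\star,q,\AA}+|\rmD g|_{\star,q,\AA}$ is a relaxed gradient of $f+g$, so by minimality of the norm $\CE_{q,\AA}(f+g)^{1/q}\le\||\rmD f|_{\star,q,\AA}\|_{L^q}+\||\rmD g|_{\star,q,\AA}\|_{L^q}$; combined with the $\ell^q$-triangle inequality in $\R^2$ this gives the triangle inequality for $\|\cdot\|_{H^{1,q}(X,\sfd,\mm;\AA)}$, the remaining norm axioms being obvious. For completeness, a $\|\cdot\|_{H^{1,q}}$-Cauchy sequence $(f_n)$ converges in $L^q$ — hence in $\mm$-measure — to some $f$; the gradients $|\rmD f_n|_{\star,q,\AA}$ being $L^q$-bounded admit a weak limit $H$ along a subsequence, and (1) gives $(f,H)\in S$, so $f\in D^{1,q}\cap L^q=H^{1,q}$; applying the closedness of $S$ to the differences $f_n-f_m$ with $n$ fixed and $m\to\infty$ — their gradients having $L^q$-norm at most $\|f_n-f_m\|_{H^{1,q}}\to0$ — yields a relaxed gradient of $f_n-f$ of arbitrarily small norm, whence $\CE_{q,\AA}(f_n-f)\to0$ and $f_n\to f$ in $H^{1,q}(X,\sfd,\mm;\AA)$.
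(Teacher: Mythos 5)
The paper gives no proof of Theorem~\ref{thm:omnibus}: it explicitly defers to \cite{FSS22} and the references therein (Ambrosio–Gigli–Savaré, Savaré's lecture notes), so there is no in-paper argument to compare against. Your sketches of (1), (2) and (5) are correct and follow the standard route from those references: weak closedness of the set of pairs, Mazur's lemma to upgrade weak $L^q$ convergence of $\lip_\sfd f_n$ to strong convergence of dominating convex combinations, uniform convexity of $L^q$ to turn ``weak limit with converging norm'' into strong convergence, and $C^1$-polynomial approximation on the (compact) ranges of $\AA$-members to stay inside $\AA$ and inside $I$.

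The gap is in (3)--(4), and it is a circularity rather than a missing computation. You derive (4) by reducing to the identity $|\rmD g^+|_{\star,q,\AA}=|\rmD g|_{\star,q,\AA}\,\mathbbm{1}_{\{g>0\}}$, but the composition device only yields that $\mathbbm{1}_{\{g>0\}}|\rmD g|_{\star,q,\AA}$ is \emph{some} relaxed gradient of $g^+$; promoting this membership to the pointwise a.e.\ inequality $|\rmD g^+|_{\star,q,\AA}\le\mathbbm{1}_{\{g>0\}}|\rmD g|_{\star,q,\AA}$ (and the reverse one $|\rmD g|_{\star,q,\AA}\le|\rmD g^+|_{\star,q,\AA}+|\rmD g^-|_{\star,q,\AA}$ pointwise, which you need for the other direction) is precisely pointwise minimality, i.e.\ (3). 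For (3) you assert that $S(f)$ is ``closed under $\wedge$ by a parallel argument'', but there is no parallel: $\wedge$-closedness concerns combining two relaxed gradients $G_1,G_2$ of the \emph{same} $f$, not composing a single $g$ with a $C^1$ map, and the standard proof rests on a Leibniz/cut-off lemma (for $\chi\in[0,1]$ Lipschitz one shows $\chi G_1+(1-\chi)G_2\in S(f)$ because $\lip_\sfd\big(\chi f_n^1+(1-\chi)f_n^2\big)\le\chi\lip_\sfd f_n^1+(1-\chi)\lip_\sfd f_n^2+|f_n^1-f_n^2|\lip_\sfd\chi$ and the last term vanishes; one then passes to indicators of $\{G_1<G_2\}$). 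In the $\AA$-adapted setting this step is delicate, since $\AA$ need not contain arbitrary Lipschitz cut-offs $\chi$; one must build $\chi$ as $C^1$-functions of elements of $\AA$, which is a genuine extra ingredient, not covered by the chain rule alone. Until that cut-off lemma is supplied and (3) is secured first, both (3) and (4) remain unproved.
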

Notice that, if $\AA=\Lip_b(X,\sfd)$, we will simply use the notations $|\rmD \cdot |_{\star, q}$, $D^{1,q}(X, \sfd, \mm)$, $\CE_{q}(\cdot)$, $H^{1,q}(X, \sfd, \mm)$ and $\|\cdot\|_{H^{1,q}(X, \sfd, \mm)}$ omitting the dependence on $\AA$.

\begin{remark}[Pre-Cheeger energy and its relaxation] \label{rem:relprec}
It is well known (see e.g.~\cite[Corollary 3.1.7]{Savare22}) that for every $r \in \{0\} \cup [1, +\infty)$ it holds that 
\begin{equation}\label{eq:relpreq}
  \CE_{q,\AA}(f) = \inf \left \{ \liminf_{n \to + \infty}
    \pCE_{q}(f_n) : f_n \in \AA, \, f_n \to f \text{ in } L^r(X,\mm)
  \right \},
    \quad f\in L^r(X,\mm),
\end{equation}
where the pre-Cheeger energy $\pCE_q: \Lipb(X, \sfd) \to [0, +\infty)$ is defined as
\begin{equation}\label{eq:prec} \pCE_{q}(f) := \int_X (\lip_\sfd f)^q \,\d \mm, \quad f \in \Lip_b(X,\sfd).
\end{equation}
\end{remark}

The main property we are interested in investigating is the density of the subalgebra $\AA$ in the metric Sobolev space.
\begin{definition}[Density in energy of a subalgebra of Lipschitz functions]
  \label{def:density}
  We say that $\AA\subset \Lip_b(X, \sfd)$ is \emph{dense in $q$-energy} in $D^{1,q}(X, \sfd, \mm)$ if for every $f\in D^{1,q}(X, \sfd, \mm)$ there exists a sequence $(f_n)_{n\in \N}$ satisfying
  \begin{equation}
    \label{eq:4bis}
    f_n\in \AA,\quad
    f_n \to f\text{ $\mm$-a.e.~in $X$},\quad
    \lip_\sfd f_n\to |\rmD f|_{\star,q}\text{ strongly in }L^q(X,\mm).
  \end{equation}
\end{definition}
\begin{remark}\label{rem:dense2} It is not difficult to see that Definition \ref{def:density} is equivalent to the equality $D^{1,q}(X,\sfd,\mm;\AA)=D^{1,q}(X,\sfd,\mm)$ with equal minimal relaxed gradients, the equality of the Sobolev spaces $H^{1,q}(X,\sfd,\mm;\AA)=H^{1,q}(X,\sfd,\mm)$ with equal norms and the following strong approximation property: for every $f \in H^{1,q}(X,\sfd,\mm)$ there exists a sequence $(f_n)_{n\in \N}$ satisfying
  \begin{equation}
    \label{eq:4tris}
    f_n\in \AA,\quad
    f_n \to f\text{ in $L^q(X,\mm)$},\quad
    \lip_\sfd f_n\to |\rmD f|_{\star,q}\text{ strongly in }L^q(X,\mm).
  \end{equation}
\end{remark}
The following characterization of the density of a subalgebra of Lipschitz and bounded functions is proven in \cite{FSS22}.
\begin{theorem}\label{theo:startingpoint}
Let $Y\subset X$ be a dense subset. Then
\begin{equation}\label{eq:214-15}
  \text{for every } y \in { Y}
  \text{ it holds}\quad
  \sfd_y\in D^{1,q}(X, \sfd, \mm; \AA),\quad
  \big|\rmD \sfd_y\big|_{\star,q,\AA}\le 1
\end{equation}
if and only if $\AA$ is dense in $q$-energy according to Definition
\ref{def:density}, where the function $\sfd_y : X \to [0, + \infty)$ is defined as 
\[ \sfd_y(x) := \sfd(x,y), \quad x \in X.\]
\end{theorem}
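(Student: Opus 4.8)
The plan is to prove the two implications, the substantial one being that the bound $|\rmD\sfd_y|_{\star,q,\AA}\le 1$ on the distance functions propagates to the density statement of Definition \ref{def:density}.

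For the implication ``density $\Rightarrow$ bound'' I would argue as follows. By Remark \ref{rem:dense2}, density in $q$-energy is equivalent to the identity $D^{1,q}(X,\sfd,\mm;\AA)=D^{1,q}(X,\sfd,\mm)$ with equal minimal relaxed gradients, so it is enough to show $\sfd_y\in D^{1,q}(X,\sfd,\mm)$ with $|\rmD\sfd_y|_{\star,q}\le 1$ for every $y\in X$. The truncations $\sfd_y\wedge k$ lie in $\Lip_b(X,\sfd)$ and are $1$-Lipschitz, hence $\lip_\sfd(\sfd_y\wedge k)\le 1$, so the constant $1\in L^q(X,\mm)$ (here $\mm$ is finite) is a $(q,\Lip_b)$-relaxed gradient of $\sfd_y\wedge k$; since $\sfd_y\wedge k\to\sfd_y$ in $\mm$-measure and the pairs $(\sfd_y\wedge k,1)$ belong to the set $S$ of Theorem \ref{thm:omnibus}(1), which is closed, one gets $(\sfd_y,1)\in S$.

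For the converse, assume \eqref{eq:214-15}. The core claim I would establish is that \emph{every} $f\in\Lip_b(X,\sfd)$ belongs to $D^{1,q}(X,\sfd,\mm;\AA)$ with $|\rmD f|_{\star,q,\AA}\le\lip_\sfd f$ $\mm$-a.e. First, since $y\mapsto\sfd_y$ is $1$-Lipschitz for the uniform norm and $S$ is closed, \eqref{eq:214-15} extends from the dense set $Y$ to every $y\in X$. By the scaling and translation invariance of the notion of $(q,\AA)$-relaxed gradient (using $\mathbbm 1\in\AA$), the building blocks $\lambda\,\sfd_y+c$ ($y\in X$, $\lambda>0$, $c\in\R$) all lie in $D^{1,q}(X,\sfd,\mm;\AA)$ with relaxed gradient $\le\lambda$; finite minima of such functions keep this bound by Theorem \ref{thm:omnibus}(4); and a monotone-limit argument — decreasing finite minima converge in $\mm$-measure, their gradients are bounded by $\lambda$ in $L^q$, and $S$ is closed — shows that any real-valued countable infimum of building blocks sharing the same $\lambda$ lies in $D^{1,q}(X,\sfd,\mm;\AA)$ with relaxed gradient $\le\lambda$. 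In particular, writing $f=\inf_{y\in D}(f(y)+L\,\sfd_y)$ for a countable dense $D\subset X$ and $L=\Lip(f,X,\sfd)$ (a McShane-type identity) already yields $f\in D^{1,q}(X,\sfd,\mm;\AA)$ with the crude bound $|\rmD f|_{\star,q,\AA}\le L$.

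The delicate point is upgrading this to the pointwise bound. Fix $\lambda>0$; using the definition of $\lip_\sfd f$ and the Lindelöf property of $X$, cover the set $\{\lip_\sfd f<\lambda\}$ by countably many open balls $B_i$ with $\Lip(f,B_i,\sfd)\le\lambda$. On each $B_i$ the localized inf-convolution $\hat f_i:=\inf_{y\in B_i\cap D}(f(y)+\lambda\,\sfd_y)$ is real-valued, coincides with $f$ on $B_i$ (here one uses $\Lip(f,B_i,\sfd)\le\lambda$), and by the previous step belongs to $D^{1,q}(X,\sfd,\mm;\AA)$ with $|\rmD\hat f_i|_{\star,q,\AA}\le\lambda$ $\mm$-a.e. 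Since $f$ and $\hat f_i$ agree on $B_i$, the locality of minimal relaxed gradients — obtained by applying Theorem \ref{thm:omnibus}(4) to $\max(f,\hat f_i)$ on $\{f=\hat f_i\}\supseteq B_i$ — gives $|\rmD f|_{\star,q,\AA}\le\lambda$ $\mm$-a.e.\ on $B_i$, hence $\mm$-a.e.\ on $\{\lip_\sfd f<\lambda\}$; letting $\lambda\downarrow\lip_\sfd f(x)$ along the rationals proves the claim. Finally, for arbitrary $f\in D^{1,q}(X,\sfd,\mm)$ I would pick, by the strong approximation property \eqref{eq:sap}, functions $f_n\in\Lip_b(X,\sfd)$ with $f_n\to f$ $\mm$-a.e.\ and $\lip_\sfd f_n\to|\rmD f|_{\star,q}$ in $L^q$; since $\lip_\sfd f_n$ is then a $(q,\AA)$-relaxed gradient of $f_n$, closedness of $S$ forces $|\rmD f|_{\star,q}$ to be a $(q,\AA)$-relaxed gradient of $f$, so $f\in D^{1,q}(X,\sfd,\mm;\AA)$ and $|\rmD f|_{\star,q,\AA}=|\rmD f|_{\star,q}$ by Theorem \ref{thm:omnibus}(3) together with the trivial reverse inequality. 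By Remark \ref{rem:dense2} — or directly by Theorem \ref{thm:omnibus}(2) applied to $\AA$ — this is exactly the density in $q$-energy. The main obstacle is precisely the localization step: passing from the global Lipschitz bound to the sharp pointwise bound $|\rmD f|_{\star,q,\AA}\le\lip_\sfd f$, which forces the combined use of McShane extensions, the stability of relaxed gradients under countable infima, and the locality property.
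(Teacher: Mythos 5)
The paper does not prove this theorem but cites it as \cite[Theorem 2.12]{FSS22}. Your argument is correct and, to the best of my knowledge, coincides with the proof given there (and in \cite[Chapter 5]{Savare22}, on which it is modeled): easy direction via truncations of $\sfd_y$ and closedness of the set $S$ of Theorem~\ref{thm:omnibus}(1); hard direction via McShane-type representation $f=\inf_{y\in D}(f(y)+L\,\sfd_y)$, the truncation and closedness properties of Theorem~\ref{thm:omnibus} to handle countable infima, localization on balls where $\Lip(f,\cdot,\sfd)<\lambda$ to upgrade the global Lipschitz bound to the pointwise bound $|\rmD f|_{\star,q,\AA}\le\lip_\sfd f$, and finally the strong approximation property for $\Lipb$ to conclude for general $f\in D^{1,q}(X,\sfd,\mm)$.
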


\section{Wasserstein spaces}
\label{sec:Wasserstein}
We devote the first part of this section to a few general properties of Wasserstein spaces (see \cite{Villani03, Villani09, santambrogio, AGS08} for a general review of Optimal Transport). The second part of the section will treat properties of Kantorovich potentials in particular geometric situations. We fix an exponent $p \in (1,+\infty)$ (recall that $p':=p/(p-1)$) and we remark that all vector spaces (and thus all Banach spaces) are \textit{real} vector spaces.\\
If $(X, \sfd)$ is a metric space, we denote by
$\prob(X)$ the space of Borel probability measures on $X$ and by
$\prob_\pd(X,\sfd)$
the set
\[ \prob_\pd(X, \sfd):= \left \{ \mu \in \prob(X) : \int_{X} \sfd^\pd(x,x_0) \d \mu(x) < + \infty \text{ for some (and hence for all) } x_0 \in X \right \}.\]
Given $\mu, \nu \in \prob(X)$ the set of transport plans between $\mu$ and $\nu$ is denoted by $\Gamma(\mu, \nu)$ and defined as
\[ \Gamma(\mu, \nu): = \left \{ \ggamma \in \prob(X \times X) : \pi^1_\sharp \ggamma=\mu, \, \pi^2_\sharp \ggamma=\nu \right \},\]
where $\pi^i(x_1,x_2)=x_i$ for every $(x_1,x_2) \in X \times X$ and $\sharp$ denotes the push forward operator as in \eqref{eq:push}. The $(p, \sfd)$-Wasserstein distance $W_{\pd,\sfd}$ between $\mu, \nu \in \prob_\pd(X, \sfd)$ is defined as
\[ W_{\pd, \sfd}^\pd(\mu, \nu) := \inf \left \{ \int_{X \times X} \sfd^\pd \,\d \ggamma : \ggamma \in \Gamma(\mu, \nu) \right \}.\]
It is well known that the infimum above is attained in a non-empty and
convex set $\Gamma_{o,p,\sfd}(\mu, \nu) \subset \Gamma(\mu, \nu)$ and that the $(p, \sfd)$-Wasserstein space $(\prob_\pd(X,\sfd), W_{\pd, \sfd})$ is complete and separable, if $(X, \sfd)$ is complete and separable. The Kantorovich duality for the Wasserstein distance states that
\begin{equation}\label{eq:duality}
    \frac{1}{p}W_{\pd, \sfd}^\pd(\mu, \nu) = \sup \left \{ \int_{X}\varphi\, \d \mu + \int_{X} \psi\, \d \nu : (\varphi, \psi) \in \text{Adm}_{\sfd^p/p}(X) \right \} \quad \text{ for every } \mu, \nu \in \prob_\pd(X, \sfd),
  \end{equation}
where $\text{Adm}_{\sfd^\pd/p}(X)$ is the set of pairs $(\varphi,\psi) \in \rmC_b(X) \times \rmC_b(X)$ such that 
\[  \varphi  (x) +  \psi  (y) \le \frac{1}{p}\sfd^\pd(x,y) \quad \text{ for every } x,y \in X.\]
It is easy to check that for every $f\in \Lip(X,\sfd)$ and $\mu, \nu \in \prob_p(X, \sfd)$ we have
\begin{equation}
  \label{eq:123}
  \int_X f\,\d(\mu-\nu)\le \Lip(f,X)W_{p, \sfd}(\mu,\nu);
\end{equation}
in fact, choosing $\ggamma\in \Gamma_{o,p, \sfd}(\mu,\nu)$ and setting $L:=\Lip(f,X)$, we have
\begin{align*}
  \int_X f\,\d(\mu-\nu)=
  \int (f(x)-f(y))\,\d\ggamma(x,y)\le
  L\int \sfd\,\d\ggamma\le
  L\Big(\int \sfd^p\,\d\ggamma\Big)^{1/p}=LW_p(\mu,\nu).
\end{align*}

\subsection{Estimates for Kantorovich
  potentials in \texorpdfstring{$(\R^d, \|\cdot\|)$}{R}}
\label{subsec:Kuseful}
In this subsection we study some stability properties for Kantorovich potentials in finite dimensional real Banach spaces for the cost induced by the $p$-th power of the norm.  We thus fix a dimension $d \in \N$ and a norm $\|\cdot\|$ on $\R^d$ such that 
\begin{equation}\label{eq:costf}
    \text{ the unit $\|\cdot\|$-ball } B_{\|\cdot\|}(0,1) := \{ x \in \R^d : \|x\|<1 \} \text{ is strictly convex and has $\rmC^{1,1}$ boundary}.
\end{equation}
We consider the map $h: \R^d \to [0, +\infty)$ defined as
\[ h(x):= \frac{1}{p}\|x\|^p, \,\, x \in \R^d, \]
and its Legendre transform $h^*:\R^d \to [0,+\infty)$ which is given by
\[ h^*(v) := \frac{1}{p'}\|v\|_*^{p'}, \quad v \in \R^d,\]
where $\|\cdot\|_*$ is the dual norm of $\|\cdot\|$. Since $h$ is strictly convex, $h^*$ is differentiable and we denote by $j_{p'}$ the gradient\footnote{Notice that in case $\|\cdot\|$ coincides with the Euclidean norm $|\cdot|$, $j_{p'}$ is simply given by 
\[ j_{p'}(v)= |v|^{p'-2}v, \quad v \in \R^d.\]} of $h^*$ which satisfies the equality
\begin{equation}\label{eq:dualityj}
\langle j_{p'}(v),v \rangle = \|v\|_*^{p'} = \|j_{p'}(v)\|^{p}, \quad \text{ for every } v \in \R^d,
\end{equation}
where $\langle \cdot, \cdot \rangle$ is the standard scalar product on $\R^d$.
The reason for restricting our analysis to norms satisfying condition \eqref{eq:costf} is that the cost function $c(x,y)$ induced by $h$ through the formula 
\begin{equation}\label{eq:inducedc}
c(x,y):= h(x-y), \quad x,y \in \R^d
\end{equation}
is compatible with the frameworks studied in \cite{Gangbo-McCann96,Gigli-Figalli} whose results we will often use. More specifically, the function $h$ as above, satisfies hypotheses (H1), (H2) and (H3) in \cite{Gangbo-McCann96}: while (H1) and (H3) are obvious, let us just mention that the smoothness of the unit $\|\cdot\|$-sphere gives that the unit $\|\cdot\|$-ball satisfies the so called $\eps$-ball condition for some $\eps>0$ (see e.g.~\cite[Definition 1.1, Theorem 1.9]{dalphin}) which in turn implies (H2). Also notice that, being the unit $\|\cdot\|$-sphere smooth, $h \in \rmC^1(\R^d)$ \cite[Proposition 13.14]{kriegl}.

We consider the complete and separable metric space $(\R^d, \sfd_{\|\cdot\|})$, where $\sfd_{\|\cdot\|}$ is the distance induced by $\|\cdot\|$, and the corresponding $(p, \sfd_{\|\cdot\|})$-Wasserstein space. To simplify the notation, in this subsection, we will simply write $\prob_p(\R^d)$, $W_p$ and $\Gamma_{o,p}$, omitting the dependence on $\sfd_{\|\cdot\|}$. Moreover we denote by $\prob_p^r(\R^d)$ the subset of $\prob_p(\R^d)$ of probability measures that are absolutely continuous w.r.t.~the $d$-dimensional Lebesgue measure $\mathcal{L}^d$ and we set
\begin{equation}
  \label{eq:183}
  \sqm\mu:=\int_{\R^d}\|x\|^p\,\d\mu(x)=W_p^p(\mu,\delta_0), \quad \mu \in \prob_p(\R^d).
\end{equation}
Finally let us set 
\[ B_{\|\cdot\|}(x,R):= \left \{ y \in \R^d : \|x-y\|< R \right \}, \quad x \in \R^d, \, R>0.\]

The next theorem uses the results of Gangbo and McCann \cite[Sections 3 and 4]{Gangbo-McCann96} and Figalli and Gigli \cite{Gigli-Figalli} to collect various useful properties of the optimal
potentials realizing the supremum in \eqref{eq:duality} when the support of one of the measures is a closed ball. This result plays the same role of \cite[Theorem 3.2]{FSS22} for the case $p=q=2$ and $\|\cdot\|$ equal to the Euclidean norm.

\begin{theorem} \label{thm:ot} Let $\mu, \nu \in \prob_p^r(\R^d)$
  with $\supp{\nu} = \overline{\rmB_{\|\cdot\|}(0,R)}$ for some $R>0$. Any pair of locally Lipschitz\footnote{Lipschitzianity is meant w.r.t.~any (equivalent) norm on $\R^d$.} functions $\phi \in L^1(\rmB_{\|\cdot\|}(0,R),\nu)$ and $\psi \in L^1(\R^d, \mu)$ such that
  \begin{enumerate}
      \item[(i)] $\displaystyle \phi(x)+ \psi(y) \le \frac{1}{p}\|x-y\|^p$ for every $(x,y) \in \rmB_{\|\cdot\|}(0,R) \times \R^d$,
      \item[(ii)] $\displaystyle \int_{\rmB(0,R)} \phi\, \d \nu + \int_{\R^d} \psi\, \d \mu =  \frac{1}{p}W_p^p(\mu, \nu)$,
  \end{enumerate}
  satisfies also
\begin{equation}
\displaystyle W_p^p(\mu, \nu) =\int_{\rmB_{\|\cdot\|}(0,R)} \left \|\nabla
  \phi \right \|_*^{p'} \d \nu=\int_{\R^d} \left \|\nabla
  \psi \right \|_*^{p'} \d \mu.\label{eq:158} 
\end{equation}
There exists a unique pair as above satisfying conditions (i), (ii) and the additional conditions
\begin{enumerate}
\item[(iii)]
  \begin{align}
    \label{eq:160}
    \displaystyle \phi(x) &= \inf_{y \in \R^d}  \left \{ \frac{1}{p}\|x-y\|^p - \psi(y)\right \}  &&\text{for every $x \in \rmB_{\|\cdot\|}(0,R)$,}\\ \label{eq:160bis}
    \displaystyle \psi(y) &= \inf_{x \in \rmB_{\|\cdot\|}(0,R)}  \left \{ \frac{1}{p}\|x-y\|^p - \phi(x)\right \}  &&\text{for every $y \in \R^d$,}
\end{align}
\item[(iv)] $\displaystyle \psi(0)=0$.
\end{enumerate}
Such a unique pair is denoted by $(\Phi(\nu, \mu), \Phi^*(\nu, \mu))$. Finally, the function $\psi:=\Phi^*(\nu, \mu)$ satisfies the following estimates: there exists a constant $K_{p,R}$, depending only on $p$ and $R$, such that
\begin{align}
\label{eq:lipestphi}
    \left |\psi(y')-\psi(y'') \right | &\le \|y'-y''\| 2^{p-1}(2R^{p-1} + \|y'\|^{p-1}+\|y''\|^{p-1}) \quad &&\text{ for every } y',y'' \in \R^d,\\
\label{eq:phiest1}
|\psi(y)| &\le K_{p,R}(1+\|y\|^{p}) \quad &&\text{ for every } y \in \R^d,\\    
\label{eq:phiest2}
\|\nabla \psi(y) \|_*  &\le K_{p,R} (1+\|y\|^{p-1}) \quad &&\text{ for $\mathcal{L}^d$-a.e. } y \in \R^d,\\ 
\label{eq:lowerboundphi}
\psi(y) &\ge \frac{1}{2p}\|y\|^p - K_{p,R} \quad &&\text{for every } y \in \R^d.
\end{align}
\end{theorem}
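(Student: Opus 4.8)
The plan is to deduce the identity \eqref{eq:158} from the first-order optimality conditions that hold $\gamma$-almost everywhere along an optimal plan $\gamma$, combined with the duality identity \eqref{eq:dualityj}; existence and uniqueness of the canonical pair then follow from the standard structure theory of optimal transport for costs of the form $h(x-y)$, and the four estimates on $\psi=\Phi^*(\nu,\mu)$ are obtained from the infimal representations (iii)--(iv) by elementary inequalities for $h$. \emph{For \eqref{eq:158}:} fix $\gamma\in\Gamma_{o,p}(\nu,\mu)$. By (i) the function $(x,y)\mapsto\tfrac1p\|x-y\|^p-\phi(x)-\psi(y)$ is nonnegative on $\rmB_{\|\cdot\|}(0,R)\times\R^d$, and by (ii) its $\gamma$-integral is $\tfrac1pW_p^p(\mu,\nu)-\tfrac1pW_p^p(\mu,\nu)=0$, so it vanishes $\gamma$-a.e. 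Since $\nu,\mu\ll\mathcal L^d$ and $\phi,\psi$ are locally Lipschitz (hence $\mathcal L^d$-a.e.\ differentiable by Rademacher), for $\gamma$-a.e.\ $(x,y)$ we also have $x\in\rmB_{\|\cdot\|}(0,R)$ (as $\nu$ does not charge $\partial\rmB_{\|\cdot\|}(0,R)$), $\phi$ differentiable at $x$, and $\psi$ differentiable at $y$. For such $(x,y)$, (i) says that $x'\mapsto\phi(x')-\tfrac1p\|x'-y\|^p$ has a maximum over the open ball at $x'=x$, so $\nabla\phi(x)=\nabla h(x-y)$, i.e.\ $x-y=j_{p'}(\nabla\phi(x))$; likewise $y'\mapsto\psi(y')-\tfrac1p\|x-y'\|^p$ has a maximum at $y'=y$, so $\nabla\psi(y)=-\nabla h(x-y)$, i.e.\ $x-y=j_{p'}(-\nabla\psi(y))$. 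By \eqref{eq:dualityj} this gives $\|x-y\|^p=\|\nabla\phi(x)\|_*^{p'}=\|\nabla\psi(y)\|_*^{p'}$ for $\gamma$-a.e.\ $(x,y)$, and integrating against $\gamma$ (using that the two right-hand sides depend only on $x$, resp.\ only on $y$) yields $W_p^p(\mu,\nu)=\int\|x-y\|^p\,\d\gamma=\int_{\rmB_{\|\cdot\|}(0,R)}\|\nabla\phi\|_*^{p'}\,\d\nu=\int_{\R^d}\|\nabla\psi\|_*^{p'}\,\d\mu$.

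\emph{Existence and uniqueness of $(\Phi,\Phi^*)$.} By the classical theory of optimal transport for the cost $c(x,y)=h(x-y)$ with $h$ as in \eqref{eq:costf} (see \cite{Gangbo-McCann96,Gigli-Figalli}), the supremum in \eqref{eq:duality} is attained by a locally Lipschitz $c$-concave potential $\phi_0$ on $\R^d$ together with its $c$-transform $\psi_0$. Setting $\widetilde\phi:=\phi_0|_{\rmB_{\|\cdot\|}(0,R)}$ and $\widetilde\psi(y):=\inf_{x\in\rmB_{\|\cdot\|}(0,R)}\{\tfrac1p\|x-y\|^p-\phi_0(x)\}$, one has $\widetilde\psi\ge\psi_0$ pointwise, so $(\widetilde\phi,\widetilde\psi)$ is still admissible and optimal and satisfies both identities in (iii); subtracting $\widetilde\psi(0)$ (finite, since $\phi_0$ is bounded on $\overline{\rmB_{\|\cdot\|}(0,R)}$) from $\widetilde\psi$ and adding it to $\widetilde\phi$ preserves (i)--(iii) and produces a pair $(\Phi(\nu,\mu),\Phi^*(\nu,\mu))$ satisfying (iv) with $\Phi$ bounded on $\overline{\rmB_{\|\cdot\|}(0,R)}$; the membership $\Phi^*\in L^1(\R^d,\mu)$ follows a posteriori from the estimates below. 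For uniqueness, if $(\phi^{(1)},\psi^{(1)})$ and $(\phi^{(2)},\psi^{(2)})$ both satisfy (i)--(iv), then both satisfy the equality $\gamma$-a.e.\ along the (unique, since $\nu\ll\mathcal L^d$) optimal plan $\gamma=(\mathrm{id},T)_\sharp\nu$, whence the computation above gives $\nabla\phi^{(1)}=\nabla h(\cdot-T(\cdot))=\nabla\phi^{(2)}$ $\nu$-a.e.; by the regularity of $c$-concave functions together with $\supp\nu=\overline{\rmB_{\|\cdot\|}(0,R)}$ this extends to $\mathcal L^d$-a.e.\ on the connected ball $\rmB_{\|\cdot\|}(0,R)$, so $\phi^{(1)}-\phi^{(2)}$ is a constant there, and then the second identity in (iii) forces $\psi^{(1)}-\psi^{(2)}$ to equal that constant on all of $\R^d$, which (iv) sets to zero.

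\emph{The estimates.} Write $\psi=\Phi^*(\nu,\mu)$, $\phi=\Phi(\nu,\mu)$. Taking $y=0$ in (i) and using (iv) gives $\phi(x)\le\tfrac1p\|x\|^p\le\tfrac1pR^p$ on $\rmB_{\|\cdot\|}(0,R)$; substituting into the second identity of (iii) and using $\|x-y\|^p\ge0$ yields $\psi(y)\ge-\tfrac1pR^p$ for every $y$, while for $\|y\|\ge c(p)R$ with $c(p)$ large enough the same bound and $\inf_{x\in\rmB_{\|\cdot\|}(0,R)}\tfrac1p\|x-y\|^p=\tfrac1p(\|y\|-R)^p\ge\tfrac1{2p}\|y\|^p$ give $\psi(y)\ge\tfrac1{2p}\|y\|^p-\tfrac1pR^p$; combining the two cases gives \eqref{eq:lowerboundphi}. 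For \eqref{eq:lipestphi}, the infimum in (iii) is attained at some $x^*(y)\in\overline{\rmB_{\|\cdot\|}(0,R)}$, so using $x^*(y'')$ as a competitor for $\psi(y')$ (the $\phi$-terms cancel) gives $\psi(y')-\psi(y'')\le h(x^*(y'')-y')-h(x^*(y'')-y'')$, and the elementary bound $|h(a)-h(b)|\le\max(\|a\|,\|b\|)^{p-1}\|a-b\|$ together with $\|x^*(y'')\|\le R$ and $(\alpha+\beta)^{p-1}\le 2^{p-1}(\alpha^{p-1}+\beta^{p-1})$ yields the claim after symmetrizing in $y',y''$. Then \eqref{eq:phiest1} follows by combining \eqref{eq:lowerboundphi} with the case $y''=0$ of \eqref{eq:lipestphi} together with $\psi(0)=0$ and $\|y\|\le 1+\|y\|^p$; and \eqref{eq:phiest2} follows from \eqref{eq:lipestphi} by letting $y'\to y$ along unit directions, since $\|\nabla\psi(y)\|_*$ is dominated by the local Lipschitz constant of $\psi$ near $y$.

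\emph{Main obstacle.} The crucial point is to make the first-order argument for \eqref{eq:158} valid for an \emph{arbitrary} admissible optimal pair, not only for the canonical one: this is exactly why the local Lipschitz hypothesis on $\phi,\psi$ (yielding $\nu$- and $\mu$-a.e.\ differentiability, via Rademacher and absolute continuity of the marginals) is indispensable. The remaining nontrivial input is the import from \cite{Gangbo-McCann96,Gigli-Figalli} of exactly the structural facts needed here — existence and local Lipschitz regularity of a $c$-concave maximizing pair for the cost $h(x-y)$, the first-order form $T(x)=x-j_{p'}(\nabla\phi(x))$ of the optimal map, and the fact that absolute continuity of $\nu$ forces the optimal plan to be induced by a transport map — everything else being elementary.
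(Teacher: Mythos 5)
Your treatment of the identity \eqref{eq:158} via first-order conditions $\gamma$-a.e.\ along the optimal plan is a valid and complete argument; the paper defers this to ``classical'' references in its Claim~1, so this part of your proof actually fills in a gap that the paper leaves. The estimates \eqref{eq:lipestphi}--\eqref{eq:lowerboundphi} are derived along essentially the same lines as the paper's Claim~6 (with minor differences in the choice of elementary inequality), and your uniqueness argument, while phrased via uniqueness of the optimal plan rather than via \cite[Theorem~4.4]{Gangbo-McCann96} as in the paper's Claim~5, rests on the same mechanism: two pairs yield the same gradient of the ball-side potential, hence differ by a constant, which is then killed by (iv).

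However, there is a genuine gap in your existence argument. You assert that ``the supremum in \eqref{eq:duality} is attained by a locally Lipschitz $c$-concave potential $\phi_0$ on $\R^d$.'' This is false in general: a $c$-concave function is locally Lipschitz only on the interior $\Omega$ of its effective domain $D(u)=\{u>-\infty\}$, and $\Omega$ can be a proper subset of $\R^d$; see \cite[Theorem~1]{Gigli-Figalli}. Even to get local Lipschitzianity on the open ball $\rmB_{\|\cdot\|}(0,R)$ you must first prove that this ball is contained in $\Omega$, and that is not automatic. This is precisely what the paper's Claim~3 does carefully: it invokes the Figalli--Gigli cone condition (their Step~2) to show $\intt{\overline{D(u)}}\subset\Omega$, and then uses the density of $\pi^1(\supp\gamma)$ in $\supp\nu=\overline{\rmB_{\|\cdot\|}(0,R)}$ together with finiteness of $u$ on $\pi^1(\supp\gamma)$ to conclude $\overline{\rmB_{\|\cdot\|}(0,R)}\subset\overline{D(u)}$, hence $\rmB_{\|\cdot\|}(0,R)\subset\Omega$; only then does Step~3 of \cite{Gigli-Failli} deliver local Lipschitz regularity on the ball. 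Without this, your claim ``$\phi_0$ is bounded on $\overline{\rmB_{\|\cdot\|}(0,R)}$'' (used to make $\widetilde\psi(0)$ finite) is also unsupported. A second, more minor omission: you assert that $(\widetilde\phi,\widetilde\psi)$ ``satisfies both identities in (iii),'' but only the identity for $\widetilde\psi$ is definitional; the first identity, $\phi=\inf_y\{h(\cdot-y)-\psi(y)\}$ on the ball, is a double $c$-transform statement for the \emph{restricted} cost (infimum over the ball in one direction, over $\R^d$ in the other) and needs the argument the paper carries out in Claim~4 via the modified cost $\tilde c$ and \cite[Prop.~5.8]{Villani09}.
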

\begin{proof} Let $\ggamma \in \Gamma_{o,p}(\nu, \mu)$ be fixed and let $c$ be as in \eqref{eq:inducedc}.
We divide the proof in several steps.

\emph{Claim 1}. Let  $\phi \in L^1(\rmB_{\|\cdot\|}(0,R), \nu)$ and $\psi \in L^1(\R^d, \mu)$ be locally Lipschitz functions satisfying (i) and (ii). Then the maps $\tt, \ss$ defined as
\begin{align*}
    \tt(x)&:= x - (j_{p'}\circ \nabla \phi)(x) , \quad \text{$\nu$-a.e.~$x \in \rmB_{\|\cdot\|}(0,R)$},\\
    \ss(y)&:= y - (j_{p'} \circ \nabla \psi)(y), \quad \text{$\mu$-a.e.~$y \in \R^d$},
\end{align*}
are Optimal Transport maps from $\nu$ to $\mu$ and from $\mu$ to $\nu$, respectively. In particular $\phi$ and $\psi$ satisfy \eqref{eq:158}.\\
\emph{Proof of claim 1}. The proof of this statement is classical and thus omitted (see e.g.~\cite[Theorem 2.12]{Villani09}, \cite[Theorem 1.17]{santambrogio} or \cite[Theorem 6.2.4]{AGS08}).

By the proof of \cite[Theorem 6.14]{AGS08} and \cite[Theorem 6.15]{AGS08}, we have that there exists a $c$-concave function $u \in L^1(\R^d, \nu; [-\infty, + \infty))$ such that $u$ is finite on $\pi^1(\supp{\ggamma})$, $u^c$ is finite on $\pi^2(\supp{\ggamma})$, $u^c \in L^1(\R^d, \mu; [-\infty, + \infty))$ and 
\[ \int_{\R^d} u \d \nu + \int_{\R^d} u^c \d \mu = \frac{1}{p} W_p^p(\mu, \nu),\]
where $u^c$ is the $c$-transform of $u$ defined as
\[ u^c(y):= \inf_{x \in \R^d} \left \{ c(x,y)-u(x) \right \}, \quad y \in \R^d,\]
and $\pi^i: \R^d \times \R^d\to \R^d$ is the projection $\pi^i(x_1,x_2)=x_i$ for $i=1,2$.
Recall that $c$-concavity of $u$ means that there exists some proper $v:\R^d \to [-\infty, + \infty)$ such that 
\[ u(x)= \inf_{z \in \R^d} \left \{ c(x,z)-v(z) \right \}, \quad x \in \R^d.\]
\emph{Claim 2}. If we define $\tilde{\psi}$ as 
\[ \tilde{\psi}(y):= \inf_{x \in \rmB_{\|\cdot\|}(0,R)} \left \{ c(x,y)-u(x) \right \}, \quad y \in \R^d,\]
then $\psi$ is real valued and locally Lipschitz, it is finite on $\pi^2(\supp{\ggamma})$, $\tilde{\psi} \in L^1(\R^d, \mu)$ and 
\begin{equation}\label{eq:gmc1}
\int_{\R^d} u \d \nu + \int_{\R^d} \tilde{\psi} \d \mu = \frac{1}{p} W_p^p(\nu, \mu).
\end{equation}
\quad \\
\emph{Proof of claim 2}.
Since $u(x)+u^c(y)=c(x,y)$ for every $(x,y) \in \supp{\ggamma}$, then for $\mu$-a.e.~$y \in \R^d$ there exists some $x \in \rmB_{\|\cdot\|}(0,R)$ such that $u(x)+u^c(y)=c(x,y)$. This gives that $u^c = \tilde{\psi}$ $\mu$-a.e.~in $\R^d$ and proves that $\tilde{\psi} \in L^1(\R^d, \mu)$ and \eqref{eq:gmc1}. Since $\tilde{\psi} \ge u^c$ everywhere, we also have that $\tilde{\psi}$ is finite in $\pi^2(\supp{\ggamma})$. Let us prove that $\tilde{\psi}$ is real valued. Since $v$ is proper, there exists some $z_0 \in \R^d$ such that $v(z_0) \in \R$. Then 
\[ u(x) \le c(x,z_0) -v(z_0) \quad \text{ for every }  x   \in \R^d\]
so that 
\[ \tilde{\psi}(y) \ge \inf_{ x \in \rmB_{\|\cdot\|}(0,R)} \left \{ c(x,y) - c(x,z_0) \right \} + v(z_0) > - \infty\]
since the map $x \mapsto c(x,y) - c(x,z_0)$ is bounded below in $\rmB_{\|\cdot\|}(0,R)$ for every fixed $y \in \R^d$. To prove that $\tilde{\psi}$ is locally Lipschitz, it is enough to observe that, for every $M>0$ and every $y',y'' \in \rmB_{\|\cdot\|}(0,M)$, we have
\[ \left |\tilde{\psi}(y')-\tilde{\psi}(y'') \right | \le \inf_{x \in \rmB_{\|\cdot\|}(0,R)} \left | c(x,y')-c(x,y'') \right | \le \omega_c(R,M) \|y'-y''\|, \]
where $\omega_c(R,M)$ is the uniform $\|\cdot\|$-modulus of continuity of $c$ on the compact set $\overline{\rmB_{\|\cdot\|}(0,R)} \times \overline{\rmB_{\|\cdot\|}(0,M)}$.

\emph{Claim 3}. If we define $\tilde{\varphi}$ as the restriction of $u$ to $\rmB_{\|\cdot\|}(0,R)$, then we have that $\tilde{\varphi}$ is real valued and locally Lipschitz. \\
\emph{Proof of claim 3}. Let us denote the effective domain of $u$ by 
\[D(u):= \{ x \in \R^d : u(x) > - \infty \}\]
and its interior by $\Omega$. Since $u$ is $c$-concave, by \cite[Step 2 in the proof of Theorem 1]{Gigli-Figalli}, we know that for every point $ x\in D(u) \setminus \Omega$ there exists some $v_x \in \R^d\setminus\{0\}$ and $\ell_x>0$ such that the interior of the set
\[ C_x:= \left \{ y \in \R^d : \text{ there exists } t \in [0, \ell_x] \text{ s.t. } \|x+tv_x-y\| \le t/2 \right \}\]
does not intersect $D(u)$. Thus we have that $\text{int}(\overline{D(u)}) \subset \Omega$. Moreover $u$ is locally Lipschitz in $\Omega$ by \cite[Step 3 in the proof of Theorem 1]{Gigli-Figalli}. It is then enough to prove that $\rmB_{\|\cdot\|}(0,R) \subset \Omega$. Since $\pi^1(\supp{\ggamma}) \subset \supp{\nu}$ is dense in $\supp{\nu}$ and $u$ is finite on $\pi^1(\supp{\ggamma})$, then
\[ \overline{\rmB_{\|\cdot\|}(0,R)} = \supp{\nu} \subset \overline{\pi^1(\supp{\ggamma})} \subset \overline{D(u)}.\]
Finally
\[ \Omega \supset \text{int} ( \overline {D(u)}) \supset \rmB_{\|\cdot\|}(0,R),\]
so that $\rmB_{\|\cdot\|}(0,R) \subset \Omega$.

\emph{Claim 4}. If we define $\varphi: \rmB_{\|\cdot\|}(0,R) \to \R$ and $\varphi^*: \R^d \to \R$ as
\begin{align*}
    \varphi(x) &:= \tilde{\varphi}(x) + \tilde{\psi}(0), \quad x \in \rmB_{\|\cdot\|}(0,R), \\
    \varphi^*(y) &:= \tilde{\psi}(y) - \tilde{\psi}(0), \quad y \in \R^d,
\end{align*}
then they are locally Lipschitz, $\varphi \in L^1(\rmB_{\|\cdot\|}(0,R)), \nu)$, $\varphi^*\in L^1(\R^d, \mu)$ and they satisfy points (i), (ii), (iii), (iv) of the statement.\\
\emph{Proof of claim 4}. 
The only nontrivial fact to be checked is \eqref{eq:160}. Let us define the modified cost function $\tilde{c}:\R^d \to [0, + \infty]$ as
\[ \tilde{c}(x,y) := \begin{cases} c(x,y) \quad &\text{ if } (x,y) \in \rmB_{\|\cdot\|}(0,R) \times \R^d, \\ + \infty \quad &\text{ else}\end{cases} \]
and the function $w: \R^d \to [-\infty, +\infty)$ as 
\[ w(z):= v(z) - \tilde{\psi}(0), \quad z \in \R^d.\]
It is thus clear that 
\begin{align*}
    w^{\tilde{c}}(x) &= \inf_{z \in \R^d} \left \{ \tilde{c}(x,z)-w(z) \right \} = \begin{cases} \varphi(x) \quad &\text{ if } x \in \rmB_{\|\cdot\|}(0,R), \\ + \infty \quad &\text{ else}, \end{cases} \\
    w^{\tilde{c} \tilde{c}}(y) &=\inf_{x \in \R^d} \left \{ \tilde{c}(x,y)-w^{\tilde{c}}(y) \right \} = \inf_{x \in \rmB_{\|\cdot\|}(0,R)} \left \{ c(x,y) - \varphi(x) \right \} = \varphi^*(y), \quad y \in \R^d,\\
    w^{\tilde{c}\tilde{c}\tilde{c}}(x) &= \inf_{y \in \R^d} \left \{ \tilde{c}(x,y) - w^{\tilde{c} \tilde{c}}(y) \right \} = \inf_{y \in \R^d} \left \{ \tilde{c}(x,y) - \varphi^*(y) \right \}, \quad x \in \R^d.
\end{align*}
By \cite[Prop. 5.8]{Villani09}, we have that $ w^{\tilde{c}}=w^{\tilde{c}\tilde{c}\tilde{c}}$ that reduces precisely to \eqref{eq:160} in case $x \in \rmB_{\|\cdot\|}(0,R)$.

\emph{Claim 5}. The pair $(\varphi, \varphi^*)$ is unique.\\
\emph{Proof of claim 5}. Suppose that $(\varphi_0, \varphi_0^*)$ is another pair of locally Lipschitz functions $\varphi_0 \in L^1(\rmB_{\|\cdot\|}(0,R), \nu)$, $\varphi^*_0\in L^1(\R^d, \mu)$ satisfying points (i), (ii), (iii) and (iv). By claim 1. the maps $\tt, \tt_0$ defined as
\begin{align*}
    \tt(x)&:= x - (j_{p'}\circ \nabla \varphi)(x), && \text{$\nu$-a.e.~$x \in \rmB_{\|\cdot\|}(0,R)$},\\
    \tt_0(x)&:= x - (j_{p'}\circ \nabla \varphi_0)(x), && \text{$\nu$-a.e.~$x \in \rmB_{\|\cdot\|}(0,R)$},
\end{align*}
are Optimal Transport maps from $\nu$ to $\mu$. Since $\varphi$ and $\varphi_0$ are (restrictions of) $c$-concave functions and both $\tt$ and $\tt_0$ push $\nu$ to $\mu$, by \cite[Theorem 4.4]{Gangbo-McCann96} we get that $\tt=\tt_0$ $\nu$-a.e.~in $\rmB_{\|\cdot\|}(0,R)$ and this gives in particular (recall that $h$ is everywhere differentiable) that $\nabla \varphi_0 = \nabla \varphi$ $\mathcal{L}^d$-a.e.~in $\rmB_{\|\cdot\|}(0,R)$. 
From this and point (iii) we get uniqueness.

\emph{Claim 6}. Let $\psi:=\Phi^*(\nu, \mu)$. Then there exists a constant $K_{p,R}$ depending only on $p$ and $R$ such that \eqref{eq:lipestphi}, \eqref{eq:phiest1}, \eqref{eq:phiest2} and \eqref{eq:lowerboundphi} hold.\\
\emph{Proof of claim 6}. \eqref{eq:lipestphi} is a consequence of the elementary inequality
\[ |a^p-b^p | \le p|a-b|(a^{p-1}+b^{p-1}), \quad a,b \ge 0,\]
and of the fact that 
\[ |\psi(y') - \psi(y'')| \le \frac{1}{p} \sup_{x \in \rmB_{\|\cdot\|}(0,R)} \left | \|x-y'\|^p - \|x-y''\|^p \right |.\]
\eqref{eq:phiest1}, \eqref{eq:phiest2} directly follow by \eqref{eq:lipestphi}. 
Finally, \eqref{eq:lowerboundphi} follows by the inequality
\begin{equation}\label{eq:ineqtt}
\|x-y\|^p \ge \frac{1}{2} \|y\|^p - K_p\|x\|^p, \quad x,y\in\R^d,
\end{equation}
that holds for a suitable constant $K_p>0$ that depends solely on $p$. In fact, using (iii), we have that
\[ \varphi(x) \le \frac{1}{p}\|x\|^p \quad \text{ for every } x \in \R^d\]
so that 
\[ \psi(y) \ge \inf_{x \in \rmB_{\|\cdot\|}(0,R)} \left \{ \frac{1}{p}\|x-y\|^p - \frac{1}{p} \|x\|^p \right \} \ge \frac{1}{2p}\|y\|^p - \frac{1}{p}(K_p+1)R^p \quad \text{ for every } y \in \R^d.\]
\end{proof}
\begin{remark} There exists a constant $D_{p,R}$, depending only on $p$ and $R$ such that, if $\psi:\R^d \to \R$ is a function satisfying \eqref{eq:lipestphi}, then
\begin{equation}\label{eq:w2lipphi}
\int_{ \R^d } \psi \d (\mu'-\mu'') \le D_{p,R} W_p(\mu', \mu'')(1+\rsqm{\mu'}+\rsqm{\mu''}) \quad \text{ for every } \mu', \mu'' \in \prob_p(\R^d).    
\end{equation}
In fact, by \eqref{eq:lipestphi}, if we take any $\ggamma \in \Gamma_{o,p}(\mu', \mu'')$, we have
\begin{align*}
    \int_{\R^d} \psi \d (\mu'-\mu'') &= \int_{\R^d \times \R^d} \left (\psi(y')-\psi(y'') \right ) \d \ggamma (y',y'') \\
    & \le 2^{p-1}\left ( \int_{R^d \times \R^d} \|y'-y''\|^p \d \ggamma(y',y'') \right )^{1/p} \\
    &\quad \left ( \int_{\R^d \times \R^d} \left (2R^{p-1} + \|y'\|^{p-1}+\|y''\|^{p-1} \right )^{p'} \d \ggamma(y',y'') \right )^{1/p'}
\end{align*}
so that \eqref{eq:w2lipphi} follows.
\end{remark}

In the following results we collect some properties of sequences of pairs of potentials as in Theorem \ref{thm:ot}. The aim is to prove that, under suitable conditions, such sequences converge to optimal potentials.

In the next Propositions we use the notation \[\dist(A,B) := \inf \{ \|x-y\| : x \in A, y \in B \}, \quad A,B \subset \R^d.\]

\begin{proposition}\label{prop:oldcl23} Let $R,I>0$ and let $(\varphi_n)_n, (\psi_n)_n$ be sequences of functions such that
\begin{align*}
\varphi_n(x)&= \inf_{y \in \R^d} \left \{ \frac{1}{p}\|x-y\|^p - \psi_n(y) \right \}, \quad &&\psi_n(y) = \inf_{x \in \rmB_{\|\cdot\|}(0,R)} \left \{ \frac{1}{p}\|x-y\|^p - \varphi_n(y) \right \}, \\
\psi_n(0)&=0, \quad &&\int_{\rmB_{\|\cdot\|}(0,R)} \varphi_n \, \d \mathcal{L}^d \ge - I
\end{align*}
for every $x \in \rmB_{\|\cdot\|}(0,R)$, every $y \in \R^d$ and every $n \in \N$. Then $\varphi_n$ is locally (w.r.t.~$x \in \rmB_{\|\cdot\|}(0,R)$) uniformly (w.r.t.~$n \in \N$) bounded and Lipschitz.
\end{proposition}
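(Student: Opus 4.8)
The plan is to extract from the stated $c$-concavity and normalization conditions enough quantitative information to control $\varphi_n$ both from above and from below, uniformly in $n$ and locally uniformly in $x$. First I would establish the upper bound: since $\varphi_n(x) = \inf_{y}\{\tfrac1p\|x-y\|^p - \psi_n(y)\}$, choosing $y=0$ and using $\psi_n(0)=0$ gives immediately $\varphi_n(x) \le \tfrac1p\|x\|^p \le \tfrac1p R^p$ for all $x \in \rmB_{\|\cdot\|}(0,R)$ and all $n$. For the Lipschitz estimate, I would argue exactly as in Claim 2 of the proof of Theorem \ref{thm:ot}: the function $\varphi_n$ is an infimum over $y$ of the functions $x \mapsto \tfrac1p\|x-y\|^p - \psi_n(y)$, and on the ball $\rmB_{\|\cdot\|}(0,R)$ the cost $x\mapsto \tfrac1p\|x-y\|^p$ has a $\|\cdot\|$-Lipschitz constant that can be bounded uniformly in $y$ once one restricts attention to the minimizing $y$'s (which lie in a bounded region, see below). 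Alternatively, and more cleanly, one can first note $\psi_n$ inherits the estimate \eqref{eq:lipestphi} — it is of the form $\inf_{x\in\rmB_{\|\cdot\|}(0,R)}\{\tfrac1p\|x-y\|^p - \varphi_n(x)\}$, so the computation behind \eqref{eq:lipestphi} applies verbatim and yields a growth bound $|\psi_n(y)| \le K_{p,R}(1+\|y\|^p)$ with $K_{p,R}$ independent of $n$ — and then feed this back into the formula for $\varphi_n$.

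The core of the argument is showing that in the infimum defining $\varphi_n(x)$ for $x \in \rmB_{\|\cdot\|}(0,R)$ only $y$ in a fixed bounded ball $\rmB_{\|\cdot\|}(0,\rho)$, with $\rho=\rho(p,R)$, are relevant. Indeed, from $\psi_n(y) \le K_{p,R}(1+\|y\|^p)$ we get $\tfrac1p\|x-y\|^p - \psi_n(y) \ge \tfrac1p\|x-y\|^p - K_{p,R}(1+\|y\|^p)$; this is not yet coercive in $y$, so the lower growth of $\psi_n$ must be used instead. Here is where the average condition $\int_{\rmB_{\|\cdot\|}(0,R)}\varphi_n\,\d\Leb d \ge -I$ enters: combined with the upper bound $\varphi_n \le \tfrac1p R^p$, it prevents $\varphi_n$ from being too negative on a large portion of the ball, and via the Lipschitz/$c$-concavity structure this forces a pointwise lower bound $\varphi_n(x) \ge -C(p,R,I)$ on, say, $\rmB_{\|\cdot\|}(0,R/2)$ (or on the whole ball with a constant depending also on the Lipschitz constant). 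Then from $\psi_n(y) = \inf_{x\in\rmB_{\|\cdot\|}(0,R)}\{\tfrac1p\|x-y\|^p - \varphi_n(x)\} \ge \tfrac1{2p}\|y\|^p - K_{p,R,I}$, exactly as in \eqref{eq:lowerboundphi}, one obtains genuine quadratic-type coercivity of $y \mapsto \tfrac1p\|x-y\|^p - \psi_n(y)$, so the infimum is attained at $y$ in a bounded set of radius depending only on $p,R,I$. Restricting the infimum to that bounded set makes $\varphi_n$ a finite infimum of uniformly Lipschitz functions, hence locally uniformly Lipschitz, and evaluating at any fixed competitor $y$ gives the lower bound on $\varphi_n$.

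The main obstacle I anticipate is the passage from the \emph{integral} lower bound $\int \varphi_n \ge -I$ to a \emph{pointwise} lower bound on a sub-ball: one needs to leverage the (not-yet-quantified) Lipschitz regularity of $\varphi_n$ to upgrade "the average is controlled" to "the infimum is controlled", and there is a mild circularity since the Lipschitz constant of $\varphi_n$ a priori depends on how negative $\psi_n$ can be, which depends on how negative $\varphi_n$ can be. I would break the circularity by first getting a \emph{crude} modulus of continuity for $\varphi_n$ that does not use any lower bound — using only that $\varphi_n$ is an infimum of functions $x\mapsto\tfrac1p\|x-y\|^p - \psi_n(y)$ restricted to $x\in\rmB_{\|\cdot\|}(0,R)$, whose oscillation on $\rmB_{\|\cdot\|}(0,R)$ is at most $\omega_c(R)$ (the modulus of continuity of the cost on the ball), independent of $y$ and $n$ — hence $\varphi_n$ has oscillation $\le \omega_c(R)$ on $\rmB_{\|\cdot\|}(0,R)$. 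Combined with $\int_{\rmB_{\|\cdot\|}(0,R)}\varphi_n\ge -I$ and $\Leb d(\rmB_{\|\cdot\|}(0,R))>0$, this immediately yields $\varphi_n \ge -I/\Leb d(\rmB_{\|\cdot\|}(0,R)) - \omega_c(R)$ everywhere on the ball, with no circularity, and the rest of the argument proceeds as above to refine the Lipschitz bound. This oscillation trick is, I expect, the one genuinely new small idea needed; everything else is a rerun of the estimates already appearing in the proof of Theorem \ref{thm:ot}.
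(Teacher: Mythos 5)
Your overall plan is sound up to the crucial step, and you correctly identified the circularity: a lower bound on $\varphi_n$ is needed to get coercivity of $y\mapsto \tfrac1p\|x-y\|^p - \psi_n(y)$, but that coercivity is what one would naively use to lower-bound $\varphi_n$. Unfortunately the ``oscillation trick'' you propose to break the circularity rests on a false claim. You assert that each function $x\mapsto \tfrac1p\|x-y\|^p - \psi_n(y)$ has oscillation on $\rmB_{\|\cdot\|}(0,R)$ bounded by a constant $\omega_c(R)$ independent of $y$. Since the constant $\psi_n(y)$ drops out of the oscillation, this amounts to bounding $\sup_{x,x'\in \rmB_{\|\cdot\|}(0,R)}\big|\|x-y\|^p - \|x'-y\|^p\big|$ uniformly in $y\in\R^d$, which fails for $p>1$: for $\|y\|$ large the gradient of $x\mapsto\|x-y\|^p$ on the ball is of order $\|y\|^{p-1}$, so the oscillation grows without bound in $y$. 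You may have been guided by Claim~2 in the proof of Theorem~\ref{thm:ot}, where such an oscillation bound does hold — but there the infimum defining $\tilde\psi$ is taken over $x$ in the \emph{compact} ball $\rmB_{\|\cdot\|}(0,R)$, so the modulus of continuity of the cost on $\overline{\rmB_{\|\cdot\|}(0,R)}\times\overline{\rmB_{\|\cdot\|}(0,M)}$ is finite. Here the infimum defining $\varphi_n$ runs over all $y\in\R^d$, which is exactly the structural difference that makes the argument nontrivial.

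The paper's proof replaces your oscillation step with the genuinely harder cone argument of Figalli--Gigli (cf.\ also \cite[Proposition C.3]{Gangbo-McCann96}): assuming by contradiction that $\varphi_n(x_n)\to-\infty$ at points $x_n\to\bar x\in\rmB_{\|\cdot\|}(0,R)$, one picks near-minimizers $y_n$, shows $\|x_n-y_n\|\to\infty$, and then — via the elementary inequality $(a-b)^p\le a^p-pa^{p-1}b$ applied along the segment from $x_n$ to $y_n$ — proves that $\varphi_n$ stays $\le\varphi_n(x_n)+1$ on a whole cone $C_n$ emanating from $x_n$. A truncated piece of this cone of definite Lebesgue measure sits inside $\rmB_{\|\cdot\|}(0,R)$ once $n$ is large, so integrating yields a contradiction with $\int_{\rmB_{\|\cdot\|}(0,R)}\varphi_n\,\d\Leb d\ge -I$. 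This is the ingredient your proposal is missing. Everything else in your sketch (the upper bound $\varphi_n\le R^p/p$, the uniform estimates on $\psi_n$ inherited from the infimum-over-compact structure, and the localization of the infimum once a lower bound is in hand, which is the paper's Claim~2) is on the right track.
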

\begin{proof} The proof is strongly based on \cite{Gigli-Figalli} (see also the similar approach in \cite[Proposition C.3]{Gangbo-McCann96}) and it is divided in two claims. 
For the whole proof, we extend $\varphi_n$ to $\R^d$ simply setting
\[ \varphi_n(x) = \inf_{y \in \R^d}  \left \{ \frac{1}{p}\|x-y\|^p-\psi_n(y) \right\} \ \text{for every } x \in \R^d.\]
\quad \\
\emph{Claim 1}. The sequence $\varphi_n$ is locally (w.r.t.~$x \in \rmB_{\|\cdot\|}(0,R)$) uniformly (w.r.t.~$n \in \N$) bounded.\\
\emph{Proof of claim 1}. Since $\varphi_n(x) \le \frac{1}{p}\|x\|^p$ for every $x \in \R^d$, it is enough to prove that $\varphi_n$ is locally uniformly bounded from below in $\rmB_{\|\cdot\|}(0,R)$. Let us suppose by contradiction that there exist some $\bar{x} \in \rmB_{\|\cdot\|}(0,R)$ and a sequence $(x_n)_n \subset \rmB_{\|\cdot\|}(0,R)$ such that $x_n \to \bar{x}$ and $\varphi_n(x_n) \to - \infty$. For every $n \in \N$ we can  choose  some $y_n \in \R^d$ such that
\[ \varphi_n(x_n) + 1 \ge \frac{1}{p}\|x_n-y_n\|^p - \psi_n(y_n) \ge - \psi_n(y_n)\]
implying that $\psi_n(y_n) \to + \infty$. Thus, since $\varphi_n(\bar{x}) \in \R$ and 
\[ \varphi_n(\bar{x}) \le \frac{1}{p}\|\bar{x}-y_n\|^p - \psi(y_n),\]
we get that $\|\bar{x}-y_n\| \to + \infty$ which in particular gives that $\|x_n-y_n\|\to + \infty$. Let us define now the curves $\gamma_n: [0,\|x_n-y_n\|]\to \R^d$ as
\[ \gamma_n(t)= \left (1- \frac{t}{\|x_n -y_n\|} \right ) x_n + \frac{t}{\|x_n -y_n\|} y_n, \quad t \in [0,\|x_n-y_n\|].\]
Since $\|x_n-y_n\| \to + \infty$, we can assume that $\|x_n-y_n\| \ge 1$ for every $n \in \N$ and define
\[ C_n := \left \{ x \in \R^d : \text{ there exists } t \in [0,1] \text{ such that } \|x-\gamma_n(t)\| \le \frac{t}{2} \right \}, \quad n \in \N.\]
We claim that $\sup_{C_n} \varphi_n \to -\infty$ as $n \to + \infty$. Indeed if $x \in \R^d$ and $t \in [0,1]$ is such that $\|x-\gamma_n(t)\| \le t/2$, we have
\begin{align}\begin{split}\label{eq:splitcos}
    \varphi_n(x) &\le \frac{1}{p}\|x-y\|^p - \psi_n(y_n) \\
    & \le \frac{1}{p} \left ( \|x-\gamma_n(t)\| + \|\gamma_n(t)-y_n\| \right )^p - \psi_n(y_n)\\
    & \le \frac{1}{p} \left ( t/2 + \|\gamma_n(t)-y_n\| \right )^p - \psi_n(y_n)\\
    &= \frac{1}{p} \left | \|x_n-y_n\| - t/2  \right |^p - \psi_n(y_n) \\
    & \le \frac{1}{p}\|x_n-y_n\|^p - \frac{t}{2}\|x_n-y_n\|^{p-1} - \psi_n(y_n) \\
    & \le \varphi_n(x_n) + \frac{1}{n} - \frac{t}{2} \|x_n-y_n\|^{p-1} \\
    & \le \varphi_n(x_n)+1,\end{split}
\end{align}
where we have used the elementary inequality
\[ (a-b)^p \le a^p -pa^{p-1}b^p \quad \text{ for every } 0 \le a \le b.\]
Letting $n \to + \infty$, we obtain the claim. Up to passing to a (unrelabeled) subsequence, we can thus assume that
\[ \varphi_n(x) \le -n \quad \text{for every } x \in C_n, \quad n \in \N.\]
Since $\bar{x} \in \rmB_{\|\cdot\|}(0,R)$ there exists some $\eps>0$ such that $\rmB_{\|\cdot\|}(\bar{x}, \eps) \subset \rmB_{\|\cdot\|}(0,R)$. Let $N \in \N$ be such that $\|x_n-\bar{x}\|< \eps/2$ for every $n \ge N$ and let $\delta:= \frac{\eps}{3}$; it is not difficult to check that the truncated sets
\[ C_n^{\delta} := \left \{ x \in \R^d : \text{ there exists } t \in [0,\delta] \text{ such that } \|x-\gamma_n(t)\| \le \frac{t}{2} \right \} \subset C_n, \quad n \in \N\]
are such that $C_n^\delta \subset \rmB_{\|\cdot\|}(\bar{x}, \eps) \subset \rmB_{\|\cdot\|}(0,R)$ for every $n \ge N$. Then for every $n \ge N$ we have
\begin{align*}
    -I-\frac{1}{p}\sqm{\mathcal{L}^d\mres\rmB_{\|\cdot\|}(0,R)} &\le \int_{\rmB_{\|\cdot\|}(0,R)} \left (\varphi_n(x) - \frac{1}{p}\|x\|^p \right ) \d \mathcal{L}^d(x)\\
    &\le \int_{C_n^\delta} \left (\varphi_n(x)-\frac{1}{p}\|x\|^p\right ) \d \mathcal{L}^d(x)\\
    & \le \int_{C_n^\delta} \left (-n-\frac{1}{p}\|x\|^p \right ) \d \mathcal{L}^d(x)\\
    & \le - n \mathcal{L}^d(C_n^\delta) \\
    & = - \frac{n \omega_{d-1}\delta^d}{d2^{d-1}\eta^{d-1}},
\end{align*}
where $\omega_{d-1}$ is the $\mathcal{L}^{d-1}$-measure of the Euclidean $(d-1)$-dimensional unit ball and $\eta>0$ is a constant such that
\[ \|x\| \le \eta |x| \quad \text{ for every } x \in \R^d,\]
where $|\cdot|$ is the Euclidean norm on $\R^d$.
Passing to the limit as $n \to + \infty$ gives that $I=\infty$, a contradiction.

\emph{Claim 2}. For every $K \subset \subset \rmB_{\|\cdot\|}(0,R)$ there exists $M_K>0$ such that 
\[ \varphi_n(x) = \inf_{y \in \rmB_{\|\cdot\|}(0, M_K)} \left \{ \frac{1}{p}\|x-y\|^p - \psi_n(y)\right \} \quad \text{for every $x \in K$ and for every $n \in \N$, }\]
so that the sequence $\varphi_n$ is locally (w.r.t.~$x \in \rmB_{\|\cdot\|}(0,R)$) uniformly (w.r.t.~$n \in \N$) Lipschitz.\\
\emph{Proof of claim 2}. Let $K \subset \subset \rmB_{\|\cdot\|}(0,R)$ be fixed; we claim that there exists $C_K>0$ such that, if $(x,y,n) \in K \times \R^d \times \N$ are such that
\[ \varphi_n(x) +1 \ge \frac{1}{p}\|x-y\|^p - \psi_n(y),
\]
then $y \in \overline{\rmB_{\|\cdot\|}(x,C_K)}$. Let us define
\[ C_K := \max \left \{ 1, \left (\frac{2}{\ell} \left (S_{K_\ell}-I_{K_\ell} +1 \right )\right)^{\frac{1}{p-1}} \right  \}, \]
where $0 <\ell < \dist(K, \rmB^c_{\|\cdot\|}(0,R))$ and
\begin{align*}
K_\ell &:= \left \{ x \in \R^d : \dist(x,K) \le \ell \right \} \subset \rmB_{\|\cdot\|}(0,R),\\
S_{K_\ell} &:= \sup \left \{ \varphi_n(x) : (x,n) \in K_\ell \times \N \right \} < + \infty,\\
I_{K_\ell} &:= \inf \left \{ \varphi_n(x) : (x,n) \in K_\ell \times \N \right \} > -\infty,
\end{align*}
where we have used claim 1 to ensure that the supremum and the infimum are finite. Let us consider $(x,y,n) \in K \times \R^d \times \N$ as above. To prove that $C_K$ is the sought constant, it is enough to consider the case in which $\|x-y\|\ge 1$. If we define $\gamma :[0, \|x-y\|]\to \R^d$ as the curve given by
\[ \gamma(t):= \left ( 1- \frac{t}{\|x-y\|} \right )x +  \frac{t}{\|x-y\|} y, \quad t \in [0, \|x-y\|],\]
we have, arguing as in \eqref{eq:splitcos}, that
\[ \varphi_n(\gamma(\ell)) \le \varphi_n(x) + 1 - \frac{\ell}{2} \|x-y\|^{p-1}\]
so that $\|x-y\| \le C_K$. We thus set 
\[ M_K := \sup \left \{\|y\| : y \in K+ \rmB_{\|\cdot\|}(0, C_K) \right \}.\]
If $x \in K$ and $n \in \N$ are fixed, we can find a minimizing sequence $(y_k)_k \subset \R^d$, in the sense that 
\[ \varphi_n(x) + 1 \ge \varphi_n(x_k) + \frac{1}{k} \ge \frac{1}{p} \|x-y_k\|^p-\psi_n(y_k) \quad \text{ for every } k \in \N\]
so that, by the result we have just proven, $(y_k)_k$ is bounded and thus converges, up to a (unrelabeled) subsequence to some $\bar{y} \in \overline{\rmB_{\|\cdot\|}(x, C_K)}$. Passing the above inequality to the limit as $k \to + \infty$, we get that 
\[ \varphi_n(x) \ge \frac{1}{p}\|x-\bar{y}\|^p - \psi_n(\bar{y}).\]
This proves that $\bar{y}$ is a minimizer and, since it belongs to $\rmB_{\|\cdot\|}(0, M_K)$, we get the second claim.
\end{proof}

\begin{proposition}\label{prop:oldcl4567} Let $R,I>0$ and let $(\varphi_n)_n, (\psi_n)_n$ be sequences of functions such that
\begin{align*}
\varphi_n(x)&= \inf_{y \in \R^d} \left \{ \frac{1}{p}\|x-y\|^p - \psi_n(y) \right \}, \quad &&\psi_n(y) = \inf_{x \in \rmB_{\|\cdot\|}(0,R)} \left \{ \frac{1}{p}\|x-y\|^p - \varphi_n(y) \right \}, \\
\psi_n(0)&=0, \quad &&\int_{\rmB_{\|\cdot\|}(0,R)} \varphi_n \, \d \mathcal{L}^d \ge - I
\end{align*}
for every $x \in \rmB_{\|\cdot\|}(0,R)$, every $y \in \R^d$ and every $n \in \N$. Then there exist a subsequence $j \mapsto n(j)$ and two locally Lipschitz functions $\varphi:\rmB_{\|\cdot\|}(0,R) \to \R$ and $\psi: \R^d \to \R$ such that $\varphi_{n(j)} \to \varphi$ locally uniformly in $\rmB_{\|\cdot\|}(0,R)$, $\psi_{n(j)} \to \psi$ locally uniformly in $\R^d$ and 
\begin{align}
\partial^c\psi(y) \ne \emptyset, \quad  \lim_j \sup \left \{ \dist(x, \partial^c \psi(y)) : x \in \partial^c \psi_{n(j)}(y) \right \} =0 \quad \text{ for every } y \in \R^d, \\ \label{eq:convgrad}
\nabla \psi_{n(j)} \to \nabla \psi \quad \text{ $\mathcal{L}^d$-a.e.~in $\R^d$},
\end{align}
where  $\partial^c \psi$ is the $c$-superdifferential operator of $\psi$, whose graph is given by
\begin{equation}\label{eq:csuperd}
 \partial^c \psi := \left \{ (y,x) \in \R^d \times \R^d : \psi(z) \le \psi(y) + \frac{1}{p}\|z-x\|^p - \frac{1}{p} \|y-x\|^p \text{ for every } z \in \R^d \right \}.
\end{equation}
\end{proposition}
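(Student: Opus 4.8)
The plan is to extract the subsequence in two stages, first for $(\varphi_n)_n$ and then for $(\psi_n)_n$, and then to transfer the resulting uniform convergences to the $c$-superdifferentials and to the gradients. Since the hypotheses here coincide with those of Proposition \ref{prop:oldcl23}, that result applies directly and gives that $(\varphi_n)_n$ is locally uniformly bounded and locally uniformly Lipschitz on $\rmB_{\|\cdot\|}(0,R)$; fixing an exhaustion of $\rmB_{\|\cdot\|}(0,R)$ by compact sets, the Arzelà--Ascoli theorem and a diagonal argument produce a subsequence and a locally Lipschitz $\varphi$ with $\varphi_n\to\varphi$ locally uniformly. For $(\psi_n)_n$ one first records uniform estimates: the normalization $\psi_n(0)=0$ together with the defining infimum forces $\varphi_n(x)\le\frac1p\|x\|^p\le\frac1p R^p$ on $\rmB_{\|\cdot\|}(0,R)$, whence, using the competitor $x=0$ for the upper bound and an elementary convexity inequality for the lower bound,
\[ \frac{1}{p\,2^{p-1}}\|y\|^p - C_{p,R} \ \le\ \psi_n(y) \ \le\ \frac1p\|y\|^p-\varphi_n(0), \qquad y\in\R^d, \]
for a constant $C_{p,R}$ depending only on $p$ and $R$; since $\varphi_n(0)$ is bounded by the previous step, $(\psi_n)_n$ is locally uniformly bounded and uniformly coercive. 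Moreover, for $y',y''\in\rmB_{\|\cdot\|}(0,M)$ the infimum formula for $\psi_n$ gives $|\psi_n(y')-\psi_n(y'')|\le\omega_c(R,M)\,\|y'-y''\|$, where $\omega_c(R,M)$ is the modulus of continuity of $c$ on $\overline{\rmB_{\|\cdot\|}(0,R)}\times\overline{\rmB_{\|\cdot\|}(0,M)}$, so $(\psi_n)_n$ is locally uniformly Lipschitz on $\R^d$; a second Arzelà--Ascoli and diagonal extraction along an exhaustion of $\R^d$ yields a subsequence $j\mapsto n(j)$ and a locally Lipschitz $\psi$ with $\psi_{n(j)}\to\psi$ locally uniformly (and $\varphi_{n(j)}\to\varphi$ locally uniformly).

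Next I would pass to the limit at the level of $c$-superdifferentials. Each $\psi_n$ is real-valued and $c$-concave --- indeed $\psi_n(y)=\inf_{x\in\R^d}\{\frac1p\|x-y\|^p-g_n(x)\}$ with $g_n:=\varphi_n$ on $\rmB_{\|\cdot\|}(0,R)$ and $g_n:=-\infty$ elsewhere --- so $\partial^c\psi_n(y)\ne\emptyset$ for every $y$; concretely, a minimizing sequence in the infimum defining $\psi_n(y)$ stays bounded because of the uniform coercive lower bound (compare Claim 2 in the proof of Proposition \ref{prop:oldcl23}) and subconverges to such a point. Now fix $y\in\R^d$ and let $x_j\in\partial^c\psi_{n(j)}(y)$ be arbitrary; taking $z=x_j$ in \eqref{eq:csuperd} gives $\psi_{n(j)}(x_j)\le\psi_{n(j)}(y)$, and the uniform lower bound on $\psi_{n(j)}$ forces $(x_j)_j$ to be bounded, uniformly over all such choices. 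For any subsequential limit $\bar x$ of $(x_j)_j$, passing to the limit in $\psi_{n(j)}(z)\le\psi_{n(j)}(y)+\frac1p\|z-x_j\|^p-\frac1p\|y-x_j\|^p$ for each fixed $z$, using the local uniform convergence $\psi_{n(j)}\to\psi$, shows $\bar x\in\partial^c\psi(y)$. In particular $\partial^c\psi(y)\ne\emptyset$, and a standard compactness-contradiction argument upgrades this to $\sup\{\dist(x,\partial^c\psi(y)):x\in\partial^c\psi_{n(j)}(y)\}\to0$.

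Finally, for the almost everywhere convergence of gradients: assumption \eqref{eq:costf} forces the dual norm $\|\cdot\|_*$ to be simultaneously strictly convex and $\rmC^1$, hence $h^*$ is strictly convex and $\rmC^1$ and $j_{p'}=\nabla h^*$ is a homeomorphism of $\R^d$ with continuous inverse $\nabla h$ (recall $h\in\rmC^1(\R^d)$). Let $Z\subset\R^d$ be the $\mathcal{L}^d$-negligible set off which $\psi$ and every $\psi_{n(j)}$ are differentiable. If $y\notin Z$ and $x\in\partial^c\psi_{n(j)}(y)$, then $z\mapsto\psi_{n(j)}(z)-\frac1p\|z-x\|^p$ attains a maximum at $z=y$; since $h\in\rmC^1$, the first-order condition reads $\nabla\psi_{n(j)}(y)=\nabla h(y-x)$, i.e.\ $x=y-j_{p'}(\nabla\psi_{n(j)}(y))$, so that $\partial^c\psi_{n(j)}(y)=\{y-j_{p'}(\nabla\psi_{n(j)}(y))\}$ and, likewise, $\partial^c\psi(y)=\{y-j_{p'}(\nabla\psi(y))\}$. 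The convergence of the $c$-superdifferentials then gives $j_{p'}(\nabla\psi_{n(j)}(y))\to j_{p'}(\nabla\psi(y))$, and applying the continuous map $\nabla h$ yields $\nabla\psi_{n(j)}(y)\to\nabla\psi(y)$, which is \eqref{eq:convgrad}.

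The main obstacle is the third step: one needs $\partial^c\psi_n(y)$ to be nonempty and, more importantly, uniformly bounded in $n$ for each fixed $y$, so that the limiting procedure is legitimate --- this is exactly where the normalization $\psi_n(0)=0$ and the integral bound $\int_{\rmB_{\|\cdot\|}(0,R)}\varphi_n\,\d\mathcal{L}^d\ge -I$ (via Proposition \ref{prop:oldcl23} and the resulting uniform coercive lower bound on $\psi_n$) enter decisively. Some additional care is required to ensure that the minimizers in the relevant infima do not escape to $\partial\rmB_{\|\cdot\|}(0,R)$ or to infinity, which is again controlled by these same estimates.
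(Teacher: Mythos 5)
Your proof is correct and follows the same overall structure as the paper's: extract a locally uniformly convergent subsequence via Arzel\`a--Ascoli, establish nonemptiness and uniform local boundedness of the $c$-superdifferentials $\partial^c\psi_n$, pass to the limit in the $c$-superdifferential inequality, and then convert the set convergence into pointwise convergence of gradients on a full-measure set.

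The one place where your route genuinely diverges is the uniform boundedness of $\partial^c\psi_{n(j)}(y)$ for fixed $y$. The paper (Claim~1 of its proof) establishes this via a rather elaborate contradiction argument, following Gangbo--McCann: it introduces a triple $(x,\lambda,u)$ satisfying two inequalities, constructs auxiliary points $z_k=u_k+(\xi_k-1)v_k$, and invokes the subdifferential of $h$ at $\xi_k v_k$ to derive a contradiction. Your argument is shorter and more direct: plugging $z=x$ into the $c$-superdifferential inequality gives $\psi_{n(j)}(x)+\frac1p\|y-x\|^p\le\psi_{n(j)}(y)$, and then the uniform coercive lower bound $\psi_n\ge c_p\|\cdot\|^p-C_{p,R}$ (which you derive from $\psi_n(0)=0$, hence $\varphi_n\le\frac1p\|\cdot\|^p$ on $\rmB_{\|\cdot\|}(0,R)$) together with the local uniform upper bound on $\psi_n(y)$ forces $\|x\|$ to stay bounded. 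This is a real simplification of the step and does the job; the paper's argument additionally controls the first component of $\partial^c\psi_n$ over a ball, but that extra information is not needed in the remaining claims. In Claim~3 you phrase the conclusion through the homeomorphism $j_{p'}=\nabla h^*$ (so $\partial^c\psi_n(y)=\{y-j_{p'}(\nabla\psi_n(y))\}$ at differentiability points, and continuity of $\nabla h=j_{p'}^{-1}$ converts convergence of the superdifferentials into convergence of the gradients), while the paper uses a local modulus of continuity of $\nabla h$ together with \cite[Lemma~3.1]{Gangbo-McCann96}; these are equivalent, and your sign convention $\nabla\psi(y)=\nabla h(y-x)$ is actually the consistent one for the definition \eqref{eq:csuperd} in the paper.

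One small inaccuracy in your presentation: for the nonemptiness of $\partial^c\psi_n(y)$ you invoke the coercive lower bound to argue that a minimizing sequence for $\psi_n(y)$ stays bounded, but that sequence is already constrained to lie in $\rmB_{\|\cdot\|}(0,R)$ by the infimum formula defining $\psi_n$, so no estimate is needed there. The coercivity is what you actually need for the uniform boundedness of $\partial^c\psi_n(y)$ in the next step, not here. This is a matter of exposition and does not affect the validity of the argument.
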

\begin{proof} The proof is divided in three claims, the first of which is adapted from \cite[Proposition C.4]{Gangbo-McCann96}.
By Proposition \ref{prop:oldcl23} we have that the sequence $\varphi_n$ is locally (w.r.t.~$x \in \rmB_{\|\cdot\|}(0,R)$) uniformly (w.r.t.~$n \in \N$) bounded and Lipschitz. Arguing as in Theorem \ref{thm:ot} (see in particular \eqref{eq:lipestphi} and
\eqref{eq:phiest1}), we have that the sequence $\psi_n$ is locally (w.r.t.~$y \in \R^d$) uniformly (w.r.t.~$n \in \N$) bounded and Lipschitz. Hence we can apply Ascoli-Arzelà theorem and obtain the existence of a subsequence $j \mapsto n(j)$ and two locally Lipschitz functions $\varphi:\rmB_{\|\cdot\|}(0,R)$ and $\psi: \R^d \to \R$ such that $\varphi_{n(j)} \to \varphi$ locally uniformly in $\rmB_{\|\cdot\|}(0,R)$ and $\psi_{n(j)} \to \psi$ locally uniformly in $\R^d$.

\emph{Claim 1}. For every $K>0$ there exists a constant $C_K>0$ such that
\[ \partial^c \psi_n( \overline{\rmB_{\|\cdot\|}(0,K)}) \subset \overline{\rmB_{\|\cdot\|}(0,C_K)} \times \overline{\rmB_{\|\cdot\|}(0,C_K)} \quad \text{ for every } n \in \N, \]
where $\partial^c \psi_n$ is the $c$-superdifferential operator of $\psi_n$ (which is defined analogously to $\partial^c \psi$ adapting \eqref{eq:csuperd} in the obvious way).
Moreover $\partial^c\psi_n(y) \ne \emptyset$ for every $y \in \R^d$ and every $n \in \N$.\\
\emph{Proof of claim 1}. Let us start from the last part of the claim; let $y \in \R^d$ and $n \in \N$ be fixed. By hypothesis, we can find a sequence $(x_j)_j \subset \rmB_{\|\cdot\|}(0,R)$ such that for every $j \in \N$ it holds
\begin{align*}
    \frac{1}{p}\|x_j-y\|^p - \varphi_n(x_j) -\frac{1}{j} &\le \psi_n(y), \\
    \psi_n(z) &\le \frac{1}{p}\|x_j-z\|^p - \varphi_n(x_j) \quad \text{ for every } z \in \R^d.
\end{align*}
This gives that 
\[ \psi_n(z) + \frac{1}{p}\|x_j-y\|^p - \frac{1}{j} \le \psi_n(y) + \frac{1}{p}\|x_j-z\|^p \quad \text{ for every } (z,j) \in \R^d \times \N.\]
Up to a (unrelabeled) subsequence, there exists some $x \in \R^d$ such that $x_j \to x$ and we get passing to the limit the above inequality that
\[ \psi_n(z) \le \psi_n(y) + \frac{1}{p}\|x-z\|^p - \frac{1}{p}\|x-y\|^p \quad \text{ for every } z \in \R^d,\]
meaning that $x \in \partial^c\psi_n(y)$. Let us come to the first part of the claim. Let $K>0$ be fixed and observe that, arguing as in Theorem \ref{thm:ot} (see in particular \eqref{eq:phiest1}), the sequence $\psi_n$ is locally (w.r.t.~$y \in \R^d$) uniformly (w.r.t.~$n \in \N$) bounded, so that there exists some $T_K>0$ such that
\begin{equation}\label{eq:uniboundt}
\sup_{n \in \N} \sup_{y \in \overline{\rmB_{\|\cdot\|}(0,K+\frac{1}{2})}} |\psi_n(y) | < T_K.
\end{equation}
We claim that there exists a constant $D_K$ such that, whenever $(x,\lambda,u,n) \in \R^d \times \R \times \overline{\rmB_{\|\cdot\|}(0,K)}\times \N$ are such that
\begin{align}\label{eq:gm1}
    \psi_n(z) &\le \frac{1}{p}\|z-x\|^p + \lambda \quad \text{ for every } z \in \R^d,\\
    \frac{1}{p}\|u-x\|^p + \lambda &<T_K, \label{eq:gm2}
\end{align}
then $\|x\|\le D_K$. Suppose by contradiction that we can find a sequence $(x_k,\lambda_k,u_k,n_k)_k \subset \R^d \times \R \times \overline{\rmB_{\|\cdot\|}(0,K)}\times \N$ satisfying \eqref{eq:gm1} and \eqref{eq:gm2} such that $\|x_k\| \to + \infty$. Let $v_k:=u_k-x_k$ for every $k \in \N$. Since $u_k \in \overline{\rmB_{\|\cdot\|}(0,K)}$, and $\|x_k\| \to + \infty$, we may assume that $\|v_k\|>1$ for every $k \in \N$. Let us also define $\xi_k:= 1- \frac{1}{2\|v_k\|}$ so that $\xi_k \to 1$ as $k \to + \infty$. Finally, let $z_k:= u_k + (\xi_k-1)v_k$, $k \in \N$ and observe that \eqref{eq:uniboundt} implies that $|\psi_{n_k}(z_k)| < T_K$ for every $k \in \N$ since
\[ \|z_k\| = \left \|u_k + (\xi_k-1)v_k \right \| = \left \|u_k + \frac{v_k}{2\|v_k\|} \right \| \le K + \frac{1}{2}.\]
We can thus evaluate \eqref{eq:gm1} and \eqref{eq:gm2} written for $(x_k,\lambda_k,u_k,n_k)$ with $z=z_k$ obtaining that
\[
    \psi_{n_k}(z_k) \le \frac{1}{p}\|z_k-x_k\|^p + \lambda_k, \quad \frac{1}{p}\|u_k-x_k\|^p + \lambda_k <T_K \quad \text{ for every } k \in \N.
\]
We thus deduce that 
\begin{equation}\label{eq:usefordiff}
\frac{1}{p} \|v_k\|^p - \frac{1}{p} \|\xi_k v_k\|^p = \frac{1}{p}\|v_k\|^p - \frac{1}{p}\|z_k-x_k\|^p \le 2T_K \quad \text{ for every } k \in \N.
\end{equation}
Let now $w_k$ be the unique element of the subdifferential of $h$ at the point $\xi_k v_k$ so that 
\begin{align}\label{eq:laprimdiff}
\frac{1}{p}\|0\|^p - \frac{1}{p}\|\xi_kv_k\|^p \ge \la w_k, 0-\xi_kv_k \ra \quad \text{ for every } k \in \N,\\
\frac{1}{p}\|v_k\|^p - \frac{1}{p}\|\xi_kv_k\|^p \ge \la w_k, v_k-\xi_kv_k \ra \quad \text{ for every } k \in \N. \label{eq:lasecdiff}
\end{align}
Combining \eqref{eq:usefordiff} with \eqref{eq:lasecdiff} we get
\begin{align*}
    2T_K &\ge \frac{1}{p}\|v_k\|^p - \frac{1}{p}\|\xi_kv_k\|^p \\
    &\ge \la w_k, v_k-\xi_kv_k \ra \\
    &= \frac{1}{2\|v_k\|}\la w_k , v_k \ra  \\
    &\ge \frac{1}{2p}\xi_k^{p-1} \|v_k\|^{p-1} \quad \text{ for every } k \in \N,
\end{align*}
where for the last inequality we have used \eqref{eq:laprimdiff}. Since $\xi_k \ge \frac{1}{2}$ and $\|v_k\| \to + \infty$, we get a contradiction, proving our first claim. We set $C_K:= \max\{K, D_K\}$ and we prove that $\partial^c \psi_n( \overline{\rmB_{\|\cdot\|}(0,K)}) \subset \overline{\rmB_{\|\cdot\|}(0,C_K)} \times \overline{\rmB_{\|\cdot\|}(0,C_K)}$ for every $n \in \N$. If $(y,x) \in \partial^c \psi_n(\overline{\rmB_{\|\cdot\|}(0,K)}$ and we define $\lambda:=\psi_n(y)-\frac{1}{p}\|x-y\|^p$, we have that $(x,\lambda, y, n) \in \R^d \times \R \times \overline{\rmB_{\|\cdot\|}(0,K)}\times \N$ and satisfies \eqref{eq:gm1} and \eqref{eq:gm2} by the very definition of $\partial^c \psi_n$ so that by the above claim we get that $\|x\|\le D_K \le C_K$ and of course $\|y\| \le K \le C_K$. This concludes the proof of the claim.

\emph{Claim 2}. We have that 
\[ \partial^c\psi(y) \ne \emptyset, \quad  \lim_j \sup \left \{ \dist(x, \partial^c \psi(y)) : x \in \partial^c \psi_{n(j)}(y) \right \} =0 \quad \text{ for every } y \in \R^d.\]
\emph{Proof of claim 2}. Let $y \in \R^d$ be fixed; observe that, if $(x_j)_j$ is any sequence such that $x_j \in \partial^c \psi_{n(j)}(y)$ for every $j \in \N$ (there exists at least one such a sequence by claim 1), then by claim 1 $\partial^c\psi_{n(j)}(y)$ are uniformly (w.r.t~$j \in \N$) bounded so that we can extract a subsequence $k\mapsto j_k$ such that $x_{j_k} \to x$ for some $x \in \R^d$. By the very definition of $c$-superdifferential, we have that for every $k \in \N$ it holds
\[ \psi_{n(j_k)}(z) \le \psi_{n(j_k)}(y) + \frac{1}{p}\|z-x_{j_k}\|^p - \frac{1}{p}\|y-x_{j_k}\|^p \quad \text{ for every } z \in \R^d.\]
Passing to the limit as $k \to + \infty$, we get that $x \in \partial^c \psi(y)$, so that $\partial^c \psi(y)\ne \emptyset$. This proves in particular that from any sequence of points $(x_j)_j$ such that $x_j \in \partial^c \psi_{n(j)}(y)$ for every $j \in \N$ we can extract a subsequence converging to an element of $\partial^c \psi(y)$. Let us come now to the proof of the limit. We prove that from any (unrelabeled) subsequence of $n(j)$ we can extract a further subsequence such that we have the convergence above. For every $j \in \N$ we can find some $x_j \in \partial^c \psi_{n(j)}(y)$ such that 
\[ \dist(x_j, \partial^c\psi(y)) + \frac{1}{j} \ge \sup \left \{ \dist(x, \partial^c \psi(y)) : x \in \partial^c \psi_{n(j)}(y) \right \}.\]
Reasoning as above, we find a subsequence $k \mapsto j_k$ such that $x_{j_k} \to x \in \partial^c\psi(y)$ as $k \to + \infty$. Then we have
\begin{align*}
0=\dist(x,\partial^c\psi(y))&= \lim_k \dist(x_{j_k}, \partial^c\psi(y)) + \frac{1}{j_k} \\
&\ge \limsup_k \sup \left \{ \dist(x, \partial^c \psi(y)) : x \in \partial^c \psi_{n(j_k)}(y) \right \}. 
\end{align*}
This concludes the proof of the claim.

\emph{Claim 3}. We have that $\nabla \psi_{n(j)} \to \nabla \psi$ $\mathcal{L}^d$-a.e.~in $\R^d$.\\
\emph{Proof of claim 3}. Let $A:= \left \{ y \in \R^d : \text{$\psi$ and $\psi_{n(j)}$ are differentiable at $y$ for every $j \in \N$} \right \}$; notice that $A$ has full $\mathcal{L}^d$ measure. Let $y \in A$; since $\psi$ is differentiable at $y$ we know by \cite[Lemma 3.1]{Gangbo-McCann96} that any $x \in \partial^c \psi(y)$ will satisfy $\|\nabla \psi(y)\|_*^{p'} =\|\nabla h(x-y)\|_*^{p'}=\|x-y\|^p$ so that $\partial^c \psi(y)$ is contained in $\overline{\rmB_{\|\cdot\|}(0, C)}$ for some $C>0$. On the other hand, claim 1 implies that there exists some constant $D>0$ such that $\partial^c \psi_{n(j)}(y) \subset \overline{\rmB_{\|\cdot\|}(0, D)}$ for every $j \in \N$. Let us define $M>0$ as the uniform modulus of continuity of the map $x \mapsto \nabla h(x-y)$ in the compact set $\overline{\rmB_{\|\cdot\|}(0, C+D)}$. By claim 2, for every $\eps>0$ we can find $J_\eps \in \N$ such that, if $j \ge J_\eps$, then 
\[ \dist(x, \partial^c \psi(y)) \le \sup \left \{ \dist(x, \partial^c \psi(y)) : x \in \partial^c \psi_{n(j)}(y) \right \} < \eps/M \quad \text{ for every } x \in \partial^c \psi_{n(j)}(y).\]
This means that for every $j \ge J_\eps$ and every $x_j \in \partial^c \psi_{n(j)}(y)$ there exists some $z_j \in \partial^c \psi(y)$ such that $\|x_j-z_j\| < \eps/M$. Using again \cite[Lemma 3.1]{Gangbo-McCann96} and the fact that both $\psi_{n(j)}$ and $\psi$ are differentiable at $y$, we have that $\nabla \psi_{n(j)}(y)= \nabla h(x_j-y) $ and $\nabla \psi(y)= \nabla h(z_j-y)$ for every $j \ge J_\eps$. Then
\[ \left | \nabla \psi_{n(j)}(y) - \nabla \psi(y) \right | = \left |\nabla h(x_j-y) - \nabla h(z_j-y) \right | \le M \|x_j-z_j\| < \eps\]
for every $j \ge J_\eps$. This concludes the proof of the claim.
\end{proof}

\section{The Wasserstein Sobolev space
  \texorpdfstring{$H^{1,q}(\prob_p(X, \sfd),W_{p, \sfd},\mm)$}{H}}
\label{sec:main2}
The aim of this section is to study the $q$-Sobolev space on the $(p, \sfd)$-Wasserstein space on a separable and complete metric space $(X,\sfd)$. In particular we will show at the end of this section that the algebra of cylinder functions generated by a sufficiently rich algebra of Lipschitz and bounded functions on $(X,\sfd)$ is dense in $q$-energy in the Sobolev space $H^{1,q}(\prob_p(X, \sfd),W_{p, \sfd},\mm)$. For the whole section $p,q \in (1,+\infty)$ are fixed exponents.

\subsection{Cylinder functions on \texorpdfstring{$(X, \sfd)$}{B} and their differential in Banach spaces}
\label{subsec:cylindrcial}
Let $(X,\sfd)$ be a complete and separable metric space. We extend the definition of cylinder function in \cite{FSS22} to this more general setting. To every $\phi \in \Lipb(X,\sfd)$ we can associate the functional $\lin\phi$ on $\prob(X)$
\begin{equation}\label{eq:monocyl}
  \lin\phi:\mu \to \int_{X} \phi \,\d \mu
\end{equation}
which belongs to $\Lip_b(\prob_p(X, \sfd),W_{p, \sfd})$ thanks to \eqref{eq:123}. If $\pphi=(\phi_1,\cdots,\phi_N)\in
\big(\Lipb(X,\sfd)\big)^N$, we denote by 
$\lin\pphi:=(\lin{\phi_1},\cdots,\lin{\phi_N})$ the corresponding map
from $\prob(X)$ to $\R^N$.

\begin{definition}[$\EE$-cylinder functions]\label{def:cyl} Let $\EE \subset \Lipb(X, \sfd)$ be an algebra of functions; we say that a function $F: \prob(X) \to \R$ is a $\EE$-\emph{cylinder function} if there exist $N \in \N$, $\psi \in \rmC_b^1(\R^N)$ and $\uphi= (\phi_1, \dots, \phi_N) \in \EE^N$ such that 
\begin{equation}\label{eq:cyl}
F(\mu) = \psi(\lin\pphi(\mu))=\psi \big( \lin{\phi_1}(\mu),\cdots,\lin{\phi_N}(\mu)\big)
\quad \text{for every } \mu \in \prob(X).
\end{equation}
We denote the set of such functions by $\ccyl{\prob(X)}{\EE}$.
\end{definition}

\begin{remark}
  Since for every $\uphi\in \EE^N$
  the range of $\lin\uphi$ is always contained in the
  bounded set
  $[-M,
  M]^N$ where $M:= \max_{i=1, \dots, d} \|\phi_i\|_\infty$,
  also functions $F=\psi\circ \lin\uphi$ 
  with $\psi \in \rmC^1(\R^N)$ belong
  to $\ccyl{\prob(X)}{\EE}$. Indeed it is enough to consider a function
  $\tilde{\psi} \in \rmC_b^1(\R^N)$ coinciding with $\psi$ on
   $[-M,
  M]^N$ and equal to $0$ outside $[-M-1, M+1]^N$
  so that $F=\tilde\psi\circ \lin\uphi$.
In particular
every function of the form $\lin\phi$, $\phi\in \EE$,
belongs to $\ccyl{\prob(X)}{\EE}$.

\end{remark}
We use the notation $\domG_X$ for the Borel set
\begin{equation}
  \label{eq:51}
  \domG_X:=\Big\{(\mu,x)\in \prob(X) \times X:x\in \supp(\mu)\Big\}.
\end{equation}
We introduce now the definition of differential of a cylinder function, still following \cite{FSS22}, in case $(X,\sfd)=(\B, \sfd_{\|\cdot\|})$, where $(\B, \|\cdot\|)$ is a Banach space and $\sfd_{\|\cdot\|}$ is the distance induced by $\|\cdot\|$. In this case we denote by $\rmC^1_b(\B)$ the space of bounded, continuously differentiable and Lipschitz functions on $\B$.
\begin{definition} Let $F\in \ccyl{\prob(\B)}{\rmC_b^1(\B)}$; then, given $N \in \N$, $\psi \in \rmC_b^1(\R^N)$ and $\uphi \in (\rmC_b^1(\B))^N$ such that $F=\psi\circ\lin\uphi$, we define the Wasserstein differential of $F$ conditioned to $(\psi, \uphi)$, 
$\rmD F_{\psi, \pphi}: \overline{\domG_\B}\to \B^*$, as
\begin{align}\label{eq:Fdiff}
\rmD F_{\psi, \pphi} (\mu,x) &:= \sum_{n=1}^N \partial_n \psi \left ( \lin \uphi(\mu)
                 \right ) \nabla \phi_n(x), \quad (\mu,x)\in \overline{\domG_\B}.
                \intertext{We will also denote by $\rmD F_{\psi, \pphi}[\mu]$ the
                function $x\mapsto \rmD F_{\psi, \pphi}(\mu,x)$ and we will set}
\| \rmD F_{\psi, \pphi}[\mu] \|_{*,p',\mu} &:= \left (\int_{\B} \|\rmD F_{\psi, \pphi}[\mu](x)\|_*^{p'} \d \mu(x) \right )^{1/p'}, \quad \mu \in \prob_p(\B, \sfd_{\|\cdot\|}),
\end{align}
where $\|\cdot\|_*$ is the dual norm to $\|\cdot\|$.
\end{definition}

\begin{remark} Let $\mu_0, \mu_1 \in \prob_p(\B, \sfd_{\|\cdot\|})$, let $\mmu \in \Gamma_{o,p, \sfd_{\|\cdot\|}}(\mu_0, \mu_1)$ and let $u \in L^p(\B, \mu_0; (\B, \|\cdot\|))$. We define the curves $\mu, \nu: [0,1] \to \prob_p(\B, \sfd_{\|\cdot\|})$ as
\begin{align}\label{eq:curves}
    \mu_t&:= \sfx^t_\sharp \mmu, \quad &&t \in [0,1], \\\label{eq:curves1}
    \nu_t&:= (\ii_\B+tu)_\sharp \mu_0, \quad &&t \in [0,1],
\end{align}
 where $\sfx^t: \B \times \B \to \B$ is the map defined as $\sfx^t(x_0,x_1):= (1-t)x_0+tx_1$, for every $(x_0, x_1) \in \B \times \B$ and $t \in [0,1]$. If $F\in \ccyl{\prob(\B)}{\rmC_b^1(\B)}$, given $N \in \N$, $\psi \in \rmC_b^1(\R^N)$ and $\uphi \in (\rmC_b^1(\B))^N$ such that $F=\psi\circ\lin\uphi$, then 
 \begin{align}
 \label{eq:derivative}
      &\lim_{[0,1] \ni t \to s} \frac{F(\mu_t)-F(\mu_s)}{t-s} = \int_{\B \times \B} \la \rmD F_{\psi, \pphi}(\mu_s, \sfx^s(x_0,x_1)), x_1-x_0 \ra \d \mmu(x_0,x_1), \\ \label{eq:derivative1}
      &\lim_{[0,1] \ni t \to s} \frac{F(\nu_t)-F(\nu_s)}{t-s} = \int_{\B} \la \rmD F_{\psi, \pphi}(\nu_s, x), u(x) \ra \d \nu_s(x),
 \end{align}
for every $s \in [0,1]$. This is a simple consequence of the chain rule and the regularity of $\pphi$.
\end{remark}

\begin{remark}
  \label{rem:propDF}
   It is not difficult to check that
  \begin{equation}
    \label{eq:124}
    \rmD F_{\psi, \pphi}\text{ is continuous in }\prob(\B)\times \B
  \end{equation}
  with respect to the natural product (narrow and norm) topology.
  
  In principle
  $\rmD F_{\psi, \pphi}$ may depend on the choice of $N \in
  \N$, $\psi \in \rmC_b^1(\R^N)$ and $\uphi \in (\rmC_b^1(\B))^N$
  used to represent $F$. In Proposition \ref{prop:equality} we show
  that for every $\mu\in \prob_p(\B, \sfd_{\|\cdot\|})$ 
  the function $\rmD F_{\psi, \pphi}[\mu]$ is uniquely characterized in $\supp(\mu)$ so that $\rmD F_{\psi, \pphi}$ is
  uniquely characterized by $F$ in $\domG^p_\B:= \domG_\B \cap (\prob_p(\B, \sfd_{\|\cdot\|}) \times \B)$. We will be then able to remove the subscript in $\rmD F_{\psi, \pphi}$ and denote it simply by $\rmD_p F$.
\end{remark}

The following Lemma is proved in \cite{FSS22} and will be useful in the proof of Proposition \ref{prop:equality}.
\begin{lemma}\label{lem:limit}
   Let $Y$ be a Polish space
  and let $G:\prob(Y) \times Y \to [0, + \infty)$ be a bounded and
  continuous function. If
  $(\mu_n)_{n\in \N}$ is a sequence in $\prob(Y)$ narrowly converging (i.e.~converging in duality with continuous and bounded functions on $Y$)
  to $\mu$ as $n \to + \infty$, then
\[ \lim_{n\to\infty} \int_Y G(\mu_n, y) \d \mu_n(y) = \int_Y G(\mu, y) \d \mu(y).\]
\end{lemma}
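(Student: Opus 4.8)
The plan is to split the difference into a term controlled directly by narrow convergence and a term that requires a tightness argument. Writing
\[
\int_Y G(\mu_n,y)\,\d\mu_n(y)-\int_Y G(\mu,y)\,\d\mu(y)
= \int_Y\big(G(\mu_n,y)-G(\mu,y)\big)\,\d\mu_n(y)
+ \Big(\int_Y G(\mu,y)\,\d\mu_n(y)-\int_Y G(\mu,y)\,\d\mu(y)\Big),
\]
the second summand tends to $0$ because $y\mapsto G(\mu,y)$ is bounded and continuous on $Y$ and $\mu_n\to\mu$ narrowly. Hence the whole work is to show that the first summand $A_n:=\int_Y\big(G(\mu_n,y)-G(\mu,y)\big)\,\d\mu_n(y)$ vanishes in the limit.

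To treat $A_n$ I would first invoke tightness: since $(\mu_n)_n$ is narrowly convergent in the Polish space $Y$, the family $\{\mu_n:n\in\N\}\cup\{\mu\}$ is relatively compact in $\prob(Y)$, hence tight by Prokhorov's theorem. Thus for every $\eps>0$ there is a compact $K=K_\eps\subset Y$ with $\sup_n\mu_n(Y\setminus K)\le\eps$ and $\mu(Y\setminus K)\le\eps$. The key intermediate claim is then the uniform-on-$K$ convergence $\sup_{y\in K}\big|G(\mu_n,y)-G(\mu,y)\big|\to0$ as $n\to\infty$. I would prove this by contradiction: if it failed, there would be $\eps_0>0$, a subsequence $(n_k)_k$, and points $y_k\in K$ with $\big|G(\mu_{n_k},y_k)-G(\mu,y_k)\big|\ge\eps_0$; extracting a convergent subsequence $y_k\to y_\star\in K$ by compactness of $K$, the pair $(\mu_{n_k},y_k)$ converges to $(\mu,y_\star)$ in $\prob(Y)\times Y$, so the joint continuity of $G$ forces $G(\mu_{n_k},y_k)\to G(\mu,y_\star)$, while continuity of $G(\mu,\cdot)$ forces $G(\mu,y_k)\to G(\mu,y_\star)$ — a contradiction.

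Granting the claim, I would finish with the bound
\[
|A_n|\le \sup_{y\in K}\big|G(\mu_n,y)-G(\mu,y)\big| + 2\,\big(\sup G\big)\,\mu_n(Y\setminus K)
\le \sup_{y\in K}\big|G(\mu_n,y)-G(\mu,y)\big| + 2\,\big(\sup G\big)\,\eps ,
\]
which gives $\limsup_{n\to\infty}|A_n|\le 2\,(\sup G)\,\eps$; since $\eps>0$ is arbitrary, $A_n\to0$, and combining the two summands proves the lemma. I expect the uniform-on-compacts step to be the only genuine obstacle: it is precisely where both the tightness of $(\mu_n)_n$ and the \emph{joint} (rather than separate) continuity of $G$ enter in an essential way. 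An alternative, essentially equivalent, route is to observe that $G(\mu_n,\cdot)\to G(\mu,\cdot)$ in the sense of continuous convergence and to apply a general stability lemma for integration against narrowly convergent measures, but the self-contained argument above seems cleanest.
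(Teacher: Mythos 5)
Your proof is correct and complete. Note that the paper does not reprove this lemma---it is imported by reference to \cite{FSS22}---so there is no in-paper argument to compare against; the route you take (split off the term $\int G(\mu,\cdot)\,\d(\mu_n-\mu)$ handled directly by narrow convergence, use Prokhorov's theorem to get a compact $K$ with $\sup_n\mu_n(Y\setminus K)\le\eps$, prove $\sup_{y\in K}|G(\mu_n,y)-G(\mu,y)|\to 0$ by a compactness/contradiction argument exploiting the \emph{joint} continuity of $G$ and the metrizability of the narrow topology on $\prob(Y)$, then assemble the estimate $|A_n|\le\sup_K|G(\mu_n,\cdot)-G(\mu,\cdot)|+2(\sup G)\,\eps$) is the standard and natural one, and every step is sound.
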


The following proposition corresponds to \cite[Proposition 4.9]{FSS22} and the proof is quite similar but, because of a few differences, we still report it here.
\begin{proposition} \label{prop:equality} Let $(\B, \|\cdot\|)$ be a separable Banach space and let $F\in \ccyl{\prob(\B)}{\rmC_b^1(\B)}$; then, if $N \in \N$, $\psi \in \rmC_b^1(\R^N)$ and $\uphi \in (\rmC_b^1(\B))^N$ are such that $F=\psi\circ\lin\uphi$, we have
\[ \|\rmD F_{\psi, \pphi}[\mu]\|_{*, p', \mu} = \lip_{W_{p, \sfd_{\|\cdot\|}}} F (\mu) \quad \text{ for every } \mu \in \prob_p(\B, \sfd_{\|\cdot\|}).\]
In particular, $\|\rmD F_{\psi, \pphi}[\mu]\|_{*, p', \mu}$ does not depend on the choice of the
representation of $F$
and $\rmD F_{\psi, \pphi}$ just depends on $F$ on $\overline {\domG^p_\B}$ (cf.~Remark \ref{rem:propDF}).
\end{proposition}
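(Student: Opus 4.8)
\emph{Plan.} Fix a representation $F=\psi\circ\lin\pphi$ with $\psi\in\rmC^1_b(\R^N)$ and $\pphi\in(\rmC^1_b(\B))^N$, and write $g(\nu,x):=\rmD F_{\psi,\pphi}(\nu,x)$. I would prove $\lip_{W_{p,\sfd_{\|\cdot\|}}}F(\mu)=\|\rmD F_{\psi,\pphi}[\mu]\|_{*,p',\mu}$ by establishing both inequalities. Two preliminary facts are used throughout. First, $g$ is bounded on $\prob(\B)\times\B$ (because $\psi\in\rmC^1_b$ and $\|\nabla\phi_n(x)\|_*\le\Lip(\phi_n,\B)$) and continuous there by \eqref{eq:124}; hence $(\nu,x)\mapsto\|g(\nu,x)\|_*^{p'}$ is bounded and continuous, and Lemma~\ref{lem:limit} yields that $\nu\mapsto\|\rmD F_{\psi,\pphi}[\nu]\|_{*,p',\nu}^{p'}=\int_\B\|g(\nu,x)\|_*^{p'}\,\d\nu(x)$ is continuous for narrow convergence, hence also for $W_{p,\sfd_{\|\cdot\|}}$. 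Second, $F$ is globally Lipschitz on $(\prob_p(\B,\sfd_{\|\cdot\|}),W_{p,\sfd_{\|\cdot\|}})$ by \eqref{eq:123} and $\psi\in\rmC^1_b$.

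\emph{Upper bound.} Let $\mu',\mu''\in\prob_p(\B,\sfd_{\|\cdot\|})$ with $\mu'\neq\mu''$, choose $\mmu\in\Gamma_{o,p,\sfd_{\|\cdot\|}}(\mu',\mu'')$ and set $\rho_t:=\sfx^t_\sharp\mmu$. The map $t\mapsto F(\rho_t)$ is Lipschitz (since $F$ is $W_{p,\sfd_{\|\cdot\|}}$-Lipschitz and $t\mapsto\rho_t$ is a geodesic), hence absolutely continuous, and is differentiable at every $t$ with derivative given by \eqref{eq:derivative}; the fundamental theorem of calculus then gives $F(\mu'')-F(\mu')=\int_0^1\!\int_{\B\times\B}\la g(\rho_t,\sfx^t(x_0,x_1)),x_1-x_0\ra\,\d\mmu(x_0,x_1)\,\d t$. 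Bounding the integrand by $\|g(\rho_t,\sfx^t(x_0,x_1))\|_*\|x_1-x_0\|$, applying Hölder with exponents $p',p$, and using $\sfx^t_\sharp\mmu=\rho_t$ together with $\big(\int_{\B\times\B}\|x_1-x_0\|^p\,\d\mmu\big)^{1/p}=W_{p,\sfd_{\|\cdot\|}}(\mu',\mu'')$, one gets $|F(\mu'')-F(\mu')|\le W_{p,\sfd_{\|\cdot\|}}(\mu',\mu'')\int_0^1\|\rmD F_{\psi,\pphi}[\rho_t]\|_{*,p',\rho_t}\,\d t$. Since $\sup_{t\in[0,1]}W_{p,\sfd_{\|\cdot\|}}(\rho_t,\mu)\to0$ as $\mu',\mu''\to\mu$ (triangle inequality and $W_{p,\sfd_{\|\cdot\|}}(\rho_t,\mu')=tW_{p,\sfd_{\|\cdot\|}}(\mu',\mu'')$), the continuity recorded above gives $\limsup_{\mu',\mu''\to\mu}\int_0^1\|\rmD F_{\psi,\pphi}[\rho_t]\|_{*,p',\rho_t}\,\d t\le\|\rmD F_{\psi,\pphi}[\mu]\|_{*,p',\mu}$, whence $\lip_{W_{p,\sfd_{\|\cdot\|}}}F(\mu)\le\|\rmD F_{\psi,\pphi}[\mu]\|_{*,p',\mu}$.

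\emph{Lower bound (the main point).} Put $a:=\|\rmD F_{\psi,\pphi}[\mu]\|_{*,p',\mu}$; if $a=0$ there is nothing to prove, so assume $a>0$. I would test the asymptotic Lipschitz constant along the curves $\nu_t:=(\ii_\B+tu)_\sharp\mu$ of \eqref{eq:curves1}, for a bounded Borel vector field $u:\B\to\B$ to be chosen: by \eqref{eq:derivative1} at $s=0$ one has $\lim_{t\down0}\frac{F(\nu_t)-F(\mu)}{t}=\int_\B\la g(\mu,x),u(x)\ra\,\d\mu(x)$, while $W_{p,\sfd_{\|\cdot\|}}(\nu_t,\mu)\le t\big(\int_\B\|u\|^p\,\d\mu\big)^{1/p}$ via the plan $(\ii_\B,\ii_\B+tu)_\sharp\mu$; consequently $\lip_{W_{p,\sfd_{\|\cdot\|}}}F(\mu)\ge\big|\int_\B\la g(\mu,x),u(x)\ra\,\d\mu\big|\big/\big(\int_\B\|u\|^p\,\d\mu\big)^{1/p}$ whenever the denominator is positive. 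The delicate point is that, $\B$ not being assumed reflexive, there is in general no exact duality map $\B^*\to\B$, so I would construct an approximate one that is \emph{Borel} in the base point using separability of $\B$: fix a countable dense subset $\{w_j\}_j$ of the unit sphere of $\B$, let $A_j:=\{x\in\B:\la g(\mu,x),w_j\ra>\|g(\mu,x)\|_*-\delta\}$ (open by \eqref{eq:124}), which cover $\B$ because $\|v\|_*=\sup_j\la v,w_j\ra$ for $v\in\B^*$, disjointify them into Borel sets $B_j$, and set $v(x):=w_j$ on $B_j$ and $u(x):=\|g(\mu,x)\|_*^{p'-1}v(x)$. Then $\|v\|\equiv1$ and $\la g(\mu,x),v(x)\ra>\|g(\mu,x)\|_*-\delta$ for all $x$, so, using $(p'-1)p=p'$, one computes $\int_\B\|u\|^p\,\d\mu=\int_\B\|g(\mu,x)\|_*^{p'}\,\d\mu=a^{p'}$ and $\int_\B\la g(\mu,x),u(x)\ra\,\d\mu\ge a^{p'}-\delta\int_\B\|g(\mu,x)\|_*^{p'-1}\,\d\mu$; hence the ratio above is at least $a-\delta\,a^{1-p'}\int_\B\|g(\mu,x)\|_*^{p'-1}\,\d\mu$, which exceeds $a-\eps$ once $\delta$ is small, and the denominator $a^{p'-1}$ is positive. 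Letting $\eps\down0$ gives $\lip_{W_{p,\sfd_{\|\cdot\|}}}F(\mu)\ge a$.

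\emph{Conclusion and uniqueness.} The two inequalities yield the identity. Since $\lip_{W_{p,\sfd_{\|\cdot\|}}}F(\mu)$ is intrinsic to $F$, the quantity $\|\rmD F_{\psi,\pphi}[\mu]\|_{*,p',\mu}$ is independent of $(\psi,\pphi)$. For the sharper statement that $\rmD F_{\psi,\pphi}$ itself depends only on $F$ on $\overline{\domG^p_\B}$: by \eqref{eq:derivative1} at $s=0$ the bounded linear functional $u\mapsto\lim_{t\down0}\frac{F(\nu_t)-F(\mu)}{t}=\int_\B\la g(\mu,x),u(x)\ra\,\d\mu(x)$ on $L^p(\B,\mu;(\B,\|\cdot\|))$ depends only on $F$, $\mu$ and $u$; letting $u$ range over fields concentrated on arbitrarily small sets forces $\rmD F_{\psi,\pphi}(\mu,\cdot)$ to be uniquely determined by $F$ for $\mu$-a.e.\ $x$, and since it is continuous (\eqref{eq:124}) and $\mu$-a.e.\ equality of continuous functions is equality on $\supp(\mu)$, uniqueness holds on $\domG^p_\B$, hence on $\overline{\domG^p_\B}$ by joint continuity. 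I expect the only real obstacle to be the lower bound: one must replace a possibly nonexistent exact duality map by an approximate one that is \emph{measurable} in the base point, which is exactly where separability of $\B$ and the boundedness and continuity of $\rmD F_{\psi,\pphi}$ (from \eqref{eq:124}) are used essentially.
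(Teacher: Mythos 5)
Your proof is correct and follows essentially the same strategy as the paper's own proof: the upper bound via geodesic interpolation, \eqref{eq:derivative}, H\"older, and Lemma~\ref{lem:limit}; the lower bound via a Borel ``approximate duality map'' built from a countable dense subset of the unit sphere, disjointified into a measurable partition, then tested along the curve $\nu_t$ of \eqref{eq:curves1} with \eqref{eq:derivative1}. The only cosmetic difference is that the paper defines the approximate duality map $j_{p',\eps}$ once on all of $\B^*$ and then composes with $x\mapsto\rmD F_{\psi,\pphi}(\mu,x)$, whereas you build the partition directly on $\B$ as preimages — the two are equivalent, and your version of the final estimate (yielding $a-\delta a^{1-p'}\int\|g(\mu,\cdot)\|_*^{p'-1}\,\d\mu$) is correct.
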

\begin{proof} Let $\mu \in \prob_p(\B, \sfd_{\|\cdot\|})$ and let $(\mu'_n, \mu''_n) \in \prob_p(\B, \sfd_{\|\cdot\|})\times \prob_p(\B, \sfd_{\|\cdot\|})$ with $\mu'_n \ne \mu''_n$ be such that $(\mu'_n, \mu''_n) \to (\mu, \mu)$ in $W_{p, \sfd_{\|\cdot\|}}$ and
\[ \lim_n \frac{ \left |F(\mu'_n) - F(\mu''_n) \right |}{W_{p, \sfd_{\|\cdot\|}}(\mu'_n, \mu''_n)} = \lip_{W_{p, \sfd_{\|\cdot\|}}} F (\mu).\]
Let $(\mu_n^t)_{t \in [0,1]}$ be the curves defined as in \eqref{eq:curves} for plans $\mmu_n \in
\Gamma_{o,p, \sfd_{\|\cdot\|}}(\mu'_n, \mu''_n)$; we have
\begin{align*}
\left |F(\mu'_n) - F(\mu''_n) \right |&= \left |\int_0^1 \int_{\B\times \B} \la \rmD F_{\psi, \pphi}(\mu_n^t,\sfx^t(x_0,x_1)), x_1-x_0 \ra  \,\d \mmu_n(x_0,x_1) \,\d t \right | \\
 &\le \left ( \int_0^1 \int_{\B} \left \| \rmD F_{\psi, \pphi}(\mu_n^t,x) \right \|_*^{p'} \,\d \mu_n^t(x) \,\d t \right )^{\frac{1}{p'}} \\
 &\quad \,\,\left ( \int_0^1 \int_{\B} \left \|x_1-x_0 \right \|^p \,\d \mmu_n(x_0,x_1) \,\d t \right)^{\frac{1}{p}} \\
                                      &= W_{p, \sfd_{\|\cdot\|}}(\mu'_n, \mu''_n) \left ( \int_0^1 \int_{\B } \left \| \rmD F_{\psi, \pphi}(\mu_n^t,x) \right \|_*^{p'} \,\d  \mu_n^t(x)\,\d t \right )^{\frac{1}{p'}}, 
\end{align*}
where the first equality comes from \eqref{eq:derivative}. Dividing both sides by $W_{p, \sfd_{\|\cdot\|}}(\mu'_n, \mu''_n)$, we obtain
\[ \frac{\left |F(\mu'_n) - F(\mu''_n) \right |}{W_{p, \sfd_{\|\cdot\|}}(\mu'_n, \mu''_n)} \le \left ( \int_0^1 \int_{\B} \left \| \rmD F_{\psi, \pphi}(\mu_n^t,x) \right \|_*^{p'} \,\d  \mu_n^t(x) \,\d t \right )^{\frac{1}{p'}}.\]
Observe that $\mmu_n \to (\ii_{\B}, \ii_{\B})_\sharp
\mu$ in $\prob(\B\times\B)$
so that $\mu_n^t\to \mu$ in $\prob(\B)$ for every $t\in [0,1]$. 
We can pass to the limit as $n \to + \infty$ the above inequality using the dominated convergence Theorem and Lemma \ref{lem:limit} with
\[ G(\mu, x):= \left \| \rmD F_{\psi, \pphi}(\mu,x) \right \|_*^{p'}, \quad \mu \in \prob(\B), \, x\in \B.\]
We hence get 
\begin{align*}
    \lip_{W_{p, \sfd_{\|\cdot\|}}} F (\mu) &\le 
     \left ( \int_0^1 \int_{\B \times \B} \left \| \rmD F_{\psi, \pphi}(\mu,x) \right \|_*^{p'} \,\d \mu (x) \,\d t \right )^{\frac{1}{p'}}=
                 \|\rmD F_{\psi, \pphi}[\mu]\|_{*, p', \mu}.
\end{align*}
To prove the other inequality we consider a countable dense subset $E:= \{x_n\}_n$ of the unit sphere in $\B$ and, for every $\eps>0$, the maps $N_\eps: \B^* \to \N$ and $j_{p',\eps}: \B^* \to \B$ defined as
\begin{align*}
    N_{\eps}(x^*)&:= \min \left \{ n \in \N : \la x^*, x_n \ra \ge \|x^*\|_* - \eps \right \}, \quad &&x^* \in \B^*,\\
    j_{p',\eps}(x^*)&:= \|x^*\|^{p'/p}x_{N_\eps(x^*)}, \quad &&x^* \in \B^*.
\end{align*}
It is not difficult to check that $j_{p',\eps}$ is measurable: if we define the sets
\begin{align*}
A_n&:= \left \{ x^* \in \B^* : \la x^*, x_n \ra \ge \|x^*\|_* - \eps \right \}, \quad && n \in \N,\\
U_1&:= A_1, \\
U_n &:= A_n \setminus \cup_{i=1}^{n-1} A_i, \quad && n \in \N, \, n >1,
\end{align*}
then $\{U_n\}_{n \in \N}$ is a countable measurable partition of $\B^*$ and $j_{p',\eps}$ can be written as $j_{p',\eps}(x^*)= \|x^*\|_*^{p'/p} x_n$ if $x \in U_n$, so that $j_{p',\eps}$ is measurable. Moreover it is obvious that
\begin{equation}\label{eq:propj}
    \|j_{p',\eps}(x^*)\|^p = \|x^*\|_*^{p'}, \quad \la j_{p',\eps}(x^*), x^* \ra \ge \|x^*\|_*^{p'}-\eps \|x^*\|_*^{p'/p}, \quad \text {for every } x^* \in \B^*.
\end{equation}
Let us now consider the maps $T,u_\eps: \B \to \B$ defined as
\[ T(x) := \rmD F_{\psi, \pphi}[\mu](x), \quad u_{\eps}(x):= j_{p',\eps}(T(x)),\quad  x \in \B,\]
and the curve $(\nu^\eps_t)_{t \in [0,1]}$ defined as in \eqref{eq:curves1} with $u_\eps$ as above (notice that $u_\eps \in L^p(\B, \mu; (\B, \|\cdot\|))$ since it is Borel measurable and $\|u_\eps(x)\|^p= \|T(x)\|_*^{p'} < C < +\infty$ for every $x \in \B$). By \eqref{eq:derivative1}, we get that
\[ \lim_{t \downarrow 0} \frac{F(\nu_t^\eps)-F(\mu) }{t} = \int_{\B} \la \rmD F_{\psi, \pphi}(\mu,x), u_\eps(x) \ra \,\d \mu(x) \ge \|T\|^{p'}_{L^{p'}(\B, \mu; (\B^*, \|\cdot\|_*))}-C^{1/p}\eps,\]
where we used \eqref{eq:propj}. Moreover
\[ \frac{W_{p, \sfd_{\|\cdot\|}}(\mu, \nu_t^\eps)}{t} \le \|u_\eps\|_{L^p(\B, \mu; (\B, \|\cdot\|))} = \|T\|^{p'/p}_{L^{p'}(\B, \mu; (\B^*, \|\cdot\|_*))} \quad \text{ for every } t \in (0,1].\]
Thus
\[\lip_{W_{p, \sfd_{\|\cdot\|}}} F (\mu) \ge \limsup_{t \downarrow 0} \frac{F(\nu_t^\eps)-F(\mu) }{W_{p, \sfd_{\|\cdot\|}}(\mu, \nu_t^\eps)} \ge \|T\|^{p'-p'/p}_{L^{p'}(\B, \mu; (\B^*, \|\cdot\|_*))}-\frac{C^{1/p}\eps}{\|T\|^{p'/p}_{L^{p'}(\B, \mu; (\B^*, \|\cdot\|_*))}}. \]
Passing to $\lim_{\eps \downarrow 0}$ we obtain the sought inequality and this concludes the proof.
\end{proof}
\begin{remark}\label{rem:separable}
Note that the inequality
\[ \lip_{W_{p, \sfd_{\|\cdot\|}}} F (\mu) \le \|\rmD F_{\psi, \pphi}[\mu]\|_{*, p', \mu} \quad \text{ for every } \mu \in \prob_p(\B, \sfd_{\|\cdot\|})\]
holds even if the Banach space $(\B, \|\cdot\||)$ is non-separable, since separability is not used in the first part of the proof.
\end{remark}
Thanks to Proposition \ref{prop:equality} the following definition is well posed.
\begin{definition} Let $(\B, \|\cdot\|)$ be a separable Banach space. For every $F\in \ccyl{\prob(\B)}{\rmC_b^1(\B)}$, we define
\begin{align}
\rmD_p F (\mu,x) &:= \rmD F_{\psi, \pphi} (\mu,x), \quad (\mu,x)\in \overline{\domG^p},\\
\| \rmD_p F[\mu] \|_{*, p', \mu}&:=\| \rmD F_{\psi, \pphi}[\mu] \|_{\mu, p'}, \quad \mu \in \prob_p(\B, \sfd_{\|\cdot\|}),
\end{align}
where $N \in \N$, $\psi \in \rmC_b^1(\R^N)$ and $\uphi \in (\rmC_b^1(\B))^N$ are such that $F=\psi\circ\lin\uphi$, and $\domG^p$ is as in Remark \ref{rem:propDF}.
\end{definition}

\subsection{The density result in \texorpdfstring{$(\R^d, \|\cdot\|)$}{R} for any norm}\label{subsec:wsspace}
In this whole subsection (apart from Theorem \ref{thm:main}) we focus again on a finite dimensional Banach space with a sufficiently regular norm. To this aim we fix a dimension $d \in \N$ and a norm $\|\cdot\|$ on $\R^d$ satisfying \eqref{eq:costf}. As we did in Subsection \ref{subsec:Kuseful}, we work on the complete and separable metric space $(\R^d, \sfd_{\|\cdot\|})$, where $\sfd_{\|\cdot\|}$ is the distance induced by $\|\cdot\|$, and the corresponding $(p, \sfd_{\|\cdot\|})$-Wasserstein space. To simplify the notation, as we did in Subsection \ref{subsec:Kuseful}, in this subsection, we will simply write $\prob_p(\R^d)$, $W_p$ and $\Gamma_{o,p}$, omitting the dependence on $\sfd_{\|\cdot\|}$. For the rest of this subsection $\mm$ is a positive and finite Borel measure on $\prob_p(\R^d)$.

Recall that for a bounded Lipschitz function
$F:\prob_p(\R^d)\to\R$ the pre-Cheeger energy (cf.~\eqref{eq:prec})  associated to $\mm$ is defined by
\begin{equation}
  \label{eq:52}
  \pCE_{q}(F)=\int_{\prob_p(\R^d)}\big(\lip_{W_p} F(\mu)\big)^{q}\,\d\mm(\mu).
\end{equation}
Thanks to Proposition \ref{prop:equality}, if $F$ is a cylinder
function in $\ccyl{\prob(\R^d)}{\rmC_b^1(\R^d)}$, we have a nice equivalent expression
\begin{equation}
  \label{eq:53}
  \pCE_{q}(F)=\int_{\prob_p(\R^d)}\|\rmD_p F[\mu]\|^{q}_{*,p', \mu}\,\d\mm(\mu)= \int_{\prob_p(\R^d)} \left ( \int_{\R^d} \|\rmD_p F(\mu,x)\|_*^{p'} \, \d \mu (x) \right )^{q/p'} \, \d \mm(\mu),
\end{equation}
where $\|\cdot\|_*$ is the dual norm induced by $\|\cdot\|$. Notice that $(\pCE_q)^{1/q}$ is not simply the $L^q$-$L^{p'}$ mixed norm of $\rmD_p F$ in a Bochner space, since the measures $\mu$ w.r.t.~the inner norm is computed varies, but rather the norm in the direct $L^q(\prob_p(\R^d),\mm)$-integral of the Banach spaces $L^{p'}(\B,\mu; (\B^*, \|\cdot\|_*))$ (see e.g. \cite{rms, grupre} or Section \ref{sec:refl}).

We adopt the notation $\AA:=\ccyl{\prob(\R^d)}{\rmC_b^1(\R^d)}$ and we devote this subsection to the proof of Theorem \ref{thm:main}. The following Lemmas are the obvious adaptation of \cite[Lemma 4.14, Lemma 4.15]{FSS22} and their proofs are the same, and thus omitted.

 \begin{lemma}
   \label{le:limsup-approximation}
   Let $F_n$ be a sequence of functions in $D^{1,q}(\prob_p(\R^d), W_p, \mm;\AA)\cap L^\infty(\prob_p(\R^d), \mm)$ such
   that $F_n$ and 
   $|\rmD F_n|_{\star,q,\AA}$ are uniformly bounded in every bounded set
   of $\prob_p(\R^d)$ and let $F,G$ be Borel functions in
   $L^{q}(\prob_p(\R^d),\mm)$, $G$ nonnegative.
   If
   \begin{equation}
     \label{eq:170}
     \lim_{n\to\infty}F_n(\mu)= F(\mu),\quad
     \limsup_{n\to\infty}|\rmD F_n|_{\star,q,\AA}(\mu)\le G(\mu)
     \quad\text{$\mm$-a.e.~in $\prob_p(\R^d)$},
   \end{equation}
   then $F\in H^{1,{q}}(\prob_p(\R^d), W_p, \mm;\AA)$ and $|\rmD F|_{\star,q,\AA}\le G$.
 \end{lemma}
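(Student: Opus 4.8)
The plan is to produce a $(q,\AA)$-relaxed gradient of $F$ which is dominated by $G$, and then to read off the statement from the closure and minimality properties collected in Theorem \ref{thm:omnibus}. Throughout, write $g_n:=|\rmD F_n|_{\star,q,\AA}\in L^q(\prob_p(\R^d),\mm)$, which is well defined since each $F_n\in D^{1,q}(\prob_p(\R^d),W_p,\mm;\AA)$.

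First I would extract a weak limit of the gradients. Consider the increasing exhaustion of $\prob_p(\R^d)$ by the bounded sets $B_k:=\{\mu\in\prob_p(\R^d): W_p(\mu,\delta_0)\le k\}$. By hypothesis $\sup_n\sup_{B_k}g_n<\infty$, and since $\mm$ is finite this makes $(g_n\unit_{B_k})_n$ bounded in $L^q(\mm)$; by reflexivity of $L^q$ and a diagonal argument over $k$, I obtain a subsequence (not relabelled) along which $g_n\unit_{B_k}\weakto\tilde G_k$ in $L^q(\mm)$ for every $k$. The limits are nonnegative and consistent, $\tilde G_{k+1}\unit_{B_k}=\tilde G_k$, so they glue to a Borel function $\tilde G\ge 0$ on $\prob_p(\R^d)$. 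To identify $\tilde G$ I would invoke Mazur's lemma on each $B_k$: suitable convex combinations $v_j=\sum_i\lambda_i^{(j)}g_{n_i}$, with weights supported on $\{i\ge j\}$, converge to $\tilde G$ strongly in $L^q(B_k,\mm)$, hence (up to a further diagonal subsequence) $\mm$-a.e.\ on $B_k$; since $\limsup_n g_n\le G$ $\mm$-a.e., for $\mm$-a.e.\ $\mu$ and every $\eps>0$ one has $v_j(\mu)\le G(\mu)+\eps$ eventually, so $\tilde G\le G$ $\mm$-a.e.\ on $B_k$. Letting $k\to\infty$ gives $0\le\tilde G\le G$ $\mm$-a.e., in particular $\tilde G\in L^q(\mm)$, and, exploiting this domination by $G\in L^q(\mm)$ together with the finiteness of $\mm$, the ball-by-ball convergence can be upgraded to $g_n\weakto\tilde G$ in $L^q(\prob_p(\R^d),\mm)$ along the chosen subsequence.

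Now I would invoke closure. Since $\mm$ is finite, $F_n\to F$ $\mm$-a.e.\ implies $F_n\to F$ in $\mm$-measure; together with $g_n\weakto\tilde G$ in $L^q(\mm)$, the pairs $(F_n,g_n)\in S$ converge to $(F,\tilde G)$ in the product of the topology of convergence in $\mm$-measure and the weak topology of $L^q(\mm)$. By the closedness of $S$ in Theorem \ref{thm:omnibus}(1), $\tilde G$ is a $(q,\AA)$-relaxed gradient of $F$, so $F\in D^{1,q}(\prob_p(\R^d),W_p,\mm;\AA)$; by pointwise minimality (Theorem \ref{thm:omnibus}(3)) we conclude $|\rmD F|_{\star,q,\AA}\le\tilde G\le G$ $\mm$-a.e. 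Finally $F\in L^q(\prob_p(\R^d),\mm)$ by assumption and $\CE_{q,\AA}(F)=\int|\rmD F|_{\star,q,\AA}^q\,\d\mm\le\int G^q\,\d\mm<\infty$, whence $F\in H^{1,q}(\prob_p(\R^d),W_p,\mm;\AA)$, which is exactly the claim.

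I expect the main obstacle to be the weak-compactness extraction in the second paragraph: because $\prob_p(\R^d)$ is unbounded, the uniform bounds only prevent concentration of $g_n^q$ on bounded sets, and genuinely ruling out escape of mass to infinity forces one to use the a.e.\ bound $\limsup_n g_n\le G$ with $G\in L^q(\mm)$ (and the finiteness of $\mm$). Once the weak limit $\tilde G\le G$ is secured, everything else is a routine application of the closure, minimality and strong-approximation statements already available in the excerpt.
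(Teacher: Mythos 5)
Your outline (extract a weak $L^q$ limit of $g_n:=|\rmD F_n|_{\star,q,\AA}$, identify it via Mazur, then conclude from Theorem \ref{thm:omnibus}(1) and (3)) is the right shape, and the Mazur step establishing $\tilde G\le G$ $\mm$-a.e.\ is fine. But the obstacle you flag at the end is a genuine gap, not a point to be handled by a remark: the hypotheses do \emph{not} give a uniform $L^q(\mm)$-bound on $(g_n)$, so the ``upgrade'' from ball-by-ball weak convergence to $g_n\weakto\tilde G$ in $L^q(\prob_p(\R^d),\mm)$ is unjustified, and without it Theorem \ref{thm:omnibus}(1) cannot be applied to the pairs $(F_n,g_n)$. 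The local bound controls $g_n$ on each $B_k$, and Egorov applied to $\limsup_n g_n\le G$ gives, for each $\eta>0$, a set $E_\eta$ with $\mm(E_\eta^c)<\eta$ on which $g_n\le G+1$ for $n$ large; but on $E_\eta^c\cap B_k^c$ nothing in the hypotheses controls $g_n$, which may carry arbitrarily large $L^q$ mass there. Concretely, a sequence $g_n=G+n\,\unit_{A_n}$ with $A_n\subset B_n^c$ and $\mm(A_n)=1/(n\log^2 n)$ satisfies both the local uniform bound and the a.e.\ $\limsup$ bound yet has $\|g_n\|_{L^q(\mm)}\to\infty$; domination $\tilde G\le G$ is a statement about the limit, not the sequence, and neither gives uniform integrability of $(g_n^q)$.

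What makes the lemma usable in the paper is that it is invoked with gradients that are \emph{globally} uniformly bounded: in Proposition \ref{prop:fund} one has $|\rmD H_k|_{\star,q,\AA}(\mu)\le\zeta'(G_k(\mu))\,\big(\int_{\R^d}\|\nabla u_{h,\eps}\|_*^{p'}\,\d\mu\big)^{1/p'}$, and since $\zeta'$ has compact support while $G_k(\mu)\ge\int u_{1,\eps}\,\d\mu\ge\tfrac{1}{4p}\sqm{\mu}+a_1-A_{p,R,\eps}\to+\infty$ as $\sqm{\mu}\to\infty$, the factor $\zeta'(G_k(\mu))$ vanishes outside a fixed bounded set, on which the Claim 3 estimate turns the product into a bound in $L^\infty(\mm)$ uniform in $k$. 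Under such a global bound (or any hypothesis yielding uniform integrability of $(g_n^q)$) your argument goes through verbatim. To prove the lemma at the stated level of generality one would instead need a localization (cut off by a Lipschitz function of $W_p(\cdot,\delta_0)$ and use the Leibniz rule together with Theorem \ref{thm:omnibus}(4)); but then one must verify the cutoff lies in $D^{1,q}(\prob_p(\R^d),W_p,\mm;\AA)$, which is precisely the kind of statement the density theorem is designed to establish, so care is needed to avoid circularity.
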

 \begin{lemma}
   \label{le:speriamo-che-basti}
   Let $\phi\in \rmC^1(\R^d)$ be satisfying the growth conditions
      \begin{equation}
     \label{eq:186}
     \phi(x)\ge A\|x\|^p-B,\quad \|\nabla\phi(x)\|_*\le C(\|x\|^{p-1}+1)\quad\text{for
       every }x\in \R^d
   \end{equation}
   for given positive constants $A,B,C>0$
   and let $\zeta:\R\to\R$ be a $\rmC^1$ nondecreasing
   function whose derivative has compact support.
   Then
     the function
     $F(\mu):=\zeta\circ\lin\phi$
     is 
     Lipschitz in $\prob_p(\R^d)$,
     it
     belongs to
   $H^{1,q}(\prob_p(\R^d), W_p, \mm;\AA)$, and
   \begin{equation}
     \label{eq:180}
     |\rmD F|_{\star,q,\AA}(\mu)\le \zeta'(\lin\phi(\mu))\Big(\int_{\R^d}\|\nabla\phi\|_*^{p'}\,\d\mu\Big)^{1/p'}.
   \end{equation}
 \end{lemma}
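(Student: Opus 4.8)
The plan is to realize $F$ as a limit, in the sense of Lemma~\ref{le:limsup-approximation}, of genuine cylinder functions in $\AA=\ccyl{\prob(\R^d)}{\rmC_b^1(\R^d)}$, whose asymptotic Lipschitz constant is then controlled by Proposition~\ref{prop:equality}. I would fix $C^1$ functions $\beta_k:\R\to\R$ with $\beta_k(t)=t$ for $t\le k$, $0\le\beta_k'\le1$ everywhere, and $\beta_k'\equiv0$ on $[k+1,+\infty)$ (hence $\beta_k$ bounded, $\beta_k(t)\le t$, and $\beta_k(t)\to t$, $\beta_k'(t)\to1$ pointwise), and set $\phi_k:=\beta_k\circ\phi$, $F_k:=\zeta\circ\lin{\phi_k}$. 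The point where the hypotheses on $\phi$ enter is that $\phi_k\in\rmC_b^1(\R^d)$: the gradient $\nabla\phi_k=\beta_k'(\phi)\,\nabla\phi$ vanishes wherever $\phi>k+1$, and by the lower bound in \eqref{eq:186} the set $\{\phi\le k+1\}$ lies in a fixed ball, so the upper bound in \eqref{eq:186} makes $\|\nabla\phi_k\|_*$ globally bounded, while $-B\le\phi_k\le k+1$. Since $\zeta$ is bounded, $C^1$ and Lipschitz (its derivative has compact support), $\zeta\in\rmC_b^1(\R)$, hence $F_k\in\AA$.

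By Proposition~\ref{prop:equality}, $\lip_{W_p}F_k(\mu)=\|\rmD F_k[\mu]\|_{*,p',\mu}$ with $\rmD F_k(\mu,x)=\zeta'(\lin{\phi_k}(\mu))\,\beta_k'(\phi(x))\,\nabla\phi(x)$, so that $0\le\beta_k'\le1$ and $\zeta'\ge0$ give
\[
\lip_{W_p}F_k(\mu)\ \le\ \zeta'\big(\lin{\phi_k}(\mu)\big)\Big(\int_{\R^d}\|\nabla\phi\|_*^{p'}\,\d\mu\Big)^{1/p'},
\]
and by pointwise minimality (Theorem~\ref{thm:omnibus}(3)) the same bound holds $\mm$-a.e.\ for $|\rmD F_k|_{\star,q,\AA}$. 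Integrating the gradient bound in \eqref{eq:186} gives $-B\le\phi(x)\le C'(1+\|x\|^p)$, so $-B\le\phi_k\le\phi$ is dominated in $L^1(\mu)$ for $\mu\in\prob_p(\R^d)$ and $\lin{\phi_k}(\mu)\to\lin\phi(\mu)$ by dominated convergence; hence $F_k(\mu)\to F(\mu)$, and continuity of $\zeta'$ yields
\[
\limsup_{k\to\infty}|\rmD F_k|_{\star,q,\AA}(\mu)\ \le\ \zeta'\big(\lin\phi(\mu)\big)\Big(\int_{\R^d}\|\nabla\phi\|_*^{p'}\,\d\mu\Big)^{1/p'}=:G(\mu)\qquad\text{for all }\mu\in\prob_p(\R^d),
\]
the integral being finite since $(p-1)p'=p$.

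To apply Lemma~\ref{le:limsup-approximation} I still need $F,G\in L^q(\prob_p(\R^d),\mm)$ and the uniform bounds on bounded sets. Here $F=\zeta\circ\lin\phi$ is bounded; and $G$ is \emph{bounded} as well, because on $\{\zeta'(\lin\phi(\cdot))\ne0\}$ the value $\lin\phi(\mu)$ stays in the compact support of $\zeta'$, so the lower bound $\lin\phi(\mu)\ge A\sqm\mu-B$ keeps $\sqm\mu$, and therefore $\int_{\R^d}\|\nabla\phi\|_*^{p'}\,\d\mu\le C''(1+\sqm\mu)$, bounded. The same shows $|F_k|\le\|\zeta\|_\infty$ and $|\rmD F_k|_{\star,q,\AA}(\mu)\le\|\zeta'\|_\infty(C''(1+\sqm\mu))^{1/p'}$ are uniformly (in $k$) bounded on bounded subsets of $\prob_p(\R^d)$. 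Lemma~\ref{le:limsup-approximation} then delivers $F\in H^{1,q}(\prob_p(\R^d),W_p,\mm;\AA)$ with $|\rmD F|_{\star,q,\AA}\le G$, i.e.\ \eqref{eq:180}.

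There remains the Lipschitz continuity of $F$ on $\prob_p(\R^d)$. From $\phi(x)-\phi(y)=\int_0^1\langle\nabla\phi(y+t(x-y)),x-y\rangle\,\d t$, the upper bound in \eqref{eq:186} and $\|(1-t)y+tx\|\le\max(\|x\|,\|y\|)$, one gets $|\phi(x)-\phi(y)|\le C\|x-y\|\big(\max(\|x\|,\|y\|)^{p-1}+1\big)$, and Hölder against an optimal plan turns this into the \emph{local} Lipschitz estimate $|\lin\phi(\mu)-\lin\phi(\nu)|\le C_1\,(1+\sqm\mu+\sqm\nu)^{1/p'}\,W_p(\mu,\nu)$. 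Since $\zeta$ is globally Lipschitz and constant outside a compact interval, a short case analysis finishes: if $\sqm\mu$ and $\sqm\nu$ are both large, then $\lin\phi(\mu),\lin\phi(\nu)$ fall in the region where $\zeta$ is constant and $F(\mu)=F(\nu)$; if $W_p(\mu,\nu)\ge1$, then $|F(\mu)-F(\nu)|\le2\|\zeta\|_\infty W_p(\mu,\nu)$; and if one of $\sqm\mu,\sqm\nu$ is bounded and $W_p(\mu,\nu)<1$, the triangle inequality (recall $W_p(\mu,\delta_0)^p=\sqm\mu$) bounds the other too, so the local estimate applies with a constant independent of $\mu,\nu$. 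I expect this last step to be the only genuinely delicate point: the local Lipschitz constant of $\lin\phi$ diverges as $\sqm\mu\to\infty$, so the compact support of $\zeta'$ has to be used structurally — not merely to obtain $G\in L^q$ — in order to upgrade to a global Lipschitz bound for $F$.
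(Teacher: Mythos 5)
Your proof is correct and uses the standard approach (the same one that, following \cite[Lemma 4.15]{FSS22}, the paper invokes but does not write out): truncate $\phi$ to get genuine cylinder functions $F_k=\zeta\circ\lin{\beta_k\circ\phi}\in\AA$, bound $\lip_{W_p}F_k$ via Proposition~\ref{prop:equality} and pointwise minimality (Theorem~\ref{thm:omnibus}(3)), pass to the limit with Lemma~\ref{le:limsup-approximation}, and verify the hypotheses using that $\lin\phi(\mu)\ge A\sqm\mu-B$ forces $\zeta'(\lin\phi(\mu))$ to vanish for $\sqm\mu$ large. The case analysis for the global Lipschitz bound, and the check that $G$ is bounded (not merely $L^q$), are exactly the delicate points and are handled correctly.
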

 Let $\kappa \in \rmC_c^{\infty}(\R^d)$ be such that $\supp{\kappa}=\overline{\rmB(0,1)}$ (here $\rmB(0,1)$ is the unit $d$-dimensional Euclidean ball) , $\kappa(x) \ge 0$ for every $x \in \R^d$, $\kappa(x)>0$ for every $x \in \rmB(0,1)$, $\int_{\R^d} \kappa \d \mathcal{L}^d=1$ and $\kappa(-x)=\kappa(x)$ for every $x \in \R^d$. Let us define, for every $0<\eps<1$, the standard mollifiers
\[ \kappa_\eps (x) := \frac{1}{\eps^d} \kappa (x/\eps), \quad x \in \R^d.\]
Given $\sigma \in \prbt$ and $0<\eps<1$, we define
\[
\sigma_\eps := \sigma \ast \kappa_\eps.
  \]
Notice that $\sigma_\eps \in \prob_p^r(\R^d)$ and $W_p(\sigma_\eps, \sigma) \to 0$ as $\eps \downarrow 0$. Moreover, if $\sigma, \sigma' \in \prbt$, we have
\begin{equation}\label{eq:ineqconv}
    W_p(\sigma_\eps, \sigma'_\eps) \le W_p(\sigma, \sigma') \quad \text{ for every } 0<\eps<1
\end{equation}
and it is easy to check that, if we set
\begin{equation}\label{eq:kappamom}
    C_\eps := \rsqm {\kappa_\eps \mathcal{L}^d},
\end{equation}
then we have
\begin{equation}\label{eq:contrmom}
\rsqm{\mu_\eps} \le \rsqm{\mu} + C_\eps \quad \text{ for every } 0<\eps<1.
\end{equation}
\begin{definition} Let $0<\eps < 1$ and $\nu \in \prob_p(\R^d)$. We define the continuous functions $W_\nu, W_\nu^\eps, F_\nu^\eps : \prbt \to \R$ as
\begin{align*}
W_\nu(\mu):= W_p(\mu, \nu), \quad W_\nu^{\eps}(\mu) := W_{\nu}(\mu_\eps), \quad F^\eps_\nu (\mu) := \frac{1}{p}(W_\nu^{\eps}(\mu))^p, \quad \mu \in \prbt.
 \end{align*}
\end{definition}
The proof of the following proposition follows the one of \cite[Proposition 4.17]{FSS22} but since the exponent $p$ may be different from $2$, the estimates are more complicated and thus reported in full.
\begin{proposition}\label{prop:fund} Let $0<\eps < 1$, $\delta, R>0$, let $\nu \in \prob_p^r(\R^d)$ be such that $\supp{\nu}=\overline{\rmB_{\|\cdot\|}(0,R)}$ and $\nu \mres \rmB_{\|\cdot\|}(0,R) \ge \delta \mathcal{L}^d \mres \rmB_{\|\cdot\|}(0,R)$, and let $\zeta:\R\to\R$ be a $\rmC^1$ nondecreasing function whose derivative has compact support. Then
  \begin{equation}
    \label{eq:174}
  |\rmD(\zeta \circ F_\nu^\eps)|_{\star,q, \AA}(\mu) \le
  \zeta'(F^\eps_\nu(\mu))W_p^{p-1}(\nu, \mu_\eps) \quad
    \text{ for } \mm \text{-a.e. }\mu \in \prbt.
  \end{equation}
\end{proposition}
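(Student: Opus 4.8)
The plan is to build, from mollified Kantorovich potentials, an increasing sequence of functions in $H^{1,q}(\prob_p(\R^d),W_p,\mm;\AA)$ whose $(q,\AA)$-relaxed gradients are controlled by the right-hand side of \eqref{eq:174}, and then to pass to the limit by Lemma~\ref{le:limsup-approximation}. Fix a countable set $\{\mu_k\}_k$ dense in $\prob_p(\R^d)$. Since $(\mu_k)_\eps\in\prob_p^r(\R^d)$ and $\supp\nu=\overline{\rmB_{\|\cdot\|}(0,R)}$, Theorem~\ref{thm:ot} gives the optimal pair $(\varphi_k,\psi_k):=\big(\Phi(\nu,(\mu_k)_\eps),\Phi^*(\nu,(\mu_k)_\eps)\big)$; set $c_k:=\int_{\rmB_{\|\cdot\|}(0,R)}\varphi_k\,\d\nu$, $\psi_{k,\eps}:=\psi_k\ast\kappa_\eps\in\rmC^\infty(\R^d)$ and $G_k(\sigma):=\lin{\psi_{k,\eps}}(\sigma)+c_k=\int_{\R^d}\psi_k\,\d\sigma_\eps+c_k$ for $\sigma\in\prob_p(\R^d)$ (the last equality using the symmetry of $\kappa_\eps$). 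By condition (i) of Theorem~\ref{thm:ot}, $G_k\le F_\nu^\eps$ on $\prob_p(\R^d)$, with equality at $\sigma=\mu_k$ by (ii). From \eqref{eq:phiest2} and \eqref{eq:lowerboundphi} one checks that $\psi_{k,\eps}$ satisfies the growth conditions \eqref{eq:186}, so Lemma~\ref{le:speriamo-che-basti}, applied with $\phi=\psi_{k,\eps}$ and with the nondecreasing $\rmC^1$ function $t\mapsto\zeta(t+c_k)$ (whose derivative still has compact support), gives $H_k:=\zeta\circ G_k\in H^{1,q}(\prob_p(\R^d),W_p,\mm;\AA)$ together with $|\rmD H_k|_{\star,q,\AA}(\sigma)\le\zeta'(G_k(\sigma))\big(\int_{\R^d}\|\nabla\psi_{k,\eps}\|_*^{p'}\,\d\sigma\big)^{1/p'}$ for $\mm$-a.e.\ $\sigma$. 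Finally, since $\kappa_\eps$ is a symmetric probability density and $v\mapsto\|v\|_*^{p'}$ is convex, Jensen's inequality yields $\int_{\R^d}\|\nabla\psi_{k,\eps}\|_*^{p'}\,\d\sigma\le\int_{\R^d}\|\nabla\psi_k\|_*^{p'}\,\d\sigma_\eps$, which at $\sigma=\mu_k$ equals $W_p^p(\nu,(\mu_k)_\eps)$ by \eqref{eq:158} (this is the source of the exponent $p-1$ in \eqref{eq:174}), and which by \eqref{eq:phiest2} is in general bounded by $K(1+\sqm{\sigma_\eps})$.

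Next I would show that $F_\nu^\eps=\sup_k G_k$, hence $\zeta\circ F_\nu^\eps=\sup_k H_k$, pointwise on $\prob_p(\R^d)$: for $\sigma\in\prob_p(\R^d)$ pick $\mu_{k_j}\to\sigma$ in $W_p$; then $(\mu_{k_j})_\eps\to\sigma_\eps$ by \eqref{eq:ineqconv}, whence $G_{k_j}(\mu_{k_j})=\tfrac1p W_p^p(\nu,(\mu_{k_j})_\eps)\to F_\nu^\eps(\sigma)$, while $|G_{k_j}(\sigma)-G_{k_j}(\mu_{k_j})|=\big|\int_{\R^d}\psi_{k_j}\,\d(\sigma_\eps-(\mu_{k_j})_\eps)\big|\to 0$ by \eqref{eq:w2lipphi} (valid since $\psi_{k_j}$ obeys \eqref{eq:lipestphi}) and the boundedness of the moments; as $G_k\le F_\nu^\eps$ always, this proves the claim, and the passage to $\zeta\circ F_\nu^\eps$ is legitimate because $\zeta$ is continuous and nondecreasing. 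Set $\hat F_n:=\max(H_1,\dots,H_n)$, so $\hat F_n\uparrow\zeta\circ F_\nu^\eps$; by the truncation property Theorem~\ref{thm:omnibus}(4), $\hat F_n\in D^{1,q}(\prob_p(\R^d),W_p,\mm;\AA)$ and $|\rmD\hat F_n|_{\star,q,\AA}=|\rmD H_k|_{\star,q,\AA}$ $\mm$-a.e.\ on $\{\hat F_n=H_k\}$. Because $\zeta'$ has compact support, $\zeta$ is bounded and $\{\zeta'\ne0\}\subseteq\{F_\nu^\eps\le T\}$ for some $T>0$; together with the bounds of the previous paragraph this makes $\hat F_n$ and $|\rmD\hat F_n|_{\star,q,\AA}$ uniformly bounded on bounded subsets of $\prob_p(\R^d)$, and makes the function $G$, defined by $G(\sigma):=\zeta'(F_\nu^\eps(\sigma))\,W_p^{p-1}(\nu,\sigma_\eps)$, bounded and nonnegative. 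Thus all hypotheses of Lemma~\ref{le:limsup-approximation} will hold once we prove that $\limsup_n|\rmD\hat F_n|_{\star,q,\AA}(\mu)\le G(\mu)$ for $\mm$-a.e.\ $\mu$.

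To prove this pointwise estimate, fix such a $\mu$ and a subsequence realizing the $\limsup$; along it $|\rmD\hat F_n|_{\star,q,\AA}(\mu)=|\rmD H_{k(n)}|_{\star,q,\AA}(\mu)\le\zeta'(G_{k(n)}(\mu))\big(\int_{\R^d}\|\nabla\psi_{k(n),\eps}\|_*^{p'}\,\d\mu\big)^{1/p'}$, where $k(n)\le n$ attains $\hat F_n(\mu)$, and (after a further extraction) $G_{k(n)}(\mu)\to s\in[-\infty,F_\nu^\eps(\mu)]$ with $\zeta(G_{k(n)}(\mu))=\hat F_n(\mu)\to\zeta(F_\nu^\eps(\mu))$. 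If $s<F_\nu^\eps(\mu)$: when $s$ is finite, $\zeta(s)=\zeta(F_\nu^\eps(\mu))$ forces $\zeta$ constant on $[s,F_\nu^\eps(\mu)]$, so $\zeta'(s)=0$; when $s=-\infty$, $G_{k(n)}(\mu)$ eventually leaves the compact support of $\zeta'$; in either case $\zeta'(G_{k(n)}(\mu))\to0$ and, the square bracket being bounded, the bound tends to $0\le G(\mu)$. If $s=F_\nu^\eps(\mu)$, then $(\varphi_{k(n)},\psi_{k(n)})$ is a maximizing sequence of admissible pairs for $(\nu,\mu_\eps)$; here the assumption $\nu\mres\rmB_{\|\cdot\|}(0,R)\ge\delta\,\mathcal L^d\mres\rmB_{\|\cdot\|}(0,R)$ enters, for combined with \eqref{eq:phiest1} and the boundedness of $G_{k(n)}(\mu)$ it forces $\int_{\rmB_{\|\cdot\|}(0,R)}\varphi_{k(n)}\,\d\mathcal L^d\ge-I$ uniformly in $n$, so Proposition~\ref{prop:oldcl4567} applies and, up to a further subsequence, $\psi_{k(n)}\to\Phi^*(\nu,\mu_\eps)$ locally uniformly with $\nabla\psi_{k(n)}\to\nabla\Phi^*(\nu,\mu_\eps)$ $\mathcal L^d$-a.e.\ (the limit pair being optimal and normalized as in (iii)--(iv), it is the unique one of Theorem~\ref{thm:ot}). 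Dominated convergence with the domination \eqref{eq:phiest2}, together with \eqref{eq:158}, then gives $\int_{\R^d}\|\nabla\psi_{k(n),\eps}\|_*^{p'}\,\d\mu\le\int_{\R^d}\|\nabla\psi_{k(n)}\|_*^{p'}\,\d\mu_\eps\to W_p^p(\nu,\mu_\eps)$, while $\zeta'(G_{k(n)}(\mu))\to\zeta'(F_\nu^\eps(\mu))$; hence $\limsup_n|\rmD\hat F_n|_{\star,q,\AA}(\mu)\le\zeta'(F_\nu^\eps(\mu))\,W_p^{p-1}(\nu,\mu_\eps)=G(\mu)$. Lemma~\ref{le:limsup-approximation} then gives $\zeta\circ F_\nu^\eps\in H^{1,q}(\prob_p(\R^d),W_p,\mm;\AA)$ with $|\rmD(\zeta\circ F_\nu^\eps)|_{\star,q,\AA}\le G$ $\mm$-a.e., which is \eqref{eq:174}.

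I expect the decisive difficulty to be this last point: that a maximizing sequence of Kantorovich potentials converges, \emph{with convergence of the gradients}, to the optimal potential. This is exactly what the compactness machinery of Propositions~\ref{prop:oldcl23}--\ref{prop:oldcl4567} is designed for, and it is precisely here that the nondegeneracy hypothesis $\nu\ge\delta\,\mathcal L^d$ on $\rmB_{\|\cdot\|}(0,R)$ is used --- it converts an $L^1(\nu)$ bound on the potentials into the $L^1(\mathcal L^d)$ bound required to invoke those propositions. By comparison the remaining ingredients --- the growth estimates for $\psi_{k,\eps}$, the Jensen step, the reduction to countably many $G_k$, and the bookkeeping forced by $\zeta$ being merely nondecreasing --- are routine.
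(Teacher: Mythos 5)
Your proof is correct and takes essentially the same route as the paper's: both build the subsolutions from the mollified Kantorovich potentials of Theorem~\ref{thm:ot}, bound their minimal relaxed gradients via Lemma~\ref{le:speriamo-che-basti} together with the truncation property of Theorem~\ref{thm:omnibus}(4), invoke the nondegeneracy $\nu\ge\delta\,\mathcal{L}^d$ on $\rmB_{\|\cdot\|}(0,R)$ together with Propositions~\ref{prop:oldcl23}--\ref{prop:oldcl4567} to pass a maximizing sequence of potentials (and, crucially, their gradients) to the limit, and conclude via Lemma~\ref{le:limsup-approximation}. The only variation is bookkeeping: the paper defines $G_k:=\max_{h\le k}\ell_h$ and tracks the index $h_k$ where this max of linear functionals is attained, so that $\ell_{h_k}(\mu)=G_k(\mu)\to F_\nu^\eps(\mu)$ holds automatically and the case split is avoided, whereas you track the index where $\max_k H_k$ is attained, which, because $\zeta$ may be locally constant, need not be maximizing for the linear functionals and hence forces the dichotomy on $s=\lim G_{k(n)}(\mu)$ that you handle correctly by noting $\zeta'$ vanishes at $s$ when $s<F_\nu^\eps(\mu)$.
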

\begin{proof}
  Let $\mathcal{G}:=\{\mu^h\}_{h \in \N}$ be a dense and countable set
  in $\prbt$ and let us set, for every $h \in \N$, $\varphi_h :=
  \Phi(\nu, \mu^h_\eps)$, $\varphi_h^* :=
  \Phi^*
  (\nu, \mu^h_\eps)$ (see Theorem \ref{thm:ot}), 
\[ a_h:= \int_{\rmB_{\|\cdot\|}(0,R)}\varphi_h \d \nu, \quad u_h:=\varphi_h^* +a_h, \quad G_k(\mu):= \max_{1 \le h \le k} \int_{\R^d} u_h \d \mu_\eps, \quad
  \mu \in \prbt, \quad k \in \N. \]
Notice that, by \eqref{eq:phiest1}, $u_h \in L^1(\R^d, \mu_\eps)$ for every $\mu \in \prob_p(\R^d)$.

\emph{Claim 1}. It holds
\[ \lim_{k \to + \infty} G_k(\mu) = 
  F_\nu^{\eps}(\mu) \quad \text{ for every } \mu \in \prbt.\]
\emph{Proof of claim 1}. Since $G_{k+1}(\mu) \ge G_{k}(\mu)$ for every $\mu \in \prbt$, we have that 
\[ \lim_{k \to + \infty} G_k(\mu) = \sup_k G_k(\mu) = \sup_h \int_{\R^d} u_h \d \mu_\eps \quad \text{ for every } \mu \in \prbt.\]
By the definition of $\varphi_h$ and $\varphi_h^*$ we have, for every $\mu \in \prbt$ and $h \in \N$, that
\begin{align*}
    \int_{\R^d} u_h \d \mu_\eps &= \int_{\R^d}\varphi_h^* \d \mu_\eps + \int_{\rmB_{\|\cdot\|}(0,R)} \varphi_h(y) \d \nu\\
                                & \le \frac{1}{p} W_p^p(\mu_\eps, \nu)\\
                &= 
                                  F_\nu^{\eps}(\mu). 
\end{align*}
This proves that $\sup_k G_k(\mu) \le
F_\nu^{\eps}(\mu)$ for every $\mu \in \prbt$. Clearly, if $\mu \in \mathcal{G}$, this is an equality. By \eqref{eq:w2lipphi} there exists a constant $D_{p,R}$ such that, for every $h \in \N$ and $\mu, \mu' \in \prob_p(\R^d)$, it holds
\begin{align*}
    \int_{\R^d}u_h \d \mu_\eps - \int_{\R^d} u_h \d \mu'_\eps &=  \int_{\R^d} \varphi_h^* \d (\mu_\eps-\mu'_\eps) \\
    & \le D_{p,R} W_p(\mu_\eps, \mu'_\eps)(1+\rsqm{\mu_\eps}+\rsqm{\mu'_\eps})\\
    & \le D_{p,R} W_p(\mu, \mu') (1+2C_\eps + \rsqm{\mu}+\rsqm{\mu'}),
\end{align*}
where we used \eqref{eq:ineqconv} and \eqref{eq:contrmom}. We hence deduce that for every $k \in \N$
\begin{equation}\label{eq:12}
    \left |G_k(\mu)-G_k(\mu') \right | \le D_{p,R} W_p(\mu, \mu') (1+2C_\eps + \rsqm{\mu}+\rsqm{\mu'}) \quad \text{ for every } \mu, \mu' \in \prbt.
\end{equation}
Choosing $\mu' \in \mathcal{G}$ and passing to the limit as $k \to + \infty$ we get from \eqref{eq:12} that
\[ \left | \lim_{k \to + \infty} G_k(\mu)-F_\nu^\eps(\mu') \right | \le D_{p,R} W_p(\mu, \mu') (1+2C_\eps + \rsqm{\mu}+\rsqm{\mu'})  \text{ for every } \mu \in \prbt, \, \mu' \in \mathcal{G}.\]
Using the density of $\mathcal{G}$ and the continuity of $\mu' \mapsto F_\nu^{\eps}(\mu')$ we deduce that 
\[ \lim_{k \to + \infty} G_k(\mu) =
  F_\nu^\eps(\mu) \quad \text{ for every } \mu \in \prbt\]
proving the first claim.

\emph{Claim 2}.
If $H_k:= \zeta\circ G_k$ and $u_{h,\eps}:=u_h\ast \kappa_\eps$, it holds
\[ | \rmD H_k |_{\star, q, \AA}^{p'}(\mu) \le
  \big(\zeta'(G_k(\mu))\big)^{p'}\int_{\R^d}\left \| \nabla u_{h,\eps} \right \|_*^{p'}  \d \mu =
  \big(\zeta'(G_k(\mu))\big)^{p'}\int_{\R^d}\left \|\nabla (\varphi_h^* \ast \kappa_\eps) \right \|_*^{p'} \d \mu, \]
for $\mm$-a.e.~$\mu \in B_h^k$, where $B_h^k := \{ \mu \in \prbt : G_k(\mu) = \int_{\R^d} u_h \d \mu_\eps \}$, $h \in \{1, \dots, k\}$.\\
\emph{Proof of claim 2}. For every $h \in \N$, \eqref{eq:lowerboundphi} and \eqref{eq:phiest2} (also using \eqref{eq:ineqtt}) yield
\begin{equation}
  \label{eq:193}
  u_{h,\eps}(x)\ge \frac{1}{4p}\|x\|^p +a_h - A_{p,R,\eps}, \quad 
  \|\nabla u_{h,\eps}(x)\|_*\le A_{p,R,\eps}(1+\|x\|^{p-1}) \quad \text{ for every } x \in \R^d,
\end{equation}
where $A_{p,R,\eps}>0$ is a constant depending only on $p,R,\eps$. Since  the map $\ell_h: \prbt \to \R$ defined as $\ell_h(\mu):=
\int_{\R^d} u_h \d \mu_\eps $ satisfies
\[ \ell_h(\mu)= \int_{\R^d} (u_h \ast \kappa_\eps) \d \mu=\lin{u_{h,\eps}}(\mu), \quad \mu \in \prbt,\]
Lemma \ref{le:speriamo-che-basti} and the
above estimates yield
\[ |\rmD(\zeta\circ \ell_h)|_{\star, q, \AA}(\mu) \le
  \zeta'(\ell_h(\mu))\Big(\int_{\R^d} \left \| \nabla
    u_{h,\eps}
  \right \|_*^{p'} \d \mu \Big)^{1/p'}\quad
  \text{for $\mm$-a.e. } \mu \in \prbt.\]
Since $H_k$ can be written as
\[ H_k(\mu) = \max_{1 \le h \le k} (\zeta\circ \ell_h)(\mu), \quad \mu \in \prbt,\]
we can apply Theorem \ref{thm:omnibus} (4) and conclude the proof of
the second claim.

\emph{Claim 3}. For every $R>0$ there exists a constant $C>0$ independent of $h$  such
that
\begin{equation}
  \label{eq:192}
  \Big(\int_{\R^d} \left \| \nabla u_{h,\eps}
  \right \|_*^{p'} \d \mu\Big)^{1/p'} \le C\quad\text{whenever }\rsqm\mu\le R.
\end{equation}
\emph{Proof of Claim 3}.
It is sufficient to use \eqref{eq:193}.

\emph{Claim 4}. Let $(h_n)_n \subset \N$ be an increasing sequence and
let $\mu \in \prbt$. If $\lim_n \int_{\R^d} u_{h_n} \d \mu_\eps
=
F_\nu^\eps(\mu)$, then 
\[ \limsup_n \int_{\R^d} \left \|\nabla ( \varphi_{h_n}^* \ast \kappa_\eps ) \right \|_*^{p'} \d \mu  \le W_p^{p}(\nu, \mu_\eps).\]
\quad \\
\emph{Proof of claim 4}. Since
\[ \left \|\nabla (\varphi_{h_n}^* \ast \kappa_\eps)(x) \right \|_*^{p'} \le \left (\left \|\nabla \varphi_{h_n}^* \right \|_*^{p'} \ast \kappa_\eps \right )(x) \quad \text{ for every } x \in \R^d,\]
we get that
\begin{align*}
                                  \int_{\R^d} \left \|\nabla
                                  (\varphi_{h_n}^* \ast \kappa_\eps)
                                  \right \|_*^{p'} \d \mu
                                   & \le \int_{\R^d} \left ( \left \|\nabla \varphi_{h_n}^* \right \|_*^{p'} \ast \kappa_\eps \right ) \d \mu \\
 & = \int_{\R^d} \left \|\nabla \varphi_{h_n}^* \right \|_*^{p'}\d \mu_{\eps}.
\end{align*}
It is then enough to prove that 
\begin{equation}\label{eq:easier}
\limsup_n\int_{\R^d} \left \|\nabla \varphi_{h_n}^* \right \|_*^{p'}\d \mu_{\eps} \le W_p^{p}(\nu, \mu_\eps).
\end{equation}
Let us set $\phi_n:= \varphi_{h_n}$ and $\psi_n:=\varphi_{h_n}^*$ and let us extract a (unrelabeled) subsequence such that the $\limsup$ in the statement of the claim is achieved as a limit. Since $\phi_n- \frac{1}{p}\|x\|^p \le 0$ for every $x \in \R^d$ and $\nu \mres \rmB_{\|\cdot\|}(0,R) \ge \delta \mathcal{L}^d \mres \rmB_{\|\cdot\|}(0,R)$, we have
\begin{align}\label{eq:unpezzo} \begin{split}
    \int_{\rmB_{\|\cdot\|}(0,R)} \phi_n \, \d \mathcal{L}^d &= \int_{\rmB_{\|\cdot\|}(0,R)} \left (\phi_n(x)- \frac{1}{p}\|x\|^p \right ) \, \d \mathcal{L}^d(x) + \frac{1}{p} \sqm{\mathcal{L}^d \mres \rmB_{\|\cdot\|}(0,R)} \\
    & \ge \frac{1}{\delta} \int_{\rmB_{\|\cdot\|}(0,R)} \phi_n \, \d \nu - \frac{1}{p} \sqm{\nu \mres \rmB_{\|\cdot\|}(0,R)} + \frac{1}{p} \sqm{\mathcal{L}^d \mres \rmB_{\|\cdot\|}(0,R)}.\end{split}
\end{align}
By the convergence of $\int_{\R^d} u_{h_n} \d \mu_\eps$ to $F_{\nu}^{\eps}(\mu)$,  we can find $N \in \N$ such that 
\[ \int_{\rmB_{\|\cdot\|}(0,R)} \phi_n \, \d \nu  \ge -1 - \int_{\R^d} \psi_n \, \d \mu_{\eps} \ge -1 - K_{p,R} \left ( 1+ \sqm{\mu_{\eps}} \right) \quad \text{ for every } n \ge N,\]
where the last inequality comes from \eqref{eq:phiest1}. Combining the above inequality with \eqref{eq:unpezzo} we get that there exists some $I>0$ such that 
\begin{equation}
    \label{eq:unboundcond}
    \int_{\rmB_{\|\cdot\|}(0,R)} \phi_n \, \d \mathcal{L}^d \ge - I \quad \text{ for every } n \in \N.
\end{equation}
We can thus apply Proposition \ref{prop:oldcl4567} and obtain the existence of a subsequence $j \mapsto n(j)$ and two locally Lipschitz functions $\phi$ and $\psi$ such that $\phi_{n(j)} \to \phi$ locally uniformly in $\rmB_{\|\cdot\|}(0,R)$, $\psi_{n(j)} \to \psi$ locally uniformly in $\R^d$ and \eqref{eq:convgrad} holds. Being the inequality
\[ \phi_{n(j)}(x) + \psi_{n(j)}(y) \le \frac{1}{p}\|x-y\|^p \quad \text{ for every } (x,y) \in \rmB_{\|\cdot\|}(0,R) \times \R^d\]
satisfied for every $j \in \N$, we can pass to the limit and obtain point (i) in Theorem \ref{thm:ot} for the pair $(\phi, \psi)$. Since $\phi_n(x) \le \frac{1}{p}\|x\|^p$ for every $x \in \R^d$ we get by Fatou's lemma that
\begin{equation}\label{eq:unlim}
   -I \le \limsup_j \int_{\rmB_{\|\cdot\|}(0,R)} \phi_{n(j)} \, \d \nu \le \int_{\rmB_{\|\cdot\|}(0,R)} \limsup_j \phi_{n(j)}(x) \, \d \nu(x) = \int_{\rmB_{\|\cdot\|}(0,R)} \phi \, \d \nu \le \frac{1}{p} \sqm{\nu}. 
\end{equation}
This in particular gives that $\phi \in L^1(\rmB_{\|\cdot\|}(0,R), \nu)$. By \eqref{eq:phiest1} we have that $|\psi(y)| \le K_{p,R}(1+\|y\|^p)$ so that $\psi \in L^1(\R^d, \mu)$ and we can also apply the dominated convergence theorem and obtain that
\begin{equation}\label{eq:unlim1}
    \lim_j \int_{\R^d} \psi_{n(j)} \, \d \mu_{\eps} = \int_{\R^d} \psi \, \d \mu_{\eps}.
\end{equation}
Combining \eqref{eq:unlim} with \eqref{eq:unlim1} and using that $\lim_n \int_{\R^d} u_{h_n} \, \d \mu_\eps = F_\nu^\eps(\mu)$, we get point (ii) in Theorem \ref{thm:ot} for the pair $(\phi, \psi)$, so that, by the first part of Theorem \ref{thm:ot}, we conclude that 
\[ \int_{\R^d} \|\nabla \psi \|_*^{p'} \, \d \mu_\eps = W_p^{p}(\nu, \mu_{\eps}).\]
Then, since by \eqref{eq:phiest2} there exists a constant $K_{p,R}>0$ such that
\[ \|\nabla \psi_{n(j)}(y)\|_*^{p'} \le K_{p,R}^{p'} (1+\|y\|^{p-1})^{p'} \le 2^{p'} K_{p,R}^{p'}(1+\|y\|^p) \in L^1(\R^d, \mu) \quad \text{ for $\mathcal{L}^d$-a.e.~$y \in \R^d$},\]
and for every $j \in \N$, we can use the dominated convergence theorem to conclude that
\[  \limsup_n \int_{\R^d} \|\nabla \psi_n \|_*^{p'} \d \mu_{\eps} = \lim_j \int_{\R^d} \|\nabla \psi_{n(j)} \|_*^{p'} \d \mu_{\eps} = \int_{\R^d} \|\nabla \psi\|_*^{p'} \, \d \mu_{\eps} = W_p^p(\nu, \mu_{\eps}).\]
This concludes the proof of the fourth claim.

\emph{Claim 5}. It holds
\[ \limsup_k |\rmD H_k|_{\star, q, \AA}(\mu) \le \zeta'(F^\eps_\nu(\mu))
  W_p^{p/p'}(\nu, \mu_\eps) \quad \text{ for $\mm$-a.e. } \mu \in \prbt.\]
\emph{Proof of claim 5}. Let $B \subset \prbt$ be defined as
\[ B:= \bigcap_k \bigcup_{h=1}^k A_h^k,\]
where $A_h^k$ is the full $\mm$-measure subset of $B_h^k$ where claim 2 holds. Notice that $B$ has full $\mm$-measure. Let $\mu \in B$ be fixed and let us pick an increasing sequence $k \mapsto h_k$ such that 
\[ G_k(\mu) = \int_{\R^d} u_{h_k} \d \mu_\eps.\] 
By claim 1 we know that $G_n(\mu) \to F_\nu^\eps(\mu)$ so that we can apply claim 4 and conclude that 
\[ (\zeta'(F^\eps_\nu(\mu)))^{p'}   W_p^{p}(\nu, \mu_\eps) \ge
  \limsup_{k} (\zeta'(G_k(\mu)))^{p'}\int_{\R^d} \left \|\nabla ( \varphi_{h_k}^* \ast \kappa_\eps ) \right \|_*^{p'} \d \mu.\]
By claim 2, the right hand side is greater than $\limsup_k |\rmD H_k|_{\star, q, \AA}^{p'}(\mu)$; this concludes the proof of the fifth claim.

Eventually, we observe that by Claim 1
\begin{equation}
  \label{eq:172}
   (\zeta \circ F_\nu^\eps)(\mu) =\lim_{k\to\infty} (\zeta \circ G_k)(\mu)=\lim_{k\to\infty}H_k(\mu) \quad \text{ for every } \mu \in \prbt.
\end{equation}
Moreover, it is clear that 
\[ (\zeta'(F^\eps_\nu(\mu)))^{p'}   W_p^{p}(\nu, \mu_\eps)\]
is uniformly bounded. We can then combine the expression of Claim 2, the uniform estimate of
Claim 3, the limit of Claim 5 with Lemma \ref{le:limsup-approximation} to get \eqref{eq:174}.
\end{proof}
Precisely as in \cite[Corollary 4.18]{FSS22} we get the following corollary (its proof can be easily adapted and thus omitted).
\begin{corollary}
  \label{cor:bound} Let $\nu \in \prob_p^r(\R^d)$ be such that $\supp{\nu}=\overline{\rmB_{\|\cdot\|}(0,R)}$ and $\nu \mres \rmB_{\|\cdot\|}(0,R) \ge \delta \mathcal{L}^d \mres \rmB_{\|\cdot\|}(0,R)$ for some $\delta, R>0$. Then 
  \begin{equation}
|\rmD W_\nu|_{\star, q, \AA}(\mu) \le 1\quad \text{for $\mm$-a.e. } \mu \in
  \prbt.\label{eq:196}
\end{equation}
\end{corollary}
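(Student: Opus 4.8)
The plan is to deduce \eqref{eq:196} from Proposition~\ref{prop:fund} by composing $F_\nu^\eps$ with suitable cut-off functions that turn the bound $\zeta'(F_\nu^\eps)\,W_p^{p-1}(\nu,\mu_\eps)$ into $1$, and then to remove the $\kappa_\eps$-regularization in the limit $\eps\downarrow 0$. First I would fix $\eps\in(0,1)$ and, for every $n\in\N$, choose a nondecreasing function $\zeta_n\in\rmC^1(\R)$ whose derivative has compact support and which satisfies
\[ \zeta_n(s)=\big(p(s+\tfrac1n)\big)^{1/p}-\big(\tfrac pn\big)^{1/p}\ \ \text{for }s\in[0,n],\qquad \zeta_n'(s)\le\big(p(s+\tfrac1n)\big)^{1/p-1}\ \ \text{for all }s\ge0, \]
$\zeta_n$ being constant for $s$ large; such a $\rmC^1$ function exists precisely because the shift by $\tfrac1n$ cures the singularity of the derivative of $s\mapsto(ps)^{1/p}$ at the origin. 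Since $F_\nu^\eps(\mu)=\tfrac1p W_p^p(\mu_\eps,\nu)$, so that $W_p^{p-1}(\nu,\mu_\eps)=(pF_\nu^\eps(\mu))^{(p-1)/p}$, and since for $s\ge0$ one has the elementary identity $\big(p(s+\tfrac1n)\big)^{1/p-1}(ps)^{(p-1)/p}=\big(\tfrac{s}{s+1/n}\big)^{(p-1)/p}\le1$, Proposition~\ref{prop:fund} applied with $\zeta=\zeta_n$ yields
\[ |\rmD(\zeta_n\circ F_\nu^\eps)|_{\star,q,\AA}(\mu)\le\zeta_n'(F_\nu^\eps(\mu))\,\big(pF_\nu^\eps(\mu)\big)^{(p-1)/p}\le1\qquad\text{for $\mm$-a.e. }\mu\in\prbt. \]

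Next I would let $n\to\infty$. For each $\mu$ one has $F_\nu^\eps(\mu)\le n$ eventually, so $\zeta_n\circ F_\nu^\eps(\mu)\to(pF_\nu^\eps(\mu))^{1/p}=W_\nu^\eps(\mu)$, and since $\mm$ is finite this gives $\zeta_n\circ F_\nu^\eps\to W_\nu^\eps$ in $\mm$-measure. The minimal relaxed gradients $|\rmD(\zeta_n\circ F_\nu^\eps)|_{\star,q,\AA}$ are all $\le1$, hence bounded in $L^q(\prbt,\mm)$; extracting a weakly convergent subsequence with limit $\bar G\le1$ and invoking the closedness of the set $S$ in Theorem~\ref{thm:omnibus}(1) (with respect to convergence in $\mm$-measure times weak $L^q$-convergence), the pair $(W_\nu^\eps,\bar G)$ belongs to $S$, so $W_\nu^\eps\in D^{1,q}(\prbt,W_p,\mm;\AA)$ and, by pointwise minimality (Theorem~\ref{thm:omnibus}(3)), $|\rmD W_\nu^\eps|_{\star,q,\AA}\le1$ $\mm$-a.e. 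It is essential here to work with $D^{1,q}$ and not $H^{1,q}$, since $W_\nu^\eps$ need not belong to $L^q(\mm)$.

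Finally I would let $\eps\downarrow 0$. For every $\mu\in\prbt$ one has $W_\nu^\eps(\mu)=W_p(\mu_\eps,\nu)\to W_p(\mu,\nu)=W_\nu(\mu)$ (as $W_p(\mu,\mu_\eps)\to0$), hence $W_\nu^{1/n}\to W_\nu$ in $\mm$-measure; the corresponding relaxed gradients are again $\le1$, hence bounded in $L^q(\mm)$, so a further application of Theorem~\ref{thm:omnibus}(1) and of pointwise minimality gives $W_\nu\in D^{1,q}(\prbt,W_p,\mm;\AA)$ with $|\rmD W_\nu|_{\star,q,\AA}\le1$ $\mm$-a.e., which is exactly \eqref{eq:196}.

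The only genuinely delicate point I expect is the construction of the cut-offs $\zeta_n$: one must reconcile admissibility ($\rmC^1$, nondecreasing, compactly supported derivative) with the two requirements that $\zeta_n\circ F_\nu^\eps$ approximate $W_\nu^\eps$ and that the bound of Proposition~\ref{prop:fund} collapse to~$1$. The $\tfrac1n$-shift regularizing the $1/p$-th power at $0$, the crude interpolation near $s=n$ (harmless since for $\mm$-a.e. $\mu$ one is eventually outside that window), and the freedom to work throughout with $D^{1,q}$ rather than $H^{1,q}$ are precisely what make the argument go through for a general exponent $p$, in complete analogy with the case $p=2$ of \cite{FSS22}.
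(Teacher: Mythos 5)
Your proof is correct and follows essentially the route the paper has in mind: the paper omits the proof of Corollary~\ref{cor:bound}, pointing to the analogous \cite[Corollary~4.18]{FSS22}, and your argument is precisely the natural adaptation to general $p$. The $\tfrac1n$-shifted $p$-th root $\zeta_n(s)=(p(s+\tfrac1n))^{1/p}-(p/n)^{1/p}$ makes the composition admissible in Proposition~\ref{prop:fund} while collapsing the bound $\zeta_n'(F^\eps_\nu)\,W_p^{p-1}(\nu,\mu_\eps)=\big(F^\eps_\nu/(F^\eps_\nu+\tfrac1n)\big)^{(p-1)/p}$ to at most $1$, and the two successive limit passages ($n\to\infty$, then $\eps\downarrow0$) via Theorem~\ref{thm:omnibus}(1)(3) are exactly what is needed; your remark that one must work in $D^{1,q}$ rather than $H^{1,q}$, since $W^\eps_\nu$ and $W_\nu$ need not be $q$-integrable without moment assumptions on $\mm$, correctly identifies the one point where care is required.
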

The following theorem provides the main result of this subsection. Recall the notation for Wasserstein spaces as in Section \ref{sec:Wasserstein} and the definition of density in $q$-energy as in Definition \ref{def:density}.
\begin{theorem}
  \label{thm:main} Let $d \in \N$,  let $p,q \in (1,+\infty)$ be (not necessarily conjugate) fixed exponents  and let $\|\cdot\|$ be any norm on $\R^d$. Then the algebra $\ccyl{\prob(\R^d)}{\rmC_b^1(\R^d)}$ is dense in $q$-energy in $D^{1,p}(\prob_p(\R^d, \sfd_{\|\cdot\|}), W_{p, \sfd_{\|\cdot\|}}, \mm)$, where $\sfd_{\|\cdot\|}$ is the distance induced by the norm $\|\cdot\|$.
\end{theorem}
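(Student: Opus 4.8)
The plan is to apply Theorem~\ref{theo:startingpoint} with $X=\prob_p(\R^d)$, $\sfd=W_{p,\sfd_{\|\cdot\|}}$ and $\AA=\ccyl{\prob(\R^d)}{\rmC_b^1(\R^d)}$: it is enough to exhibit a $W_{p,\sfd_{\|\cdot\|}}$-dense set $Y\subset\prob_p(\R^d)$ such that, for every $\nu\in Y$, the function $W_\nu:=W_{p,\sfd_{\|\cdot\|}}(\cdot,\nu)$ lies in $D^{1,q}(\prob_p(\R^d),W_{p,\sfd_{\|\cdot\|}},\mm;\AA)$ with $|\rmD W_\nu|_{\star,q,\AA}\le 1$ $\mm$-a.e. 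I first settle the case in which $\|\cdot\|$ satisfies \eqref{eq:costf}. Let $\mathcal N_{\|\cdot\|}$ be the family of $\nu\in\prob_p^r(\R^d)$ with $\supp\nu=\overline{\rmB_{\|\cdot\|}(0,R)}$ and $\nu\mres\rmB_{\|\cdot\|}(0,R)\ge\delta\,\mathcal L^d\mres\rmB_{\|\cdot\|}(0,R)$ for some $\delta,R>0$. This family is $W_p$-dense in $\prob_p(\R^d)$: given $\sigma$, a mollification followed by a truncation to a large ball produces an absolutely continuous, compactly supported $\hat\sigma$ with $W_p(\hat\sigma,\sigma)$ arbitrarily small, and then $\nu:=(1-\theta)\hat\sigma+\theta\,\mathcal L^d\mres\rmB_{\|\cdot\|}(0,R)\big/\mathcal L^d(\rmB_{\|\cdot\|}(0,R))$ belongs to $\mathcal N_{\|\cdot\|}$ and satisfies $W_p(\nu,\hat\sigma)\le 2R\,\theta^{1/p}$ as soon as $R$ is large enough to contain $\supp\hat\sigma$ and $\theta$ is small. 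For $\nu\in\mathcal N_{\|\cdot\|}$, Corollary~\ref{cor:bound} gives $W_\nu\in D^{1,q}(\cdot;\AA)$ with $|\rmD W_\nu|_{\star,q,\AA}\le 1$, so $Y=\mathcal N_{\|\cdot\|}$ works and Theorem~\ref{theo:startingpoint} concludes this case (which is the ``first part'' of the theorem).

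For an arbitrary norm $\|\cdot\|$, choose norms $\|\cdot\|_k$ on $\R^d$ satisfying \eqref{eq:costf} and $(1-\eps_k)\|x\|\le\|x\|_k\le(1+\eps_k)\|x\|$ for all $x$, with $\eps_k\downarrow 0$ (such norms exist by a standard smoothing of the unit ball). Then $(1-\eps_k)W_{p,\sfd_{\|\cdot\|}}\le W_{p,\sfd_{\|\cdot\|_k}}\le(1+\eps_k)W_{p,\sfd_{\|\cdot\|}}$ on $\prob_p(\R^d)$, so $\lip_{W_{p,\sfd_{\|\cdot\|}}}g(\mu)\le(1+\eps_k)\lip_{W_{p,\sfd_{\|\cdot\|_k}}}g(\mu)$ for every function $g$ and every $\mu$, while the metrics $W_{p,\sfd_{\|\cdot\|_k}}$ and $W_{p,\sfd_{\|\cdot\|}}$ induce the same topology, the same convergence in $\mm$-measure, and the same weak $L^q(\mm)$ topology. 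Fix $\nu\in\prob_p(\R^d)$. For each $k$, use the first part applied to $\|\cdot\|_k$ to pick $\nu_k\in\mathcal N_{\|\cdot\|_k}$ with $W_{p,\sfd_{\|\cdot\|}}(\nu_k,\nu)<1/k$; by Corollary~\ref{cor:bound}, $w_k:=W_{p,\sfd_{\|\cdot\|_k}}(\cdot,\nu_k)$ lies in $D^{1,q}(\prob_p(\R^d),W_{p,\sfd_{\|\cdot\|_k}},\mm;\AA)$ with $g_k:=|\rmD w_k|_{\star,q,\AA}\le 1$ $\mm$-a.e., and the strong approximation property (Theorem~\ref{thm:omnibus}(2), in the metric $W_{p,\sfd_{\|\cdot\|_k}}$) yields $G_k\in\AA$ with $\mm(\{|G_k-w_k|>1/k\})<1/k$ and $\|\lip_{W_{p,\sfd_{\|\cdot\|_k}}}G_k-g_k\|_{L^q(\mm)}<1/k$. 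Since $\mm$ is finite, $W_\nu$ is finite everywhere and $|w_k(\mu)-W_\nu(\mu)|\le\eps_k(W_\nu(\mu)+1/k)+1/k$, one gets $G_k\to W_\nu$ in $\mm$-measure; moreover $\lip_{W_{p,\sfd_{\|\cdot\|}}}G_k\le(1+\eps_k)\lip_{W_{p,\sfd_{\|\cdot\|_k}}}G_k\le 1+s_k$ with $\|s_k\|_{L^q(\mm)}\to 0$. Extracting a subsequence along which $\lip_{W_{p,\sfd_{\|\cdot\|}}}G_k\weakto\tilde G$ weakly in $L^q(\mm)$, the pointwise bound $\lip_{W_{p,\sfd_{\|\cdot\|}}}G_k-s_k\le 1$ together with the weak closedness of $\{v\le 1\}$ forces $\tilde G\le 1$ $\mm$-a.e., so $\tilde G$ is a $(q,\AA)$-relaxed gradient of $W_\nu$ in the metric $W_{p,\sfd_{\|\cdot\|}}$ and $|\rmD W_\nu|_{\star,q,\AA}\le 1$ $\mm$-a.e. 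As $\nu$ was arbitrary, $Y=\prob_p(\R^d)$ works and Theorem~\ref{theo:startingpoint} gives the claim.

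The delicate point is the bookkeeping in the second part: both the approximating function $w_k=W_{p,\sfd_{\|\cdot\|_k}}(\cdot,\nu_k)$ and the metric in which its relaxed gradient is controlled vary with $k$, so one must carefully combine the bi-Lipschitz comparison of the Wasserstein distances (with constants tending to $1$), the cylinder-function strong approximation provided by Corollary~\ref{cor:bound}, a diagonal extraction, and the weak $L^q(\mm)$ closedness of $\{v\le 1\}$; the remaining ingredient, namely the existence of norms satisfying \eqref{eq:costf} that approximate $\|\cdot\|$ uniformly on the unit sphere, is a classical smoothing fact for convex bodies. Everything else reduces to the already established Corollary~\ref{cor:bound} (whose proof carries the genuine optimal-transport difficulties) and Theorem~\ref{theo:startingpoint}.
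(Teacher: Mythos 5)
Your proposal is correct, and its first step (density of the class $\mathcal N_{\|\cdot\|}$ of absolutely continuous measures supported on closed balls with a Lebesgue lower bound, combined with Corollary~\ref{cor:bound} and Theorem~\ref{theo:startingpoint}) coincides with the paper's argument for norms satisfying \eqref{eq:costf}. The extension to arbitrary norms is, however, carried out by a genuinely different route. The paper constructs a \emph{monotone} increasing approximation $\|\cdot\|_k \uparrow \|\cdot\|$ (so that $W_{p,\sfd_{\|\cdot\|_k}} \le W_{p,\sfd_{\|\cdot\|}}$ and hence $\CE_{q,\AA,W_{p,\sfd_{\|\cdot\|}}} \le \CE_{q,\AA,W_{p,\sfd_{\|\cdot\|_k}}}$ on cylinder functions), invokes an abstract stability result for pre-Cheeger energies under monotone convergence of distances (Proposition 3.3 of the cited work of Lu\v{c}i\'{c}--Pasqualetto) to produce, for a given bounded Lipschitz $F$, approximants $F_n$ with $\pCE_{q,W_{p,k_n}}(F_n) \le \pCE_{q,W_{p,\sfd_{\|\cdot\|}}}(F) + 1/n$, and concludes $\CE_{q,\AA} \le \pCE_q$; density follows via the relaxation identity (Remark~\ref{rem:relprec} and Remark~\ref{rem:dense2}). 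You instead use a \emph{two-sided bi-Lipschitz} approximation $(1-\eps_k)\|\cdot\|\le\|\cdot\|_k\le(1+\eps_k)\|\cdot\|$ and verify the criterion of Theorem~\ref{theo:startingpoint} directly for every $\nu$: you produce cylinder functions $G_k$ converging in $\mm$-measure to $W_\nu$ with $\lip_{W_{p,\sfd_{\|\cdot\|}}}G_k \le 1+s_k$, $s_k\to 0$ in $L^q(\mm)$, extract a weak limit $\tilde G\le 1$, and conclude via Definition~\ref{def:relgrad} and pointwise minimality. The bookkeeping you describe (the approximating function, the metric on $\prob_p(\R^d)$, and the reference point $\nu_k$ all vary simultaneously with $k$) is handled correctly, and the weak closedness of $\{v\le 1\}$ is applied to $\lip_{W_{p,\sfd_{\|\cdot\|}}}G_k - s_k$, which is sound. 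Your route is more elementary and self-contained, at the price of requiring a two-sided quantitative approximation of the norm rather than a one-sided monotone one and at the price of repeating the extraction/weak-limit argument that the paper packages into a single appeal to an external stability theorem; the paper's route is shorter but less self-contained. Both are valid.
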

\begin{proof} Let us first suppose that $\|\cdot\|$ is a norm as in \eqref{eq:costf}. Then the density result follows by Corollary \ref{cor:bound} and Theorem \ref{theo:startingpoint}, since the set of $\nu \in \prob_p^r(\R^d)$ such that there exist $R,\delta>0$ for which $\supp(\nu)= \overline{\rmB_{\|\cdot\|}(0,R)}$ and $\nu \mres \rmB_{\|\cdot\|}(0,R) \ge \delta \Leb d\mres\rmB_{\|\cdot\|}(0,R)$ is dense in $\prob_p(\R^d)$. This can be seen defining for every $\eps \in (0,1)$ the measures 
\[ \hat{\nu}_\eps := \frac{\nu_\eps \mres \rmB_{\|\cdot\|}(0,1/\eps) +
                      \eps^{d+p+1} \mathcal{L}^d \mres
                      \rmB_{\|\cdot\|}(0,1/\eps)}{\nu_\eps(\rmB_{\|\cdot\|}(0,1/\eps)) +
                      \eps^{d+p+1} \mathcal{L}^d(\rmB_{\|\cdot\|}(0,1/\eps))},\]
so that $\hat{\nu}_\eps \to \nu$ in $\prob_p(\R^d)$, $\supp(\nu) = \overline{\rmB_{\|\cdot\|}(0, 1/\eps)}$ and $\hat \nu^\eps \mres \rmB_{\|\cdot\|}(0,1/\eps) \ge \frac{\eps^{d+p+1}}{1+\eps^{d+p+1} \omega_{\eps,d}}\Leb d\mres\rmB_{\|\cdot\|}(0,1/\eps)$, where $\omega_{\eps,d}$ is the $\mathcal{L}^d$-measure of $\rmB_{\|\cdot\|}(0,1/\eps)$. 

Let us suppose now that $\|\cdot\|$ is any norm on $\R^d$. We can construct a sequence of norms $(\|\cdot\|_{k})_k$ on $\R^d$ satisfying \eqref{eq:costf} and such that
\[ \|x\|_k \uparrow \|x\| \quad \text{ for every } x \in \R^d.\]
For example we can take as $\|\cdot\|_k$ the norm whose unit ball is given by the set 
\[ C_k:= \left \{ x \in \R^d : \frac{\|x\|+|x|^2/k}{1+\eta^2/k} \right \} + \rmB_{\|\cdot\|}(0,1/k),\]
where $|\cdot|$ is the Euclidean norm on $\R^d$ and $\eta>0$ is a constant such that $|x| \le \eta \|x\|$ for every $x \in \R^d$. Notice that by construction $C_k$ is strictly convex, it contains the $\|\cdot\|$-unit ball and it satisfies the $1/k$-ball condition so that  its boundary is $\rmC^{1,1}$ (see \cite[Definition 1.1 and Theorem 1.8]{dalphin}). Also by construction we have that $\|\cdot\|_k \uparrow \|\cdot\|$ as $k \to +\infty$.

Let us denote by $W_{p,k}$ the $(p, \sfd_{\|\cdot\|_k})$-Wasserstein distance on $\prob_p(\R^d)=\prob_p(\R^d, \sfd_{\|\cdot\|})=\prob_p(\R^d, \sfd_{\|\cdot\|_k})$. We have that
\[ W_{p,k}(\mu, \nu) \uparrow W_{p, \sfd_{\|\cdot\|}}(\mu, \nu) \quad \text{ for every } \mu, \nu \in \prob_p(\R^d)\]
and, being the norms $\|\cdot\|$ and $\|\cdot\|_k$ equivalent, then the distances $W_{p,k}$ and $W_{p, \sfd_{\|\cdot\|}}$ induce the same topology on $\prob_p(\R^d)$. Let $F \in \Lipb(\prob_p(\R^d), W_{p,\sfd_{\|\cdot\|}})$; by \cite[Proposition 3.3]{pasqualetto} (see also \cite[Theorem 9.1]{aes}), we can find a subsequence $(k_n)_n$ such that, for every $n \in \N$, there exists $F_n \in \Lipb(\prob_p(\R^d), W_{p,k_n})$ such that\footnote{Here we are adding a subscript both in the notation for the Cheeger and the pre-Cheeger energy to specify the distance w.r.t.~which they are computed.}
\[ \|F-F_n\|_{L^q(\prob_p(\R^d), \mm)}< 1/n, \quad \pCE_{q,W_{p,k_n}}(F_n) \le \pCE_{q,W_{p, \sfd_{\|\cdot\|}}}(F) + 1/n.\]
Since $\AA= \ccyl{\prob(\R^d)}{\rmC_b^1(\R^d)}$ is dense in $q$-energy in $D^{1,p}(\prob_p(\R^d, \sfd_{\|\cdot\|_{k_n}}), W_{p,{k_n}}, \mm)$ by the first part of the proof, we have that
\[ \CE_{q,\AA, W_{p,{\sfd_{\|\cdot\|}}}}(F_{n}) \le \CE_{q,\AA, W_{p,{k_n}}}(F_{n}) \le \pCE_{q,W_{p,k_n}}(F_n) \le \pCE_{q,W_{p, \sfd_{\|\cdot\|}}}(F) + 1/n \text{ for every } n \in \N.\]
Passing to the $\liminf_n$ we get that 
\[ \CE_{q,\AA, W_{p,{\sfd_{\|\cdot\|}}}}(F) \le \pCE_{q,W_{p, \sfd_{\|\cdot\|}}}(F),\]
which gives the desired density, since this entails equality of the Cheeger energies and thus of the minimal relaxed gradients (see also Remark \ref{rem:dense2}).
\end{proof}
Arguing precisely as in \cite[Proposition 4.19]{FSS22}, it is not difficult to see that we can obtain the density result also for smaller algebras of functions on $\R^d$.
\begin{proposition}\label{prop:morealg} Let $d \in \N$, $p,q \in (1,+\infty)$ and let $\|\cdot\|$ be any norm on $\R^d$. Let $\EE \subset \rmC_b^1(\R^d)$ be an algebra of functions such that for every $f \in \rmC_b^1(\R^d)$ there exists a sequence $(f_n)_n \subset \EE$ such that
\[ \sup_{\R^d}|f_n|+ |\nabla f_n| < + \infty, \quad \lim_{n \to + \infty} \int_{\R^d} \left ( |f_n-f| + |\nabla f_n - \nabla f| \right ) \d \mu =0 \text{ for $\mm$-a.e.~}\mu \in \prob_p(\R^d, \sfd_{\|\cdot\|}).\]
Then the algebra $\ccyl{\prob(\R^d)}{\EE}$ is dense in $q$-energy in $D^{1,p}(\prob_p(\R^d, \sfd_{\|\cdot\|}), W_{p, \sfd_{\|\cdot\|}}, \mm)$.
\end{proposition}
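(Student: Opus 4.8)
The plan is to deduce the statement from the density result already established for the larger algebra. Set $\AA := \ccyl{\prob(\R^d)}{\rmC_b^1(\R^d)}$ and $\EE' := \ccyl{\prob(\R^d)}{\EE}$. Since $\EE \subset \rmC_b^1(\R^d)$, every $\EE$-cylinder function is in particular an $\rmC_b^1(\R^d)$-cylinder function, so $\EE' \subseteq \AA$; hence every $(q,\EE')$-relaxed gradient is also a $(q,\AA)$-relaxed gradient, and therefore $\CE_{q,\EE'} \ge \CE_{q,\AA} = \CE_q$, the last equality being Theorem \ref{thm:main} combined with Remark \ref{rem:dense2}. By Remark \ref{rem:dense2} again, it is then enough to prove the reverse inequality $\CE_{q,\EE'} \le \CE_q$. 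For this I would first invoke Remark \ref{rem:relprec} together with Theorem \ref{thm:main}: given $f \in D^{1,q}(\prbt, W_p, \mm) \cap L^q(\prbt, \mm)$, there is a sequence $F_n \in \AA$ with $F_n \to f$ in $L^q(\prbt, \mm)$ and $\pCE_q(F_n) \to \CE_q(f)$. Using the relaxation characterisation of $\CE_{q,\EE'}$ in Remark \ref{rem:relprec} and a routine diagonal argument, the proof of $\CE_{q,\EE'}(f) \le \CE_q(f)$ then reduces to the following claim: every fixed $F \in \AA$ admits a sequence $(G_k)_k \subset \EE'$ with $G_k \to F$ in $L^q(\prbt, \mm)$ and $\pCE_q(G_k) \to \pCE_q(F)$.

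To prove the claim I would write $F = \psi \circ \lin\pphi$ with $\psi \in \rmC_b^1(\R^N)$ and $\pphi = (\phi_1, \dots, \phi_N) \in (\rmC_b^1(\R^d))^N$, and use the hypothesis on $\EE$ to pick, for each $i$, functions $\phi_i^k \in \EE$ satisfying $C := \sup_{k,i}\big(\|\phi_i^k\|_\infty + \|\nabla \phi_i^k\|_\infty\big) < +\infty$ and $\int_{\R^d}\big(|\phi_i^k - \phi_i| + |\nabla\phi_i^k - \nabla\phi_i|\big)\,\d\mu \to 0$ for every $\mu$ outside a fixed $\mm$-null set $Z$ (the union of the finitely many exceptional sets, one for each of the $\phi_i$), and then set $G_k := \psi \circ \lin{(\phi_1^k, \dots, \phi_N^k)} \in \EE'$. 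For $\mu \notin Z$ one has $\lin{(\phi_1^k, \dots, \phi_N^k)}(\mu) \to \lin\pphi(\mu)$ in $\R^N$, hence $G_k(\mu) \to F(\mu)$ and $\partial_n \psi\big(\lin{(\phi_1^k, \dots, \phi_N^k)}(\mu)\big) \to \partial_n \psi\big(\lin\pphi(\mu)\big)$ for every $n$; moreover $\big(\int_{\R^d}\|\nabla\phi_n^k - \nabla\phi_n\|_*^{p'}\,\d\mu\big)^{1/p'} \to 0$, since $|\nabla\phi_n^k - \nabla\phi_n| \to 0$ in $L^1(\mu)$, is bounded by $2C$, and $p' = p/(p-1) \in (1,+\infty)$ (all norms on $\R^d$ being equivalent). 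Combining these facts with the definitions of $\rmD_p G_k$ and $\rmD_p F$ yields $\|\rmD_p G_k[\mu] - \rmD_p F[\mu]\|_{*,p',\mu} \to 0$, so that, by Proposition \ref{prop:equality},
\[ \lip_{W_p} G_k(\mu) = \|\rmD_p G_k[\mu]\|_{*,p',\mu} \longrightarrow \|\rmD_p F[\mu]\|_{*,p',\mu} = \lip_{W_p} F(\mu) \qquad \text{for every } \mu \notin Z . \]
Finally, $|G_k| \le \|\psi\|_\infty$ and, thanks to the uniform bound $C$ and to $\psi \in \rmC_b^1$, $\lip_{W_p} G_k \le C'$ on $\prbt$ for some constant $C'$ independent of $k$; since $\mm$ is finite, dominated convergence gives $G_k \to F$ in $L^q(\prbt, \mm)$ and $\pCE_q(G_k) = \int_{\prbt}\big(\lip_{W_p} G_k\big)^q\,\d\mm \to \int_{\prbt}\big(\lip_{W_p} F\big)^q\,\d\mm = \pCE_q(F)$, which proves the claim.

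I do not expect a genuine obstacle: this is essentially the argument of \cite[Proposition 4.19]{FSS22} transplanted to the present setting. The only points that need a little care are the upgrade from $L^1(\mu)$- to $L^{p'}(\mu)$-convergence of the gradients (which rests on the uniform bounds and on the equivalence of all norms on $\R^d$), the replacement of the exceptional null sets provided by the hypothesis by their finite — hence still $\mm$-null — union, and checking the applicability of Proposition \ref{prop:equality} here, which is legitimate because $\R^d$ endowed with an arbitrary norm is a separable Banach space and that proposition does not use the regularity assumption \eqref{eq:costf}.
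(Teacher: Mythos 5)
Your proposal is correct and follows exactly the route the paper intends: the paper gives no written proof for Proposition \ref{prop:morealg} beyond the citation "Arguing precisely as in \cite[Proposition 4.19]{FSS22}," and that referenced argument is precisely the one you have reconstructed (reduce to $\CE_{q,\EE'}\le \CE_q$ via the relaxation characterisation of Remark \ref{rem:relprec}, then approximate a fixed $\rmC^1_b$-cylinder function by $\EE$-cylinder functions component by component, using Proposition \ref{prop:equality} together with the uniform bounds to upgrade the $L^1(\mu)$-convergence of the gradients to $L^{p'}(\mu)$-convergence and pass to the limit in $\pCE_q$ by dominated convergence). All the auxiliary points you flag — the upgrade of convergence exponents via boundedness and norm equivalence, the finite union of exceptional $\mm$-null sets, and the applicability of Proposition \ref{prop:equality} without hypothesis \eqref{eq:costf} — are handled correctly.
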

\begin{remark} A possible choice for $\EE$ in Proposition \ref{prop:morealg} is
\begin{equation}\label{eq:poly}
\mathscr{P}_{d,C} := \left \{ \varphi \in \rmC_b^1(\R^d) : \varphi \bigr |_{[-C,C]^d} \text{ is a polynomial } \right \}
\end{equation}
where $C>0$ is any positive constant.
\end{remark}

\subsection{The density result in \texorpdfstring{$(X, \sfd)$}{X}}\label{sec:densxd}
In this subsection we prove the density in energy of suitable cylinder functions for an arbitrary separable metric space $(X,\sfd)$. We consider on $(X,\sfd)$ a sequence $\Phi:= (\phi_k)_k \subset \Lipb(X,\sfd)$ such that
\begin{equation}\label{eq:compseq}
    \sfd(x,y)= \sup_{\phi \in \Phi} |\phi(x)-\phi(y)| = \sup_k |\phi_k(x)-\phi_k(y)| \quad \text{ for all } x,y \in (X,\sfd),
\end{equation}
and we define
\begin{equation}\label{eq:phid}
\Phi(d):= \max_{1 \le k \le d} \|\phi_k\|_\infty, \quad d \in \N.
\end{equation}
We will have to embed $(X, \sfd)$ into $\ell^\infty(\N)$ and, for this reason, we need to fix some related notation.

We consider the family of maps $E:= \{e_i\}_{i \in \N} \subset (\ell^\infty(\N))^*$, where for every $i \in \N$, $e_i$ is defined as
\[ e_i:\ell^\infty(\N) \to \R, \quad e_i((x_k)_k)=x_i \quad \text{ for every } (x_k)_k \in \ell^\infty(\N).\]
We consider
the collection of maps 
$\pi_d:\ell^{\infty}(\N) \to \R^d$, $d\in \N$, given by
\begin{equation}
  \label{eq:91}
  \pi^d((x_k)_k):=(e_1((x_k)_k), \dots, e_d((x_k)_k)=(x_1, \dots, x_d), \quad \text{ for every } (x_k)_k \in \ell^\infty(\N).
\end{equation}
The adjoint map $\pi^{d*}:\R^d\to(\ell^\infty(\N))^*$ is given by
\begin{equation}
  \label{eq:103}
  \pi^{d*}(y_1,\cdots,y_d):= \sum_{i=1}^d y_i e_i, \quad \text{ for every } (y_1, \dots, y_d) \in \R^d.
\end{equation}
We say that a function $\phi:\ell^{\infty}(\N)\to\R$ belongs to $\rmC^1_b(\ell^{\infty}(\N),E,\Phi)$ if it can be
written as
\begin{equation}
  \label{eq:95}
  \phi:=\varphi\circ \pi^d\quad\text{for some }d\in \N,\ \varphi\in \mathscr{P}_{d,\Phi(d)},
\end{equation}
where $\mathscr{P}_{d,\Phi(d)}$ is as in \eqref{eq:poly} and $\Phi(d)$ is as in \eqref{eq:phid}.
Clearly $\phi\in\rmC^1_b(\ell^{\infty}(\N))$ and its gradient $\nabla \phi$ can be written as
\begin{equation}
  \label{eq:96}
  \nabla\phi=\pi^{d*}\circ \nabla\varphi\circ\pi^d,\quad
  \nabla\phi(x)= 
  \sum_{i=1}^d \partial_i\varphi(\pi^d(x)) e_i, \quad x \in \ell^{\infty}(\N).
\end{equation}
On $\prob(\ell^{\infty}(\N))$, we consider the algebra of cylinder functions $\AA':=\ccyl{\prob(\ell^\infty(\N))}{\rmC^1_b(\ell^{\infty}(\N),E, \Phi)}$ (recall Definition \ref{def:cyl}). For every $F\in \AA'$ we can find $N\in \N$, a function
$\psi\in \rmC^1_b(\R^N)$ and functions $\phi_n  \in
\rmC^1_b(\ell^{\infty}(\N),E, \Phi)$,  $n=1,\cdots,N$,   such that
\begin{equation}
  \label{eq:93}
  F(\mu) =(\psi \circ \lin \pphi)(\mu) \quad \text{ for every } \mu \in \prob(\ell^\infty(\N)).
\end{equation}
It is also easy to check that a function $F$ belongs to
$\AA'$
if and only if there exists $d\in \N$ and $\tilde F\in
\ccyl{\prob(\R^d)}{\mathscr{P}_{d,\Phi(d)}}$ such that
\begin{equation}
  \label{eq:102}
  F(\mu)=\tilde F(\pi^d_\sharp(\mu))\quad\text{for every }\mu\in \prob(\ell^\infty(\N)),
\end{equation}
so that, by Proposition \ref{prop:equality}, we have
\begin{align}
\rmD_p F(\mu,x)&=\pi^{d*}\Big(\rmD_p\tilde
  F(\pi^d_\sharp\mu,\pi^d(x))\Big),\\ \label{eq:104}
   \|\rmD_p F[\mu]\|_{*,p',\mu}&=\|\rmD_p \tilde F[\pi^d_\sharp\mu]\|_{*,p',\pi^d_\sharp\mu} = \lip_{W_{p,\sfd_{\|\cdot\|_{d, \infty}}}} \tilde F (\pi^d_\sharp\mu),
\end{align}
where we are using $\|\cdot\|_{*, \infty}$, the dual norm of $\|\cdot\|_\infty$ (the norm in $\ell^\infty(\N)$), for $\rmD_p F[\mu](x)$, $x \in \ell^\infty(\N)$, and the norm $\|y\|_{*,d}:=\|\pi^{d,*}(y)\|_{*,\infty}$ for $\rmD_p \tilde{F}[\pi_\sharp^d \mu](y)$, $y \in \R^d$. Notice that $\|\cdot\|_{*,d}$ is the $1$-norm on $\R^d$; in particular, the dual norm $\|\cdot\|_{d, \infty}$ on $\R^d$ of $\|\cdot\|_{*,d}$ is the $\infty$-norm on $\R^d$. The proof of the following theorem combines the finite dimensional projections technique of \cite[Theorem 6.4]{FSS22} with a standard embedding strategy.
\begin{theorem}
  \label{thm:main3} Let $(X,\sfd)$ be a complete and separable metric space and let $\{\phi_k\}_{k \in \N} \subset \Lipb(X,\sfd)$ be a countable set of functions such that 
  \[ \sfd(x,y) = \sup_{k \in \N} |\phi_k(x)-\phi_k(y)|, \quad \text{ for every } x,y \in X,\]
  and let $\EE \subset \Lipb(X,\sfd)$ be the smallest unital subalgebra of functions on $X$ containing $\{\phi_k\}_{k \in \N}$. Finally,  let $p,q \in (1,+\infty)$ be (not necessarily conjugate) fixed exponents and  let $\mm$ be a positive and finite Borel measure on $\prob_p(X, \sfd)$; then the algebra of cylinder functions generated by $\EE$, $\ccyl{\prob(X)}{\EE}$, is dense in $q$-energy in $D^{1,q}(\prob_p(X, \sfd), W_{p, \sfd}, \mm)$.
\end{theorem}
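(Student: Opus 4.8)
The plan is to verify the criterion of Theorem~\ref{theo:startingpoint} for the Polish metric-measure space $(\prob_p(X,\sfd),W_{p,\sfd},\mm)$ and the algebra $\AA:=\ccyl{\prob(X)}{\EE}$. We have $\AA\subset\Lipb(\prob_p(X,\sfd),W_{p,\sfd})$ by \eqref{eq:123}, $\mathbbm 1\in\AA$, and $\AA$ is separating: since the $\phi_k$ separate the points of $X$ and each is bounded, the range of $x\mapsto(\phi_k(x))_k$ lies in the compact cube $\prod_k[-\|\phi_k\|_\infty,\|\phi_k\|_\infty]\subset\R^{\N}$, on which polynomials in finitely many coordinates are dense in the continuous functions, so the functionals $\lin{\phi_k}\in\AA$ already separate the points of $\prob_p(X,\sfd)$. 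Hence it suffices to show that, for every $\nu\in\prob_p(X,\sfd)$, the function $W_\nu:=W_{p,\sfd}(\cdot,\nu)$ admits a $(q,\AA)$-relaxed gradient that is $\le 1$ $\mm$-a.e.

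For $d\in\N$ let $\pi^d:X\to\R^d$, $\pi^d(x):=(\phi_1(x),\dots,\phi_d(x))$, whose range lies in $[-\Phi(d),\Phi(d)]^d$, and equip $\R^d$ with the sup-norm $\|\cdot\|_\infty$. The pseudometrics $\sfd^d(x,y):=\|\pi^d(x)-\pi^d(y)\|_\infty=\max_{1\le k\le d}|\phi_k(x)-\phi_k(y)|$ increase to $\sfd$ by \eqref{eq:compseq}; a gluing argument gives $W_{p,\sfd^d}(\mu,\mu')=W_{p,\|\cdot\|_\infty}(\pi^d_\sharp\mu,\pi^d_\sharp\mu')$ for all $\mu,\mu'$, and, extracting a narrow limit of optimal plans and using $\sfd^e\le\sfd^d\uparrow\sfd$, one checks that $W_{p,\sfd^d}\uparrow W_{p,\sfd}$ pointwise. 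Consequently $\Pi^d:\prob_p(X,\sfd)\to\prob_p(\R^d,\|\cdot\|_\infty)$, $\Pi^d(\mu):=\pi^d_\sharp\mu$, is $1$-Lipschitz, so $\lip_{W_{p,\sfd}}(g\circ\Pi^d)\le(\lip_{W_{p,\|\cdot\|_\infty}}g)\circ\Pi^d$ for every Lipschitz $g$; set $\mm^d:=\Pi^d_\sharp\mm$, a finite positive Borel measure on $\prob_p(\R^d,\|\cdot\|_\infty)$.

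Fix $\nu$ and write $W_\nu^{(d)}:=W_{p,\sfd^d}(\cdot,\nu)=g_d\circ\Pi^d$ with $g_d:=W_{p,\|\cdot\|_\infty}(\cdot,\pi^d_\sharp\nu)$. Being $1$-Lipschitz, $g_d$ lies in $D^{1,q}(\prob_p(\R^d,\|\cdot\|_\infty),W_{p,\|\cdot\|_\infty},\mm^d)$ with $|\rmD g_d|_{\star,q}\le 1$; by Theorem~\ref{thm:main} and Proposition~\ref{prop:morealg}, applied on $(\R^d,\|\cdot\|_\infty)$ with the algebra $\mathscr P_{d,\Phi(d)}$ and the measure $\mm^d$, there is a sequence $(\tilde F_j)_j\subset\ccyl{\prob(\R^d)}{\mathscr P_{d,\Phi(d)}}$ with $\tilde F_j\to g_d$ $\mm^d$-a.e.\ and $\lip_{W_{p,\|\cdot\|_\infty}}\tilde F_j\to|\rmD g_d|_{\star,q}\le 1$ strongly in $L^q(\mm^d)$. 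Each $F_j:=\tilde F_j\circ\Pi^d$ belongs to $\AA$: a polynomial of $\pi^d$ is a polynomial in $\phi_1,\dots,\phi_d$, hence an element of $\EE$, and on $X$ the truncation defining $\mathscr P_{d,\Phi(d)}$ is inactive since $\pi^d(X)\subset[-\Phi(d),\Phi(d)]^d$. Moreover $F_j\to W_\nu^{(d)}$ $\mm$-a.e., and by the Lipschitz estimate above together with the change of variables $\int(h\circ\Pi^d)^q\,\d\mm=\int h^q\,\d\mm^d$, the sequence $\lip_{W_{p,\sfd}}F_j$ is bounded in $L^q(\mm)$ with every weak limit $\le(|\rmD g_d|_{\star,q})\circ\Pi^d\le 1$. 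By Definition~\ref{def:relgrad}, $1$ is a $(q,\AA)$-relaxed gradient of $W_\nu^{(d)}$; i.e.\ $(W_\nu^{(d)},1)$ lies in the set $S$ of Theorem~\ref{thm:omnibus}(1).

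Since $W_\nu^{(d)}\uparrow W_\nu$ pointwise, hence in $\mm$-measure, and $S$ is closed under convergence in $\mm$-measure in the first variable and weak $L^q(\mm)$-convergence in the second, we get $(W_\nu,1)\in S$: thus $W_\nu$ has $1$ as a $(q,\AA)$-relaxed gradient, so $|\rmD W_\nu|_{\star,q,\AA}\le 1$ $\mm$-a.e.\ by Theorem~\ref{thm:omnibus}(3). Theorem~\ref{theo:startingpoint} then gives that $\AA=\ccyl{\prob(X)}{\EE}$ is dense in $q$-energy in $D^{1,q}(\prob_p(X,\sfd),W_{p,\sfd},\mm)$. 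The main obstacle is the handling of the pseudometrics $\sfd^d$ — proving $W_{p,\sfd^d}\uparrow W_{p,\sfd}$ and that relaxed gradients (equivalently, asymptotic Lipschitz constants) transfer correctly through the $1$-Lipschitz projections $\Pi^d$ — which is precisely where the compactness estimates for Kantorovich potentials of Subsection~\ref{subsec:Kuseful} enter through Theorem~\ref{thm:main}; equivalently, one may first embed $(X,\sfd)$ isometrically into $\ell^\infty(\N)$ via $x\mapsto(\phi_k(x)-\phi_k(x_0))_k$ and run the same argument there with the aid of \eqref{eq:104}.
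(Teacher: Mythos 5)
Your proof is correct and follows the same strategy as the paper: reduce via Theorem~\ref{theo:startingpoint} to a relaxed-gradient bound on the distance functions $W_\nu$, transfer the problem to finite-dimensional $(\R^d,\|\cdot\|_\infty)$ through the coordinate projections $\pi^d=(\phi_1,\dots,\phi_d)$, invoke Theorem~\ref{thm:main} (together with Proposition~\ref{prop:morealg} for the polynomial algebra $\mathscr P_{d,\Phi(d)}$), and pass to the limit in $d$ using the closedness of the set $S$ from Theorem~\ref{thm:omnibus}(1). The main presentational difference is that you bypass the explicit isometric embedding of $(X,\sfd)$ into $\ell^\infty(\N)$ (which is how the paper frames it, with the attendant bookkeeping about $\iota(X)$, the restriction norm $\|\cdot\|'$, and the transfer-of-Sobolev-spaces isomorphism) and instead work directly on $X$ with the pseudometrics $\sfd^d(x,y)=\max_{k\le d}|\phi_k(x)-\phi_k(y)|$, showing $W_{p,\sfd^d}\uparrow W_{p,\sfd}$ via gluing and narrow compactness. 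You also replace the paper's argument for transferring asymptotic Lipschitz constants through the projection — which goes through the differential formula \eqref{eq:104} and the non-separable-space inequality of Remark~\ref{rem:separable} — by the more elementary observation that $\Pi^d$ is $1$-Lipschitz, so $\lip_{W_{p,\sfd}}(g\circ\Pi^d)\le(\lip_{W_{p,\|\cdot\|_\infty}}g)\circ\Pi^d$. This sidesteps the issue (present in the paper's $\iota(x)=(\phi_k(x))_k$) that without a uniform bound on $\|\phi_k\|_\infty$ the embedding need not take values in $\ell^\infty(\N)$, a point you correctly address in your parenthetical remark by centering at $x_0$. You also make explicit the reliance on Proposition~\ref{prop:morealg}, which the paper's proof invokes only implicitly when citing Theorem~\ref{thm:main} for the algebra $\mathscr P_{h,\Phi(h)}$. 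In short: same skeleton, slightly cleaner flesh.
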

\begin{proof} Let us consider the map $\iota:X\to \ell^\infty(\N)$ defined as
\[ \iota(x)= (\phi_k(x))_k, \quad x \in X.\]
Then $\iota$ is an isometry between the metric spaces $(X,\sfd)$ and $(\iota(X), \|\cdot\|')$, where $\|\cdot\|'$ is the restriction to $\iota(X)$ of the norm $\|\cdot\|_\infty$ in $\ell^\infty(\N)$. This of course implies that $\mathcal{J}: \prob_p(X, \sfd) \to \prob_p(\iota(X), \sfd_{\|\cdot\|'})$ defined as
\[ \mathcal{J}(\mu):= \iota_\sharp \mu, \quad \mu \in \prob_p(X, \sfd),\]
is an isometry between the metric spaces $\prob_p(X, \sfd)$ and $\prob_p(\iota(X), \sfd_{\|\cdot\|'})$. If we set $\AA:=\mathcal{J}^*(\AA')$ and $\tilde{\mm}:=\mathcal{J}_\sharp \mm$, where 
\[ \mathcal{J}^*F = F \circ \mathcal{J},\quad F \in \AA',\]
then the spaces $H^{1,q}(\prob_p(X, \sfd), W_{p, \sfd}, \mm; \AA)$ and $H^{1,q}(\prob_p(\iota(X), \sfd_{\|\cdot\|'}), W_{p, \sfd_{\|\cdot\|'}}, \tilde{\mm}; \AA')$ are isomorphic (see \cite[Theorem 5.3.3]{Savare22}). Moreover we can see that $\AA \subset \EE$ so that it is enough to prove that the algebra $\AA'$ is dense in $q$-energy in $D^{1,q}(\prob_p(\iota(X), \sfd_{\|\cdot\|'}), W_{p, \sfd_{\|\cdot\|'}}, \tilde{\mm})$. To this aim, by Theorem \ref{theo:startingpoint}, it suffices to fix $\nu \in \prob_p(\iota(X), \sfd_{\|\cdot\|'})$ and prove that the function
  \begin{equation}
    \label{eq:98}
    F(\mu):=W_{p, \sfd_{\|\cdot\|'}}(\nu,\mu)\quad\text{satisfies}\quad
    |\rmD F|_{\star, q, \AA'}\le 1\quad\text{$\mm$-a.e.}.
  \end{equation}
  We split the proof in two steps.

  Step 1: it is sufficient to prove
  that, for every $h \in \N$, the function $F_h:\prob_p(\iota(X), \sfd_{\|\cdot\|'})\to\R$
  \begin{equation}
    \label{eq:99}
    F_h(\mu):=W_{p, \sfd_{\|\cdot\|_{\infty}}}(\hat\pi^h_\sharp\nu,\hat \pi^h_\sharp\mu)\quad
    \text{satisfies}\quad
    |\rmD F_h|_{\star, q, \AA'}\le 1\quad\text{$\mm$-a.e.},
  \end{equation}
  where $\hat\pi^h: \ell^{\infty}(\N) \to \ell^\infty(\N)$ is defined as
  \[ \hat\pi^h((x_k)_k):=(x_1, \dots, x_k, 0, \dots), \quad (x_k)_k \in \ell^{\infty}(\N).\]
  In fact, using the continuity property of the Wasserstein distance, it is
  clear that for every $\mu\in \prob_p(\iota(X), \sfd_{\|\cdot\|'})$
  \begin{equation}
    \label{eq:100} 
    \lim_{n\to\infty}F_h(\mu)=F(\mu), 
  \end{equation}
   so that it is enough to apply Theorem \ref{thm:omnibus}(1)-(3) to obtain \eqref{eq:98}. 
  
  Step 2: 
   Let $h \in \N$ be fixed and let us denote by $W_{p,h}$ the Wasserstein distance on $\prob_p(\R^h, \sfd_{\|\cdot\|_{h,\infty}})$, where $\|\cdot\|_{h, \infty}$ is the $\infty$ norm on $\R^h$. It is easy to check that
  \[ W_{p,h}(\pi^h_\sharp \mu_0, \pi^h_\sharp \mu_1) = W_{p, \sfd_{\|\cdot\|_\infty}}(\hat \pi^h_\sharp \mu_0, \hat \pi^h_\sharp \mu_1) \quad \text{ for every } \mu_0, \mu_1 \in \prob_p(\iota(X), \sfd_{\|\cdot\|'}).\]
  Thus, if we define the function $\tilde F_h:\prob_p(\R^h, \sfd_{\|\cdot\|_{h, \infty}})\to\R$
  as
  \[\tilde F_h(\mu):=W_{p,h}(\pi^h_\sharp\nu,\mu)\]
  we get that 
 \[ F_h(\mu)=\tilde F_h(\pi^h_\sharp \mu).\]  

  We also introduce the measure $\mm_h$ which
  is the push-forward of $\mm$ through the ($1$-Lipschitz) map
  $P^h: \prob_p(\iota(X), \sfd_{\|\cdot\|'})\to \prob_p(\R^h, \sfd_{\|\cdot\|_{h, \infty}})$ defined as $P^h(\mu):=\pi^h_\sharp \mu$.
  By  Theorem \ref{thm:main} applied to $H^{1,q}(\prob_p(\R^h, \sfd_{\|\cdot\|_{h, \infty}}),W_{p,\sfd_{\|\cdot\|_{h, \infty}}},\mm_h)$, 
  we can find a sequence of cylinder functions $\tilde F_{h,n}\in
  \ccyl{\prob(\R^h)}{\mathscr{P}_{h, \Phi(h)}}$, $n\in \N$, 
  such that
  \begin{gather}
    \label{eq:105}
    \tilde F_{h,n}\to \tilde F_h\text{ in
      $\mm_h$-measure},\\
    \label{eq:107}
    \lip_{W_{p,\sfd_{\|\cdot\|_{h, \infty}}}} \tilde F_{h,n}\to g_h\quad\text{in
    }L^q(\prob_p(\R^h, \sfd_{\|\cdot\|_{h, \infty}}),\mm_h)\quad\text{with }g_h\le 1
    \text{ $\mm_h$-a.e.}    
  \end{gather}
  We thus consider the functions $F_{h,n}\in \AA'$
  defined as in \eqref{eq:102} by
  \begin{equation}
    \label{eq:108}
    F_{h,n}(\mu):=\tilde F_{h,n}(\pi^h_\sharp\mu)=\tilde
    F_{h,n}(P^h(\mu))\quad
    \text{for every }\mu\in \prob(\iota(X)).
  \end{equation}
  Since for every $\eps>0$
    \begin{align*}
    \mm\Big(\big\{\mu:|F_{h,n}(\mu)-F_h(\mu)|>\eps\big\}\Big)&=
    \mm\Big(\big\{\mu:|\tilde F_{h,n}(P^h(\mu))-\tilde
    F_h(P^h(\mu))|>\eps\big\}\Big)\\&=
    \mm_h\Big(\big\{\mu:|\tilde F_{h,n}(\mu)-\tilde F_h(\mu)|>\eps\big\}\Big),    
  \end{align*}
  \eqref{eq:105} yields that $F_{h,n}\to F_h$ in $\mm$-measure as
  $n\to\infty$.

  On the other hand, \eqref{eq:104} and Remark \ref{rem:separable} yield
  \begin{align*}
    \lip_{W_{p, \sfd_{\|\cdot\|'}}} F_{h,n}(\mu)&\le
    \lip_{W_{p,h}} \tilde F_{h,n}(P^h(\mu))
  \end{align*}
  so that, up to a unrelabeled subsequence, we have
  \begin{displaymath}
    \lip_{W_{p, \sfd_{\|\cdot\|'}}} F_{h,n}\weakto G_h \le g_h\circ P^h\quad\text{in }L^q(\prob_p(\B, \sfd_{\|\cdot\|}),\mm)
  \end{displaymath}
  and $g_h\circ P^h\le 1$ $\mm$-a.e.~in $\prob_p(\iota(X), \sfd_{\|\cdot\|'})$.
   By Theorem \ref{thm:omnibus}(1)-(3), we obtain \eqref{eq:99}, concluding the proof.
\end{proof}
\begin{corollary}\label{cor:densban} Let $(\B, \|\cdot\|)$ be a separable Banach space and let $\mm$ be a positive and finite Borel measure on $\prob_p(\B, \sfd_{\|\cdot\|})$. Then the algebra $\ccyl{\prob(\B)}{\rmC_b^1(\B)}$ is dense in $q$-energy in $D^{1,q}(\prob_p(\B, \sfd_{\|\cdot\|}), W_{p, \sfd_{\|\cdot\|}}, \mm)$.
\end{corollary}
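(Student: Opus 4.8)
The plan is to deduce Corollary \ref{cor:densban} from Theorem \ref{thm:main3} applied to the metric space $(X,\sfd)=(\B,\sfd_{\|\cdot\|})$; the only real work is to choose a convenient countable generating family inside $\rmC_b^1(\B)$, after which the corollary follows by a soft monotonicity argument.

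First I would construct a countable family $\{\phi_k\}_{k\in\N}\subset\rmC_b^1(\B)$ with $\|x-y\|=\sup_k|\phi_k(x)-\phi_k(y)|$ for all $x,y\in\B$. Starting from a countable dense subset $\{x_m\}_m$ of the unit sphere of $\B$, Hahn--Banach yields $\ell_m\in\B^*$ with $\|\ell_m\|=1$ and $\ell_m(x_m)=1$; an elementary $\eps$-argument then shows that $\{\ell_m\}_m$ is norming, i.e.\ $\sup_m\ell_m(z)=\|z\|$ for every $z\in\B$. Since linear functionals are unbounded, I would then truncate: let $\theta_n\in\rmC^1(\R)$ be nondecreasing, equal to the identity on $[-n,n]$, with $|\theta_n'|\le1$ and $\|\theta_n\|_\infty\le n+1$, and set $\phi_{m,n}:=\theta_n\circ\ell_m$. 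Each $\phi_{m,n}$ is bounded, continuously differentiable and $1$-Lipschitz, hence lies in $\rmC_b^1(\B)\subset\Lipb(\B,\sfd_{\|\cdot\|})$, and $\sup_{m,n}|\phi_{m,n}(x)|\le\|x\|+1<\infty$ so that the corresponding map $x\mapsto(\phi_{m,n}(x))_{m,n}$ indeed lands in $\ell^\infty(\N)$. For fixed $x,y$, choosing $n\ge\max\{\|x\|,\|y\|\}$ gives $\phi_{m,n}(x)-\phi_{m,n}(y)=\ell_m(x-y)$, whence $\sup_{m,n}|\phi_{m,n}(x)-\phi_{m,n}(y)|=\sup_m|\ell_m(x-y)|=\|x-y\|$, using the norming property applied to $\pm(x-y)$ and the bound $|\theta_n(s)-\theta_n(t)|\le|s-t|$ for the reverse inequality.

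Finally I would let $\EE$ be the smallest unital subalgebra of real functions on $\B$ containing $\{\phi_{m,n}\}_{m,n}$. Since $\rmC_b^1(\B)$ is itself a unital algebra (sums, products and scalar multiples of bounded $\rmC^1$ Lipschitz functions are again of this type, and the constants belong to it), one gets $\EE\subset\rmC_b^1(\B)$, and hence $\ccyl{\prob(\B)}{\EE}\subset\ccyl{\prob(\B)}{\rmC_b^1(\B)}\subset\Lipb(\prob_p(\B,\sfd_{\|\cdot\|}),W_{p,\sfd_{\|\cdot\|}})$ by \eqref{eq:123}. Theorem \ref{thm:main3}, applied with the family $\{\phi_{m,n}\}_{m,n}$, then yields that $\ccyl{\prob(\B)}{\EE}$ is dense in $q$-energy in $D^{1,q}(\prob_p(\B,\sfd_{\|\cdot\|}),W_{p,\sfd_{\|\cdot\|}},\mm)$; and density in $q$-energy passes to any larger subalgebra of $\Lipb(\prob_p(\B,\sfd_{\|\cdot\|}),W_{p,\sfd_{\|\cdot\|}})$, since a sequence witnessing Definition \ref{def:density} with terms in $\ccyl{\prob(\B)}{\EE}$ has all its terms in $\ccyl{\prob(\B)}{\rmC_b^1(\B)}$ as well. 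This gives the claim. The only genuinely non-formal step is the construction of the norming family together with the truncations $\theta_n$; everything else is routine, so I do not anticipate a substantial obstacle.
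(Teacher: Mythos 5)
Your proposal is correct and follows the same strategy as the paper: choose a countable generating family in $\rmC_b^1(\B)$, pass to the unital subalgebra $\EE$ it generates, invoke Theorem~\ref{thm:main3}, and then use the obvious monotonicity of Definition~\ref{def:density} to conclude density of the larger algebra $\ccyl{\prob(\B)}{\rmC_b^1(\B)}$. In fact your version is more careful than the printed proof in two places. First, the paper takes $\phi_k(x)=\langle x_k^*,x\rangle$ and asserts $\phi_k\in\rmC_b^1(\B)$, but a nonzero continuous linear functional is unbounded and therefore does not lie in $\rmC_b^1(\B)$ as the paper defines it (``bounded, continuously differentiable and Lipschitz''); your truncations $\theta_n\circ\ell_m$ fix this, and your verification that $\sup_{m,n}|\phi_{m,n}(x)-\phi_{m,n}(y)|=\|x-y\|$ (using $|\theta_n'|\le1$ for the upper bound and taking $n\ge\max\{\|x\|,\|y\|\}$ for the lower bound) is exactly the right check. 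Second, the paper says the $x_k^*$ should ``separate the points of $\B$,'' but Theorem~\ref{thm:main3} actually requires the family to reproduce the metric, i.e.\ $\sup_k|\langle x_k^*,x-y\rangle|=\|x-y\|$, which is the norming property rather than mere separation (e.g.\ the standard unit vectors $\{e_k\}\subset\ell^2$ separate points but are not norming); your Hahn--Banach construction of a countable norming family from a dense subset of the unit sphere of $\B$ supplies precisely what is needed. Your remaining steps — $\EE\subset\rmC_b^1(\B)$ because $\rmC_b^1(\B)$ is a unital algebra, and monotonicity of density in $q$-energy with respect to enlarging $\AA$ — are both correct. No gaps.
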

\begin{proof} It is enough to consider the functions of the form $\phi_k(x):= \la x_k^*,x \ra$, $x \in \B$, where $\{x_k^*\}_{k \in \N}$ is a subset of unit sphere in $\B^*$ separating the points in $\B$. Notice that each $\phi_k$ belongs to $\rmC_b^1(\B)$.
\end{proof}

\begin{corollary}\label{cor:densman} Let $(\M,g)$ be a complete Riemannian manifold and let $\mm$ be a positive and finite Borel measure on $\prob_p(\M, \sfd_{g})$, where $\sfd_g$ is the Riemannian distance induced by $g$. Then the algebra $\ccyl{\prob(\M)}{\rmC_c^\infty(\M)}$ is dense in $q$-energy in $D^{1,q}(\prob_p(\M, \sfd_{g}), W_{p, \sfd_{g}}, \mm)$.
\end{corollary}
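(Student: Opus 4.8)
The plan is to deduce this from Theorem~\ref{thm:main3}, applied to a carefully chosen countable generating family of smooth, compactly supported functions, and then to absorb the additive constants produced by the unital algebra such a family generates. Throughout I use that, $\M$ being a complete (connected) Riemannian manifold, $(\M,\sfd_g)$ is a complete and separable metric space whose closed bounded sets are compact (Hopf--Rinow), and that every $\phi\in\rmC_c^\infty(\M)$ is bounded and $\sfd_g$-Lipschitz (integrate $\nabla\phi$, which is bounded, along almost minimizing curves), so that $\rmC_c^\infty(\M)$ is an algebra contained in $\Lipb(\M,\sfd_g)$; consequently $\ccyl{\prob(\M)}{\rmC_c^\infty(\M)}$ is a subalgebra of $\Lipb(\prob_p(\M,\sfd_g),W_{p,\sfd_g})$, since any such cylinder function is bounded and $W_{p,\sfd_g}$-Lipschitz by \eqref{eq:123}.

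First I would produce a countable family $\{\phi_k\}_k\subset\rmC_c^\infty(\M)$ with $\sfd_g(x,y)=\sup_k|\phi_k(x)-\phi_k(y)|$ for all $x,y\in\M$. If every $\phi_k$ is $1$-Lipschitz the inequality ``$\le$'' is automatic, so only ``$\ge$'' is at stake. Fix $x,y\in\M$, set $R:=\sfd_g(x,y)+1$, and note that $f:=(R-\sfd_g(\cdot,y))^+$ is $1$-Lipschitz, supported in the compact ball $\overline{\rmB_{\sfd_g}(y,R)}$, and satisfies $|f(x)-f(y)|=\sfd_g(x,y)$. Invoking a smoothing theorem for Lipschitz functions on Riemannian manifolds (Greene--Wu; see also Azagra et al.), for every $\eps>0$ there is $g\in\rmC^\infty(\M)$ with $\|g-f\|_\infty\le\eps$ and Lipschitz constant $\le1+\eps$; multiplying $g$ by a fixed cutoff $\chi\in\rmC_c^\infty(\M)$ equal to $1$ on $\overline{\rmB_{\sfd_g}(y,R)}$ and then dividing by a suitable $1+O(\eps)$ yields $\phi\in\rmC_c^\infty(\M)$ that is $1$-Lipschitz with $|\phi(x)-\phi(y)|\ge\sfd_g(x,y)-C\eps$. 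Taking $\eps=1/n$ and letting $(x,y)$ run over a countable dense subset of $\M\times\M$, and using continuity of $\sfd_g$ and the $1$-Lipschitzianity of the chosen functions, produces the desired countable family.

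With this family in hand I would apply Theorem~\ref{thm:main3}: denoting by $\EE$ the smallest unital subalgebra of $\Lipb(\M,\sfd_g)$ containing $\{\phi_k\}_k$, it gives that $\ccyl{\prob(\M)}{\EE}$ is dense in $q$-energy in $D^{1,q}(\prob_p(\M,\sfd_g),W_{p,\sfd_g},\mm)$. It then remains to absorb constants. Every $\eta\in\EE$ is a polynomial in finitely many $\phi_k$'s, hence $\eta=c\,\unit+\zeta$ with $c\in\R$ and $\zeta\in\rmC_c^\infty(\M)$ (the non-constant part, which vanishes off the union of the finitely many compact supports involved, hence is itself smooth and compactly supported). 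For a probability measure $\mu$ this gives $\lin\eta(\mu)=c+\lin\zeta(\mu)$, so any $F=\psi\circ\lin\pphi\in\ccyl{\prob(\M)}{\EE}$ with $\psi\in\rmC_b^1(\R^N)$ and $\pphi\in\EE^N$ equals $\tilde\psi\circ\lin{\boldsymbol\zeta}$ where $\tilde\psi$ is the corresponding translate of $\psi$ (still in $\rmC_b^1(\R^N)$) and $\boldsymbol\zeta\in(\rmC_c^\infty(\M))^N$. Hence $\ccyl{\prob(\M)}{\EE}\subset\ccyl{\prob(\M)}{\rmC_c^\infty(\M)}\subset\Lipb(\prob_p(\M,\sfd_g),W_{p,\sfd_g})$, and since density in $q$-energy (Definition~\ref{def:density}) is inherited by any intermediate subalgebra — the same approximating sequences still lie in it — the algebra $\ccyl{\prob(\M)}{\rmC_c^\infty(\M)}$ is dense in $q$-energy, as claimed.

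The only genuinely delicate point is the second step: it is geometrically obvious that truncated distance functions realize $\sfd_g$ as a supremum of differences, but turning them into $\rmC^\infty$ functions while keeping the Lipschitz constants arbitrarily close to $1$ cannot be done by an explicit formula (the cut locus of the base point obstructs smoothness) nor by a naive cutoff or mollification (which would inflate the Lipschitz constant by an amount not controlled by $\eps$), so it genuinely rests on the Riemannian Lipschitz-smoothing theorem quoted above.
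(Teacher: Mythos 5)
Your proof is correct and follows the same route the paper takes: reduce to Theorem~\ref{thm:main3} via a countable family in $\rmC_c^\infty(\M)$ realizing $\sfd_g$ as a supremum of differences, then note that the $\EE$-cylinder functions already lie in $\ccyl{\prob(\M)}{\rmC_c^\infty(\M)}$. The paper's proof is a one-liner that simply asserts the existence of such a family; you carefully fill in the Greene--Wu smoothing argument (including the observation that the cutoff only needs to damp an $O(\eps)$ tail, so the Lipschitz constant survives) and the absorption of the additive constants produced by passing to the unital algebra $\EE$ --- genuine but elided details.
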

\begin{proof} It is enough to consider a sequence of functions $(\phi_k)_k \subset \rmC_c^\infty(\M)$ satisfying \eqref{eq:compseq} for $\sfd_g$. 
\end{proof}

\section{Reflexivity, uniform convexity and Clarkson's inequalities}\label{sec:refl}
The aim of this section is to study how some properties of a Banach space (or its dual) pass to the Wasserstein Sobolev space built on it. In the whole section $(\B, \|\cdot\|)$ is a fixed separable Banach space and $\mm$ is a positive and finite Borel measure on $\prob_p(\B, \sfd_{\|\cdot\|})$.

To study the properties of the pre-Cheeger energy as in \eqref{eq:53} we will use the concept of $L^q$-direct integral of Banach spaces (\cite{grupre, rms})  (see also the related notion of Banach bundle \cite{pasqua}), that we introduce here briefly in the form that it is best suited to our needs. Let $\VV$ be the vector space of functions
\[ \VV := \left \{ f: \B \to \B^* : f \text{ is measurable and  } \int_{\B} \|f(x)\|_*^{p'} \d \mu(x) < +\infty \text{ for $\mm$-a.e. } \mu \in \prob_p(\B, \sfd_{\|\cdot\|}) \right \}. \]
It is not difficult to check that, for every $f \in \VV$, the map $M_f: \prob_p(\B, \sfd_{\|\cdot\|}) \to \R$ defined as
\[ M_f(\mu) := \|f\|_{L^{p'}(\B, \mu; (\B^*, \|\cdot\|_*))}, \quad \mu \in \prob_p(\B, \sfd_{\|\cdot\|}),\]
is measurable.
Let us denote by $\|\cdot\|_{*,p',\mu}$ the ${L^{p'}(\B, \mu; (\B^*, \|\cdot\|_*))}$-norm on the completion of $\VV_\mu:= \VV / \sim_{\mu}$, where $\sim_\mu$ is the equivalence relation of the equality $\mu$-a.e.. We say that $s: \prob_p(\B, \sfd_{\|\cdot\|}) \to \bigsqcup_{\mu} \VV_\mu$ (i.e.~$s(\mu) \in \VV_\mu$ for every $\mu \in \prob_p(\B, \sfd_{\|\cdot\|})$) is a simple function if there exist a finite measurable partition of $\prob_p(\B, \sfd_{\|\cdot\|})$, $\{A_k\}_{k=1}^N$, and values $\{f_k\}_{k=1}^N\subset \VV$ such that $s(\mu)=\sum_{k=1}^N \bambalau_{A_k}(\mu)f_k$ for every $\mu \in \prob_p(\B, \sfd_{\|\cdot\|})$. A function $G:\prob_p(\B, \sfd_{\|\cdot\|}) \to \bigsqcup_{\mu} \VV_\mu$ is said to be Bochner measurable if there exists a sequence of simple functions $(s_k)_k$ such that $\|G(\mu)-s_k(\mu)\|_{*,p',\mu} \to 0$ for $\mm$-a.e.~$\mu \in \prob_p(\B, \sfd_{\|\cdot\|})$. The direct integral of $(\VV_\mu)_{\mu}$ with respect to $\mm$ is the vector space of Bochner measurable functions $G:\prob_p(\B, \sfd_{\|\cdot\|}) \to \bigsqcup_{\mu} \VV_\mu$ modulo equivalence $\mm$-a.e.~and it is denoted by $\int_{\prob_p(\B, \sfd_{\|\cdot\|})}^{\bigoplus} \VV_\mu \d \mm(\mu)$. The $L^q$ direct integral $\left ( \int_{\prob_p(\B, \sfd_{\|\cdot\|})}^{\bigoplus} \VV_\mu \d \mm(\mu) \right )_{L^q}$ of $(\VV_\mu)_{\mu}$ with respect to $\mm$ is the subspace of $\int_{\prob_p(\B, \sfd_{\|\cdot\|})}^{\bigoplus} \VV_\mu \d \mm(\mu)$ consisting of those functions $G$ such that the (measurable) map $\mu \mapsto \|G(\mu)\|_{*,p',\mu}$ is in $L^q(\prob_p(\B, \sfd_{\|\cdot\|}), \mm)$.

\begin{proposition}\label{prop:diuc} Let $(\B, \|\cdot\|)$ be a separable Banach space. Then $\left ( \int_{\prob_p(\B, \sfd_{\|\cdot\|})}^{\bigoplus} \VV_\mu \d \mm(\mu) \right )_{L^q}$ is a  Banach space. If, in addition, $(\B, \|\cdot\|)$ is reflexive (resp.~$(\B^*, \|\cdot\|_*)$ is uniformly convex), then $\left ( \int_{\prob_p(\B, \sfd_{\|\cdot\|})}^{\bigoplus} \VV_\mu \d \mm(\mu) \right )_{L^q}$ is reflexive (resp.~uniformly convex). Finally, if $F \in \ccyl{\prob(\B)}{\rmC_b^1(\B)}$, then $\rmD_p F \in \left ( \int_{\prob_p(\B, \sfd_{\|\cdot\|})}^{\bigoplus} \VV_\mu \d \mm(\mu) \right )_{L^q}$.
\end{proposition}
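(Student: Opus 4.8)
The plan is to handle the three assertions separately, using the general theory of $L^q$-direct integrals of Banach spaces from \cite{grupre, rms} together with classical facts on Lebesgue--Bochner spaces. For the Banach space claim I would note that completeness of $\left( \int_{\prob_p(\B, \sfd_{\|\cdot\|})}^{\bigoplus} \VV_\mu \d \mm(\mu) \right)_{L^q}$ is part of the standard construction of the $L^q$-direct integral, but it can also be checked directly: given a Cauchy sequence $(G_n)_n$, the scalar maps $\mu \mapsto \|G_n(\mu)-G_m(\mu)\|_{*,p',\mu}$ form a Cauchy sequence in $L^q(\prob_p(\B, \sfd_{\|\cdot\|}),\mm)$; passing to a suitable subsequence one obtains $\mm$-a.e.\ convergence $G_{n_k}(\mu)\to G(\mu)$ inside the Banach space $\overline{\VV_\mu}$, the pointwise limit $G$ is Bochner measurable as an $\mm$-a.e.\ limit of Bochner measurable functions, and Fatou's lemma places $G$ in the $L^q$-direct integral and gives $G_n\to G$.

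For reflexivity and uniform convexity I would argue fiberwise. Each completion $\overline{\VV_\mu}$ is isometric to a closed subspace of the Lebesgue--Bochner space $L^{p'}(\B,\mu;(\B^*,\|\cdot\|_*))$ (note $1<p'<\infty$ since $p>1$). If $\B$ is reflexive then $\B^*$ is reflexive, hence $L^{p'}(\B,\mu;\B^*)$ is reflexive and so is its closed subspace $\overline{\VV_\mu}$. If $(\B^*,\|\cdot\|_*)$ is uniformly convex, then $L^{p'}(\B,\mu;\B^*)$ is uniformly convex with a modulus of convexity depending only on $p'$ and on the modulus of $(\B^*,\|\cdot\|_*)$ --- crucially \emph{not} on $\mu$ --- and the same bound controls the modulus of $\overline{\VV_\mu}$. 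Thus the measurable field $(\overline{\VV_\mu})_\mu$ has reflexive fibers (resp.\ uniformly convex fibers with a common modulus), and since $1<q<\infty$ the corresponding $L^q$-direct integral inherits reflexivity (resp.\ uniform convexity) by the results of \cite{grupre, rms}.

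For the last assertion, fix a representation $F=\psi\circ\lin\uphi$ with $\psi\in\rmC_b^1(\R^N)$ and $\uphi=(\phi_1,\dots,\phi_N)\in(\rmC_b^1(\B))^N$. The estimate $\|\rmD_p F(\mu,x)\|_*\le\sum_{n=1}^N\|\partial_n\psi\|_\infty\|\nabla\phi_n\|_\infty=:C_F<+\infty$ shows $\rmD_p F[\mu]\in\VV$ for every $\mu$ and $\|\rmD_p F[\mu]\|_{*,p',\mu}\le C_F$; since $\mm$ is finite, the map $\mu\mapsto\|\rmD_p F[\mu]\|_{*,p',\mu}=\lip_{W_{p, \sfd_{\|\cdot\|}}} F(\mu)$ (Proposition \ref{prop:equality}) belongs to $L^q(\prob_p(\B, \sfd_{\|\cdot\|}),\mm)$. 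It remains to verify Bochner measurability of $\mu\mapsto\rmD_p F[\mu]$, for which I would write $\rmD_p F[\mu]=\sum_{n=1}^N c_n(\mu)\,\nabla\phi_n$ with $\nabla\phi_n\in\VV$ fixed and $c_n(\mu):=\partial_n\psi(\lin\uphi(\mu))$ continuous in $\mu$ (because $\mu\mapsto\lin\uphi(\mu)$ is $W_{p, \sfd_{\|\cdot\|}}$-continuous, cf.\ \eqref{eq:123}, and $\partial_n\psi$ is continuous), and then approximate each $c_n$ by simple scalar functions, exhibiting $\mu\mapsto\rmD_p F[\mu]$ as an $\mm$-a.e.\ limit of simple functions. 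Hence $\rmD_p F$ belongs to $\left( \int_{\prob_p(\B, \sfd_{\|\cdot\|})}^{\bigoplus} \VV_\mu \d \mm(\mu) \right)_{L^q}$.

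The step I expect to be the main obstacle is the second one: it rests on the duality theory for $L^q$-direct integrals in \cite{grupre, rms} --- namely that, under the present hypotheses, the dual of an $L^q$-direct integral is the $L^{q'}$-direct integral of the dual fibers --- and on the fact that uniform convexity of $(\B^*,\|\cdot\|_*)$ yields a modulus of convexity for $L^{p'}(\B,\mu;\B^*)$ that is independent of $\mu$, so that the common-modulus hypothesis needed for the direct-integral statement is actually met. The first and third assertions are essentially routine once these tools are in place.
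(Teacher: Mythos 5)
Your proposal is correct and follows essentially the same route as the paper: completeness by citation to the general $L^q$-direct integral theory, reflexivity and uniform convexity transferred fiberwise from $L^{p'}(\B,\mu;\B^*)$ (using that a closed subspace inherits reflexivity and that the modulus of convexity of $L^{p'}(\B,\mu;\B^*)$ depends only on $p'$ and on the modulus of $\B^*$, not on $\mu$, so a common-modulus direct-integral result applies), and Bochner measurability of $\rmD_p F$ via the finite-sum representation $\sum_n c_n(\mu)\nabla\phi_n$ with scalar simple-function approximation of the coefficients $c_n$. The paper cites specific results for the two inheritance steps rather than re-deriving the direct-integral duality, but the underlying argument is the one you describe.
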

\begin{proof} The completeness property  can be found in \cite[Proposition 3.2]{grupre} (see also \cite{rms} where the notion of direct integral was introduced for the first time). 
\quad \\
 If  $(\B^*, \|\cdot\|_*)$ is uniformly convex, then we can use \cite[Theorem 2.2]{preprint}, together with the fact that $L^q(\prob_p(\B, \sfd_{\|\cdot\|}), \mm)$ is uniformly convex and that the modulus of convexity of $\VV_\mu$ is larger than the one of $L^{p'}(\B,\mu; (\B^*, \|\cdot\|))$, being $\VV_\mu$ just a subspace of the latter, and the fact that the modulus of convexity of the Bochner space $L^{p'}(\B,\mu; (\B^*, \|\cdot\|))$ does not depend on $\mu$ but only on $p'$ and on the modulus of convexity of the uniformly convex space $(\B^*, \|\cdot\|_*)$.
\quad \\
If $\B$ is reflexive, then also $\B^*$ and thus $L^{p'}(\B, \mu; (\B^*, \|\cdot\|_*))$ is reflexive so that we can apply \cite[Theorem 6.19]{rms}, also noting that, obviously, $L^q(\prob_p(\B, \sfd_{\|\cdot\|}), \mm)$ is reflexive.
\quad \\
Let now $F=\psi \circ \lin{\pphi} \in \ccyl{\prob(\B)}{\rmC_b^1(\B)}$, with $\psi \in \rmC_b^1(\R^N)$ and $\pphi \in (\rmC_b^1(\B))^N$, for some $N \in \N$; notice that we are looking at $\rmD_p F$ as the map sending $\mu \in \prob_p(\B, \sfd_{\|\cdot\|})$ to $\rmD_p[\mu]: \B \to \B^*$. Of course $\rmD_p F[\mu] \in \VV_\mu$ for every $\mu \in \prob_p(\B, \sfd_{\|\cdot\|})$ and it is uniformly bounded. Moreover it is Bochner measurable since, in order to approximate it with simple functions, it is enough to approximate every term of the form $\partial \psi_n(\int_\B \phi_1 \d \mu, \dots, \int_\B \phi_N \d \mu)$ with $\R$-valued simple function $(s_k^n)_k$ on $\prob_p(\B, \sfd_{\|\cdot\|})$ (which is possible since this map is measurable) so that 
\[ s_k(\mu):= \sum_{n=1}^N s_k^n(\mu) \nabla \phi_n, \quad \mu \in \prob_p(\B, \sfd_{\|\cdot\|}),\]
is a sequence of simple functions approximating $\rmD_p F$. 
\end{proof}

\begin{theorem}\label{thm:refl} Let $(\B, \|\cdot\|)$ be a separable reflexive Banach space,  let $p,q \in (1,+\infty)$ be (not necessarily conjugate) fixed exponents  and let $\mm$ be a positive and finite Borel measure on $\prob_p(\B, \sfd_{\|\cdot\|})$. Then the Sobolev space $H^{1,q}(\prob_p(\B, \sfd_{\|\cdot\|}), W_{p, \sfd_{\|\cdot\|}}, \mm)$ is reflexive.
\end{theorem}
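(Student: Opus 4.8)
The plan is to realise $H:=H^{1,q}(\prob_p(\B, \sfd_{\|\cdot\|}), W_{p, \sfd_{\|\cdot\|}}, \mm)$ as an isometric quotient of a closed subspace of the reflexive Banach space
\[ Z:=L^q(\prob_p(\B, \sfd_{\|\cdot\|}),\mm)\times\left(\int_{\prob_p(\B, \sfd_{\|\cdot\|})}^{\bigoplus}\VV_\mu\,\d\mm(\mu)\right)_{L^q},\qquad \|(f,G)\|_Z^q:=\|f\|_{L^q(\mm)}^q+\int_{\prob_p(\B, \sfd_{\|\cdot\|})}\|G(\mu)\|_{*,p',\mu}^q\,\d\mm(\mu). \]
Indeed $Z$ is reflexive: $L^q(\mm)$ is reflexive because $q\in(1,+\infty)$, the second factor is reflexive by Proposition \ref{prop:diuc} (this is exactly where the reflexivity of $\B$ is used), and a finite product of reflexive spaces is reflexive. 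Writing $\AA:=\ccyl{\prob(\B)}{\rmC_b^1(\B)}$, I would define the linear map $\Lambda\colon\AA\to Z$, $\Lambda F:=(F,\rmD_p F)$, which is well posed by the last assertion of Proposition \ref{prop:diuc} together with the identity $\|\rmD_p F[\mu]\|_{*,p',\mu}=\lip_{W_{p, \sfd_{\|\cdot\|}}}F(\mu)$ from Proposition \ref{prop:equality}; the latter also gives $\|\Lambda F\|_Z^q=\|F\|_{L^q(\mm)}^q+\pCE_q(F)$. Let $V$ be the norm closure of $\Lambda(\AA)$ in $Z$, a closed (hence weakly closed and reflexive) subspace, and let $N:=V\cap\big(\{0\}\times(\int_{\prob_p(\B, \sfd_{\|\cdot\|})}^{\bigoplus}\VV_\mu\,\d\mm(\mu))_{L^q}\big)$, a closed subspace of $V$.

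Next I would show that the bounded linear projection $\pi\colon V\to L^q(\mm)$, $\pi(F,\Xi):=F$, induces an isometric isomorphism $\bar\pi\colon V/N\to H$. For surjectivity onto $H$ and the bound $\|F\|_H\le\inf\{\|(F,\Xi)\|_Z:(F,\Xi)\in V\}$, fix $F\in H$: by the density in $q$-energy of $\AA$ (Corollary \ref{cor:densban}) and the strong approximation property recorded in Remark \ref{rem:dense2} there are $F_n\in\AA$ with $F_n\to F$ in $L^q(\mm)$ and $\lip_{W_{p, \sfd_{\|\cdot\|}}}F_n\to|\rmD F|_{\star,q}$ strongly in $L^q(\mm)$; hence $(\Lambda F_n)_n$ is bounded in $Z$ with $\|\Lambda F_n\|_Z\to\|F\|_H$, so by reflexivity of $Z$ a subsequence converges weakly to some element of $V$ whose first coordinate is $F$ (because $F_n\to F$ strongly and $V$ is weakly closed), and weak lower semicontinuity of $\|\cdot\|_Z$ yields $\|(F,\Xi)\|_Z\le\|F\|_H$ for this $\Xi$. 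Conversely, if $(F,\Xi)\in V$ pick $F_n\in\AA$ with $\Lambda F_n\to(F,\Xi)$ in $Z$; then $F_n\to F$ in $L^q(\mm)$ (so in $\mm$-measure) and, by the reverse triangle inequality in the fibres, $\lip_{W_{p, \sfd_{\|\cdot\|}}}F_n=\|\rmD_p F_n[\cdot]\|_{*,p',\cdot}\to h:=\|\Xi(\cdot)\|_{*,p',\cdot}$ in $L^q(\mm)$, hence weakly; by Definition \ref{def:relgrad}, $h$ is a $(q,\AA)$-relaxed gradient of $F$, so $F\in H$, and using the density in energy (which gives $|\rmD F|_{\star,q,\AA}=|\rmD F|_{\star,q}$) together with the pointwise minimality of Theorem \ref{thm:omnibus}(3) we get $|\rmD F|_{\star,q}\le h$ $\mm$-a.e., whence $\|F\|_H^q=\|F\|_{L^q(\mm)}^q+\CE_q(F)\le\|(F,\Xi)\|_Z^q$. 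Combining the two estimates gives $\pi(V)=H$ as sets and $\|F\|_H=\inf\{\|(F,\Xi)\|_Z:(F,\Xi)\in V\}$ for every $F\in H$; since $V$ is a subspace the fibre $\{\Xi:(F,\Xi)\in V\}$ is a coset of $N$, so this infimum is precisely the quotient norm of $\bar\pi^{-1}(F)$ in $V/N$. Thus $\bar\pi$ is a bijective linear isometry of $V/N$ onto $H$. Finally, $V$ is reflexive as a closed subspace of the reflexive space $Z$, the quotient $V/N$ of a reflexive space by a closed subspace is reflexive, and $H\cong V/N$ isometrically, so $H$ is reflexive.

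The step I expect to be the main obstacle is the identification in the second paragraph, and in particular the two-sided matching of the Sobolev norm with the fibre-wise infimum over $V$: the inequality ``$\le$'' combines the density in $q$-energy of cylinder functions with weak compactness in the reflexive space $Z$, while ``$\ge$'' requires converting convergence in the $L^q$-direct integral into the existence of a $(q,\AA)$-relaxed gradient via Definition \ref{def:relgrad}, and then invoking both the pointwise minimality of Theorem \ref{thm:omnibus}(3) and the coincidence of $(q,\AA)$- and $q$-minimal relaxed gradients supplied by the density result. By contrast, the reflexivity of the ambient space $Z$ is essentially handed to us by Proposition \ref{prop:diuc}.
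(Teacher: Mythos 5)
Your proposal is correct and follows essentially the same route as the paper's proof: you realise the Sobolev space as an isometric image of the quotient $V/N$ of the closed subspace $V$ (the closure of $\{(F,\rmD_p F):F\in\AA\}$) of the reflexive product $L^q(\mm)\times\bigl(\int^{\oplus}\VV_\mu\,\d\mm(\mu)\bigr)_{L^q}$ by $N=\{0\}\times\dD_\mm 0$, using Corollary \ref{cor:densban} for the lower bound and weak compactness in the reflexive ambient space for the upper bound, exactly as in the paper. The only cosmetic difference is that you verify $\pi^X(V)=H$ and produce a two-sided isometric isomorphism $V/N\cong H$, while the paper only needs the isometric embedding $H\hookrightarrow\gG_0$ (which suffices because $H$ is complete, hence its image is a closed subspace of the reflexive $\gG_0$); and for the inequality $\ge$ you pass through Definition \ref{def:relgrad} and Theorem \ref{thm:omnibus}(3), whereas the paper invokes the relaxation characterization \eqref{eq:relpreq} directly — two equivalent ways of saying the same thing.
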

\begin{proof}
It is sufficient to provide a linear isometry $\iota$ from $H^{1,q}(\prob_p(\B, \sfd_{\|\cdot\|}), W_{p, \sfd_{\|\cdot\|}}, \mm)$ into a reflexive Banach space. For simplicity let us denote by 
\[ X:=L^q(\prob_p(\B, \sfd_{\|\cdot\|}), \mm), \quad Y:=\left ( \int_{\prob_p(\B, \sfd_{\|\cdot\|})}^{\bigoplus} \VV_\mu \d \mm(\mu) \right )_{L^q}\]
and let us define $\gG \subset X \times Y$ as the closure in $X \times Y$ (endowed with the norm $\|(x,y)\|_{X\times Y}^q = \|x\|_X^q+\|y\|_Y^q$) of
\[ \{ (F,\rmD_pF) : F \in \ccyl{\prob(\B)}{\rmC_b^1(\B)} \}.\]
Denoting by $\pi^X: X \times Y \to X$ the projection $\pi^X(x,y)=x$ for every $x \in X$, we define the sections of $\gG$ by
\[ \dD_\mm F := \{ G \in Y : (F,G) \in \gG \}, \quad F \in \pi^X(\gG). \]
It is clear that $\gG$ is closed and convex so that  it  is also weakly closed; let us show that $H^{1,q}(\prob_p(\B, \sfd_{\|\cdot\|}), W_{p, \sfd_{\|\cdot\|}}, \mm) \subset \pi^X(\gG)$: if $F \in H^{1,q}(\prob_p(\B, \sfd_{\|\cdot\|}), W_{p, \sfd_{\|\cdot\|}}, \mm)$, by Corollary \ref{cor:densban}, we can find a sequence $(F_n)_n \subset \ccyl{\prob(\B)}{\rmC_b^1(\B)}$ such that 
\[ F_n \to F, \quad \lip_{W_{p, \sfd_{\|\cdot\|}}} F_n \to |\rmD F|_{\star, q} \quad \text{ in } X.\]
By Proposition \ref{prop:equality}, we have that $\|\rmD_p F_n\|_Y$ is uniformly bounded so that, by the reflexivity of $Y$, we can find a subsequence of $(F_{n_k})_k$ such that
\[ (F_{n_k}, \rmD_p F_{n_k}) \weakto (F, G) \text{ in } X \times Y.\]
This gives that $F \in \pi^X(\gG)$ and that $\|G\|_Y  \le  \CE_q^{1/q}(F)$. Moreover, if we consider $F \in H^{1,q}(\prob_p(\B, \sfd_{\|\cdot\|}), W_{p, \sfd_{\|\cdot\|}}, \mm)$  and  $G \in \dD_{\mm}F$, then there exists $(F_n)_n \subset \ccyl{\prob(\B)}{\rmC_b^1(\B)}$ such that $(F_n, \rmD_p F) \to (F,G)$ in $X \times Y$ so that $\|G\|_Y = \lim_n \|\rmD_p F_n\|_Y= \lim_n \pCE_{q}^{1/q}(F_n) \ge \CE_q^{1/q}(F)$ so that 
\[ \min_{G \in \dD_\mm F} \|G\|_Y =\CE_q^{1/q}(F), \quad F \in H^{1,q}(\prob_p(\B, \sfd_{\|\cdot\|}), W_{p, \sfd_{\|\cdot\|}}, \mm). \]

We define $\gG_0:= \gG / (\{0\} \times \dD_\mm 0)$ with the quotient norm $\|\cdot\|_{Y,0}$: elements of $\gG_0$ are equivalence classes $[F] = \{ (F,G) : G \in \dD_\mm F \}$ and the quotient norm is simply given by 
\[ \|[F]\|_{Y,0} = \inf_{G \in \dD_\mm F} \left ( \|F\|^q_X + \|G\|_Y^q) \right )^{1/q} = \left ( \|F\|^q_X +  \min_{G \in \dD_\mm F} \|G\|_Y^q)  \right )^{1/q} = \|F\|_{H^{1,q}(\prob_p(\B, \sfd_{\|\cdot\|}), W_{p, \sfd_{\|\cdot\|}}, \mm)}.\]
The linear isometry is thus given by $\iota: H^{1,q}(\prob_p(\B, \sfd_{\|\cdot\|}), W_{p, \sfd_{\|\cdot\|}}, \mm) \to \gG_0$ defined as
\[ \iota(F) = [F], \quad F \in H^{1,q}(\prob_p(\B, \sfd_{\|\cdot\|}), W_{p, \sfd_{\|\cdot\|}}, \mm).\]
Since closed subspaces of reflexive spaces are reflexive and quotients of reflexive spaces are reflexive, $\gG_0$ is reflexive and this concludes the proof.
\end{proof}

The next result \cite[Lemma 2]{charuc} provides a quantitative version of the uniform convexity property that will pass from the pre-Cheeger energy to the Cheeger energy.
\begin{proposition}\label{prop:charuc} Let $(\WW, \|\cdot\|_{\WW})$ be a Banach space. Then $(\WW, \|\cdot\|_{\WW})$ is uniformly convex if and only if for every $t \in (1,+\infty)$ there exists a strictly increasing and continuous function $g_t:[0,2] \to [0,+\infty)$ such that 
\[ \left \| \frac{x+y}{2} \right \|_{\WW}^t + \left ( \|x\|_{\WW} \vee \|y\|_{\WW} \right )^t g_t \left ( \frac{\|x-y\|_{\WW}}{\|x\|_{\WW} \vee \|y\|_{\WW} )}\right ) \le \frac{1}{2}\|x\|_{\WW}^t + \frac{1}{2}\|y\|_{\WW}^t \quad \text{ for every } x,y \in \WW.\]
\end{proposition}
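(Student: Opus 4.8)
The plan is to prove both implications, the reverse one being elementary and the direct one requiring a compactness-free limiting argument. For the reverse implication, suppose such functions $g_t$ exist and fix $t\in(1,+\infty)$. Testing the inequality with $x=y\neq 0$ forces $g_t(0)\le 0$, hence $g_t(0)=0$, and strict monotonicity gives $g_t(s)>0$ for $s\in(0,2]$. If $x,y$ satisfy $\|x\|_\WW=\|y\|_\WW=1$ and $\|x-y\|_\WW\ge\eps$, the inequality reads $\|\tfrac{x+y}{2}\|_\WW^t+g_t(\|x-y\|_\WW)\le 1$; since $g_t$ is nondecreasing and $0<g_t(\eps)\le 1$ (the upper bound by testing with $y=-x$), we get $\|\tfrac{x+y}{2}\|_\WW\le(1-g_t(\eps))^{1/t}<1$. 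Thus the modulus of convexity of $\WW$ at $\eps$ is at least $1-(1-g_t(\eps))^{1/t}>0$, so $\WW$ is uniformly convex.

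For the direct implication, fix $t\in(1,+\infty)$ and observe that the desired inequality is positively $t$-homogeneous in $(x,y)$, the $g_t$-term depending only on the normalized pair $x/(\|x\|_\WW\vee\|y\|_\WW)$, $y/(\|x\|_\WW\vee\|y\|_\WW)$; hence it suffices to prove it when $M:=\|x\|_\WW\vee\|y\|_\WW=1$, the case $x=y=0$ being trivial. For $s\in[0,2]$ define the defect function
\[ h_t(s):=\inf\Big\{\tfrac12\|u\|_\WW^t+\tfrac12\|v\|_\WW^t-\big\|\tfrac{u+v}{2}\big\|_\WW^t:\ \|u\|_\WW\vee\|v\|_\WW=1,\ \|u-v\|_\WW\ge s\Big\}. \]
Convexity of $\rho\mapsto\rho^t$ and of $\|\cdot\|_\WW$ make the infimand nonnegative, so $h_t:[0,2]\to[0,1]$; it is nondecreasing and $h_t(0)=0$ (take $u=v$ with $\|u\|_\WW=1$). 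Since for any pair with $\|u\|_\WW\vee\|v\|_\WW=1$ the value $h_t(\|u-v\|_\WW)$ is a lower bound for $\tfrac12\|u\|_\WW^t+\tfrac12\|v\|_\WW^t-\|\tfrac{u+v}{2}\|_\WW^t$, it is enough to find a continuous strictly increasing $g_t\le h_t$ on $[0,2]$ with $g_t(0)=0$.

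The crux — and the main obstacle — is to show $h_t(s)>0$ for every $s\in(0,2]$. Argue by contradiction: pick (relabeling if necessary) pairs $(u_n,v_n)$ with $\|u_n\|_\WW=1\ge\|v_n\|_\WW=:r_n$, $\|u_n-v_n\|_\WW\ge s$, and $\tfrac12+\tfrac12 r_n^t-\|\tfrac{u_n+v_n}{2}\|_\WW^t\to 0$; after extracting a subsequence, $r_n\to r\in[0,1]$. If $r<1$, then $\|\tfrac{u_n+v_n}{2}\|_\WW\le\tfrac{1+r_n}{2}\to\tfrac{1+r}{2}$, and strict convexity of $\rho\mapsto\rho^t$ (here $t>1$) forces the left-hand quantity to be bounded below by $\tfrac12(1+r^t)-\big(\tfrac{1+r}{2}\big)^t>0$ in the limit, a contradiction. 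If $r=1$, set $\hat v_n:=v_n/\|v_n\|_\WW$ for large $n$; then $\|v_n-\hat v_n\|_\WW=\big|\,\|v_n\|_\WW-1\,\big|\to 0$, so $\|u_n-\hat v_n\|_\WW\ge s/2$ eventually, and uniform convexity of $\WW$ yields $\|\tfrac{u_n+\hat v_n}{2}\|_\WW\le 1-\delta_\WW(s/2)$ with $\delta_\WW(s/2)>0$ the modulus of convexity; hence $\|\tfrac{u_n+v_n}{2}\|_\WW\le 1-\delta_\WW(s/2)+o(1)$ while $\tfrac12+\tfrac12 r_n^t\to 1$, again contradicting convergence to $0$.

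Finally, from $h_t$ one extracts $g_t$ by a routine minorant construction: set $q_t(r):=\inf\{h_t(\rho)/(\rho+1):\rho\in[r,2]\}$ and $g_t(s):=\int_0^s q_t(r)\,dr$. Then $q_t$ is nondecreasing, bounded by $1$, and $q_t(r)\ge h_t(r)/3>0$ on $(0,2]$, so $g_t:[0,2]\to[0,+\infty)$ is continuous and strictly increasing with $g_t(0)=0$, and $g_t(s)\le s\,q_t(s)\le\tfrac{s}{s+1}h_t(s)\le h_t(s)$. Applying $g_t(\sigma)\le h_t(\sigma)\le\tfrac12\|u\|_\WW^t+\tfrac12\|v\|_\WW^t-\|\tfrac{u+v}{2}\|_\WW^t$ with $u:=x/M$, $v:=y/M$, $\sigma:=\|x-y\|_\WW/M$ and multiplying through by $M^t$ gives precisely the inequality in the statement, completing the proof.
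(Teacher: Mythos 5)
Your argument is correct and complete. Note that the paper itself does not prove this proposition but cites it as \cite[Lemma 2]{charuc}, so there is no in-paper proof to compare against; you have supplied a self-contained one. Both directions check out: the reverse implication correctly extracts $g_t(0)=0$, $g_t>0$ on $(0,2]$, and the bound $g_t(2)\le 1$ by testing degenerate pairs, yielding a positive lower bound $1-(1-g_t(\eps))^{1/t}$ on the modulus of convexity. For the forward direction, the reduction to $\|x\|_\WW\vee\|y\|_\WW=1$ by $t$-homogeneity is sound, the definition of the defect function $h_t$ and the verification $h_t\ge 0$, $h_t(0)=0$, $h_t$ nondecreasing, $h_t\le 1$ are all accurate, and the dichotomy $r<1$ versus $r=1$ in the contradiction argument covers both ways the positivity of $h_t$ could fail: the first case exploits the strict convexity of $\rho\mapsto\rho^t$ applied to the scalars $\|u_n\|_\WW$, $\|v_n\|_\WW$, while the second renormalizes $v_n$ to the unit sphere and invokes the modulus of convexity $\delta_\WW(s/2)$. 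The final minorant construction $q_t(r)=\inf_{\rho\in[r,2]}h_t(\rho)/(\rho+1)$, $g_t(s)=\int_0^s q_t$, correctly turns the merely nondecreasing, possibly discontinuous $h_t$ into a continuous, strictly increasing $g_t\le h_t$ with $g_t(0)=0$, and unwinding the normalization by multiplying through by $M^t$ gives exactly the claimed inequality.
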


\begin{theorem}\label{thm:uc} Let $(\B, \|\cdot\|)$ be a separable Banach space such that $(\B^*, \|\cdot\|_*)$ is uniformly convex,  let $p,q \in (1,+\infty)$ be (not necessarily conjugate) fixed exponents  and let $\mm$ be a positive and finite Borel measure on $\prob_p(\B, \sfd_{\|\cdot\|})$. Then the Sobolev space $H^{1,q}(\prob_p(\B, \sfd_{\|\cdot\|}), W_{p, \sfd_{\|\cdot\|}}, \mm)$ is uniformly convex.
\end{theorem}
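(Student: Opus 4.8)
The plan is to prove that the Sobolev norm $\|\cdot\|_H:=\|\cdot\|_{H^{1,q}(\prob_p(\B, \sfd_{\|\cdot\|}), W_{p, \sfd_{\|\cdot\|}}, \mm)}$ satisfies the quantitative uniform convexity inequality of Proposition~\ref{prop:charuc} with exponent $t=q$, exploiting the splitting $\|F\|_H^q=\|F\|_{L^q}^q+\CE_q(F)$ and treating the two summands separately. First note that a uniformly convex Banach space is reflexive (Milman--Pettis), so $(\B^*,\|\cdot\|_*)$ is reflexive and hence so is $\B$; consequently Theorem~\ref{thm:refl} and Proposition~\ref{prop:diuc} apply. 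Write $X:=L^q(\prob_p(\B, \sfd_{\|\cdot\|}),\mm)$ and $Y:=\left(\int_{\prob_p(\B, \sfd_{\|\cdot\|})}^{\bigoplus}\VV_\mu\,\d\mm(\mu)\right)_{L^q}$. Then $X$ is uniformly convex since $q\in(1,+\infty)$, and $Y$ is uniformly convex by Proposition~\ref{prop:diuc}; Proposition~\ref{prop:charuc} with $t=q$ then provides strictly increasing continuous functions $g_q^X,g_q^Y\colon[0,2]\to[0,+\infty)$ realizing the respective inequalities in $X$ and in $Y$.

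The core step is to push the $Y$-inequality down to the Cheeger energy. From the proof of Theorem~\ref{thm:refl} recall the closed convex set $\gG\subset X\times Y$, defined as the closure of $\{(F,\rmD_pF):F\in\ccyl{\prob(\B)}{\rmC_b^1(\B)}\}$, the fibers $\dD_\mm F:=\{G\in Y:(F,G)\in\gG\}$, and the facts that $H^{1,q}(\prob_p(\B, \sfd_{\|\cdot\|}), W_{p, \sfd_{\|\cdot\|}}, \mm)\subset\pi^X(\gG)$ and $\min_{G\in\dD_\mm F}\|G\|_Y=\CE_q(F)^{1/q}$. The operator $\rmD_p$ is linear on cylinder functions: if $F=\psi\circ\lin{\pphi}$ and $F'=\psi'\circ\lin{\pphi'}$, then $aF+bF'=\widetilde\psi\circ\lin{(\pphi,\pphi')}$ with $\widetilde\psi(y,y'):=a\psi(y)+b\psi'(y')$, so $\rmD_p(aF+bF')=a\,\rmD_pF+b\,\rmD_pF'$ by Proposition~\ref{prop:equality}; passing to the closure $\gG$ this gives $a\,\dD_\mm F_1+b\,\dD_\mm F_2\subseteq\dD_\mm(aF_1+bF_2)$. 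Fix $F_1,F_2$ in the Sobolev space, set $M:=\CE_q(F_1)\vee\CE_q(F_2)$, and assume $M>0$ (if $M=0$ all Cheeger energies occurring below vanish, by convexity of $\CE_q$, and there is nothing to prove). Choose $G_i\in\dD_\mm F_i$ with $\|G_i\|_Y=\CE_q(F_i)^{1/q}$; then $\tfrac12(G_1+G_2)\in\dD_\mm\big(\tfrac12(F_1+F_2)\big)$ and $G_1-G_2\in\dD_\mm(F_1-F_2)$, whence $\CE_q\big(\tfrac12(F_1+F_2)\big)^{1/q}\le\big\|\tfrac12(G_1+G_2)\big\|_Y$ and $\CE_q(F_1-F_2)^{1/q}\le\|G_1-G_2\|_Y\le2M^{1/q}$. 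Substituting $G=G_1$, $H=G_2$ into Proposition~\ref{prop:charuc} for $Y$ and using that $g_q^Y$ is increasing yields
\begin{equation}\tag{A}
  \CE_q\Big(\tfrac{F_1+F_2}{2}\Big)\;\le\;\tfrac12\CE_q(F_1)+\tfrac12\CE_q(F_2)-M\,g_q^Y\Big(M^{-1/q}\,\CE_q(F_1-F_2)^{1/q}\Big).
\end{equation}

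For the $L^q$-summand, Proposition~\ref{prop:charuc} applied in $X$ gives directly, with $m:=\|F_1\|_{L^q}\vee\|F_2\|_{L^q}$ and the trivial convention when $m=0$,
\begin{equation}\tag{B}
  \Big\|\tfrac{F_1+F_2}{2}\Big\|_{L^q}^q\;\le\;\tfrac12\|F_1\|_{L^q}^q+\tfrac12\|F_2\|_{L^q}^q-m^q\,g_q^X\Big(m^{-1}\|F_1-F_2\|_{L^q}\Big).
\end{equation}
Summing (A) and (B) yields a quantitative uniform convexity estimate for $\|\cdot\|_H^q$. Now let $\|F_i\|_H\le1$ — hence $m\le1$, $M\le1$ — and $\|F_1-F_2\|_H\ge\eps$; since $\|F_1-F_2\|_{L^q}^q+\CE_q(F_1-F_2)\ge\eps^q$, one of the two summands is $\ge\eps^q/2$ (and in the degenerate cases $m=0$ or $M=0$ it is the strictly positive one). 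If $\|F_1-F_2\|_{L^q}\ge\eps\,2^{-1/q}$, the triangle inequality $\|F_1-F_2\|_{L^q}\le2m$ forces $m\ge\eps\,2^{-1-1/q}$, and $m\le1$ gives $m^{-1}\|F_1-F_2\|_{L^q}\ge\eps\,2^{-1/q}$, so the last term of (B) is $\ge(\eps\,2^{-1-1/q})^q\,g_q^X(\eps\,2^{-1/q})>0$. Symmetrically, if $\CE_q(F_1-F_2)^{1/q}\ge\eps\,2^{-1/q}$, then $\CE_q(F_1-F_2)^{1/q}\le2M^{1/q}$ forces $M^{1/q}\ge\eps\,2^{-1-1/q}$, and $M\le1$ gives $M^{-1/q}\CE_q(F_1-F_2)^{1/q}\ge\eps\,2^{-1/q}$, so the last term of (A) is $\ge(\eps\,2^{-1-1/q})^q\,g_q^Y(\eps\,2^{-1/q})>0$. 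In either case $\big\|\tfrac12(F_1+F_2)\big\|_H\le(1-\delta(\eps))^{1/q}<1$, where $\delta(\eps):=(\eps\,2^{-1-1/q})^q\min\{g_q^X(\eps\,2^{-1/q}),g_q^Y(\eps\,2^{-1/q})\}>0$; this is precisely uniform convexity.

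I expect the only genuinely delicate point to be the passage to (A): one must carry the direct-integral (``fiber'') structure of $\pCE_q$ through the relaxation defining $\CE_q$, and this is exactly what the graph $\gG$, the identity $\min_{\dD_\mm F}\|\cdot\|_Y=\CE_q^{1/q}$ from Theorem~\ref{thm:refl}, and the elementary linearity of $\rmD_p$ on cylinder functions make possible. Everything after (A) and (B) is routine bookkeeping for the $\ell^q$-combination of the two seminorms $\|\cdot\|_{L^q}$ and $\CE_q^{1/q}$.
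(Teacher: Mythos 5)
Your proof is correct, but it takes a genuinely different route to transfer the uniform-convexity inequality from the direct integral $Y$ to the Cheeger energy. The paper's proof applies Proposition~\ref{prop:charuc} to $Y$ and, via Proposition~\ref{prop:equality}, obtains the quantitative inequality \eqref{eq:disuguc} for $\pCE_q^{1/q}$ on cylinder functions; it then passes to $\CE_q^{1/q}$ for general Sobolev functions by picking approximating cylinder sequences with $\pCE_q(F_n)\to\CE_q(F)$, $\pCE_q(G_n)\to\CE_q(G)$ (Corollary~\ref{cor:densban}) and invoking the $L^q$-lower semicontinuity of the Cheeger energy together with the continuity and monotonicity of $g_t$; finally it cites the abstract fact that a $q$-sum of uniformly convex (semi)norms is uniformly convex. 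You instead work directly inside the graph $\gG\subset X\times Y$ from the proof of Theorem~\ref{thm:refl}: you observe that $\rmD_p$ is linear on cylinder functions, so $\gG$ is a closed linear subspace and the fibers satisfy $a\,\dD_\mm F_1+b\,\dD_\mm F_2\subseteq\dD_\mm(aF_1+bF_2)$; you then pick norm-minimizing elements $G_i\in\dD_\mm F_i$ realizing $\CE_q(F_i)^{1/q}=\|G_i\|_Y$, apply Proposition~\ref{prop:charuc} in $Y$ to the $G_i$, and obtain inequality (A) for $\CE_q$ with no limiting argument at all. Your route avoids the (mildly delicate) passage through approximating sequences and lower semicontinuity, at the small cost of redoing the $q$-sum combination by hand at the end rather than quoting the known theorem from \cite{clarkson}. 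Both arguments are sound; yours is arguably cleaner on the Cheeger side (it really uses the structure set up in Theorem~\ref{thm:refl}), while the paper's is shorter on the final assembly step.
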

\begin{proof} By the uniform convexity of $\left ( \int_{\prob_p(\B, \sfd_{\|\cdot\|})}^{\bigoplus} \VV_\mu \d \mm(\mu) \right )_{L^q}$ provided by Proposition \ref{prop:diuc} and using Proposition \ref{prop:charuc}, we have that for every $t \in (1,+\infty)$ there exists a continuous and strictly increasing function $g_t:[0,2]\to [0,+\infty)$ such that 
\begin{align}
    \begin{split}\label{eq:disuguc}
    \pCE_q \left ( \frac{1}{2}(F+G) \right )^{t/q} &+ \left (\pCE_q^{1/q}(F) \vee \pCE_q^{1/q}(G) \right )^t g_t \left ( \frac{ \pCE_q^{1/q}(F-G)}{\pCE_q^{1/q}(F) \vee \pCE_q^{1/q}(G)} \right ) \\
    &\le \frac{1}{2}\pCE_q^{t/q}(F) + \frac{1}{2}\pCE_q^{t/q}(G) \quad \text{ for every } F,G \in \ccyl{\prob(\B)}{\rmC_b^1(\B)}, 
    \end{split}
\end{align}
where we are able to identify the norm in the direct integral and the $q$-th root of the pre-Cheeger energy thanks to Proposition \ref{prop:equality}.
If $F,G \in H^{1,q}(\prob_p(\B, \sfd_{\|\cdot\|}), W_{p, \sfd_{\|\cdot}}, \mm)$, thanks to Corollary \ref{cor:densban}, we can find sequences $(F_n)_n, (G_n)_n \subset \ccyl{\prob(\B)}{\rmC_b^1(\B)}$ such that
\[ F_n \to F, \quad G_n \to G \quad \text{ in } L^q(\prob_p(\B, \sfd_{\|\cdot\|}), \mm), \quad \pCE_q(F_n) \to \CE_q(F), \quad \pCE_q(G_n) \to \CE_q(G). \]
By the lower semicontinuity of the Cheeger energy w.r.t.~the $L^q$ convergence and the continuity and monotonicity of $g_t$, we obtain that \eqref{eq:disuguc} holds for $\CE_{q}$ instead of $\pCE_{q}$. By Proposition \ref{prop:charuc} we obtain that $\CE_q^{1/q}$ is uniformly convex, in the sense that, for every $\eps>0$ there exists a $\delta(\eps)>0$ such that, whenever $F,G \in H^{1,q}(\prob_p(\B, \sfd_{\|\cdot\|}), W_{p, \sfd_{\|\cdot\|}}, \mm)$ are such that $\CE_q^{1/q}(F)=\CE_q^{1/q}(G)=1$ and $\CE_q^{1/q}(F-G) \ge \eps$, then $\CE_q^{1/q}(F+G) < 2(1-\delta(\eps))$. Since the $L^q(\prob_p(\B, \sfd_{\|\cdot\|}, \mm)$-norm is uniformly convex, the $q$-sum (i.e.~the $q$-th root of the sum of the $q$-powers) of it with $\CE_q^{1/q}$ is uniformly convex (see e.g.~\cite[Theorem 1]{clarkson}). This means that the Sobolev norm is uniformly convex and concludes the proof.
\end{proof}

Given a separable Banach space $(\B, \|\cdot\|)$ and a positive and finite Borel measure $\mm$ on $\B$, we can consider the measure $\mathsf{D}_\sharp \mm$ on $\prob(\B, \sfd_{\|\cdot\|})$, where $\mathsf{D}: \B \to \prob(\B, \sfd_{\|\cdot\|})$ is defined as 
\[\mathsf{D}(x):= \delta_x, \quad x \in \B.\]
It is immediate to check that, for every $q \in (1,+\infty)$, the Sobolev spaces $H^{1,q}(\B, \sfd_{\|\cdot\|}, \mm)$ and $H^{1,q}(\prob_2(\B, \sfd_{\|\cdot\|}), W_{2, \sfd_{\|\cdot\|}}, \mathsf{D}_\sharp \mm)$ are isomorphic, see also \cite[Section 5.2]{FSS22}. Notice that the choice $p=2$ is irrelevant, since $\mathsf{D}(\B) \subset \prob_p(\B, \sfd_{\|\cdot\|})$ for every $p \in [1,+\infty]$. 
As a consequence of this observation and Theorems \ref{thm:refl} and \ref{thm:uc} we obtain the following corollary.

\begin{corollary}\label{cor:grazieref} Let $(\B, \|\cdot\|)$ be a separable Banach space, let $q \in (1,+\infty)$ be a fixed exponent, and let $\mm$ be a positive and finite Borel measure on $\B$. If $(\B, \|\cdot\|)$ is reflexive (resp.~if $(\B^*, \|\cdot\|_*)$ is uniformly convex), then $H^{1,q}(\B, \sfd_{\|\cdot\|}, \mm)$ is reflexive (resp.~uniformly convex). 
\end{corollary}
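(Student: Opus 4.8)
The plan is to deduce the statement from the Wasserstein-space results, Theorems \ref{thm:refl} and \ref{thm:uc}, via the Dirac embedding recalled just before the statement. Set $\mathsf D:\B\to\prob_2(\B,\sfd_{\|\cdot\|})$, $\mathsf D(x):=\delta_x$; this is an isometry of metric spaces, and $\mathsf D_\sharp\mm$ is a positive and finite Borel measure on $\prob_2(\B,\sfd_{\|\cdot\|})$ with the same total mass as $\mm$. The first step is to make precise that $\mathsf D$ induces an isometric isomorphism of Banach spaces
\[ H^{1,q}(\B,\sfd_{\|\cdot\|},\mm)\;\cong\;H^{1,q}\big(\prob_2(\B,\sfd_{\|\cdot\|}),W_{2,\sfd_{\|\cdot\|}},\mathsf D_\sharp\mm\big). \]
This follows from the functoriality of metric Sobolev spaces along isometric embeddings: pulling back a function $F$ on $\prob_2(\B,\sfd_{\|\cdot\|})$ to $F\circ\mathsf D$ on $\B$ preserves the asymptotic Lipschitz constant along $\mathsf D(\B)$, hence identifies minimal relaxed gradients and Cheeger energies, so that the Sobolev norms match; this is exactly \cite[Theorem 5.3.3]{Savare22} (already used in the proof of Theorem \ref{thm:main3}) and is the content of \cite[Section 5.2]{FSS22}. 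The choice $p=2$ is immaterial, since $\mathsf D(\B)\subset\prob_p(\B,\sfd_{\|\cdot\|})$ for every $p\in[1,+\infty]$.

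Granting this identification, I would simply invoke the two transfer theorems with $p=2$. If $(\B,\|\cdot\|)$ is reflexive, Theorem \ref{thm:refl} yields that $H^{1,q}(\prob_2(\B,\sfd_{\|\cdot\|}),W_{2,\sfd_{\|\cdot\|}},\mathsf D_\sharp\mm)$ is reflexive; since reflexivity is preserved under linear isomorphism, $H^{1,q}(\B,\sfd_{\|\cdot\|},\mm)$ is reflexive. Likewise, if $(\B^*,\|\cdot\|_*)$ is uniformly convex, Theorem \ref{thm:uc} yields that $H^{1,q}(\prob_2(\B,\sfd_{\|\cdot\|}),W_{2,\sfd_{\|\cdot\|}},\mathsf D_\sharp\mm)$ is uniformly convex; uniform convexity is an isometric invariant, so $H^{1,q}(\B,\sfd_{\|\cdot\|},\mm)$ is uniformly convex as well.

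There is no genuine obstacle here: the argument is a one-line reduction once the isomorphism is in place. The only point deserving care is the word \emph{isometric} in the first step --- for the reflexivity half a mere topological linear isomorphism would suffice, but for uniform convexity one must know that the identification preserves the Sobolev norm, which is why I would cite \cite[Theorem 5.3.3]{Savare22} in the sharper (norm-preserving) form rather than just as an isomorphism of Banach spaces.
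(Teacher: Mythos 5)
Your argument is exactly the paper's: identify $H^{1,q}(\B,\sfd_{\|\cdot\|},\mm)$ with $H^{1,q}(\prob_2(\B,\sfd_{\|\cdot\|}),W_{2,\sfd_{\|\cdot\|}},\mathsf D_\sharp\mm)$ via the Dirac embedding and then invoke Theorems \ref{thm:refl} and \ref{thm:uc}. Your extra remark that the identification must be isometric (not merely a topological isomorphism) for the uniform-convexity half is a correct and welcome point of care that the paper leaves implicit.
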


We remark that the statement regarding reflexivity in the above corollary was already known (see e.g.~\cite[Corollary 5.3.11]{Savare22}) while the part concerning uniform convexity is new, at least to our knowledge.

\subsection{Clarkson's type inequalities}\label{sec:clar}
In this subsection we show how one can deduce Clarkson's type inequalities for the Sobolev norm starting from analogous inequalities that are satisfied for the dual norm in the separable Banach space $\B$. The following inequality was introduced for the  first  time by \cite{boas} and it is a generalization of the classical Clarkson's inequalities \cite{clarkson}.

\begin{definition}\label{def:clark} Let $V$ be a real vector space and let $r,s \in (1,+\infty)$. We say that a functional $\jj: V \to [0,+\infty)$ satisfies the $(r,s)$-Boas inequality ($(r,s)$-(BI)) in $V$ if
\[ \left ( \jj(u+v)^r + \jj(u-v)^r \right )^{1/r} \le 2^{1/s'} \left (\jj(u)^s+\jj(v)^s \right )^{1/s} \quad \text{ for every } u,v \in V. \]
\end{definition}

\begin{lemma}\label{le:diolep} Let $V$ be a real vector space, let $q \in (1,+\infty)$ and let $\jj_1, \jj_2: V \to [0, +\infty)$ be two functionals satisfying the $(r,s)$-(BI) in $V$ for some $r,s \in (1,+\infty)$ such that $s \le q \le r$. Then the functional $\jj: V \to [0, +\infty)$ defined as
\[ \jj(u) := \left ( \jj_1(u)^q + \jj_2(u)^q \right )^{1/q}, \quad u \in V,\]
satisfies the $(r,s)$-(BI) in $V$ as well.
\end{lemma}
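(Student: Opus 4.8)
The statement is a purely algebraic/analytic fact about functionals on a vector space, so the proof will combine the two $(r,s)$-Boas inequalities for $\jj_1$ and $\jj_2$ with elementary inequalities relating the $\ell^r$, $\ell^q$ and $\ell^s$ norms on $\R^2$. The plan is to fix $u,v \in V$, write $a_i := \jj_i(u)$, $b_i := \jj_i(u+v)$, $c_i := \jj_i(u-v)$ for $i=1,2$, and organise the computation around the vectors $(\jj_1(\cdot),\jj_2(\cdot)) \in \R^2$ equipped with the $\ell^q$-norm, which is exactly $\jj$.

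\textbf{Key steps.} First I would rewrite the target quantity $\bigl(\jj(u+v)^r + \jj(u-v)^r\bigr)^{1/r}$ as the $\ell^r$-norm (in the index taking values $u+v$ and $u-v$) of the sequence whose entries are the $\ell^q$-norms $\bigl(b_i^q + c_i^q\bigr)^{1/q}$. Since $q \le r$, the natural move is to interchange the order of the two norms using Minkowski's inequality in its "norm of norms" form: for a doubly-indexed nonnegative array and exponents $q \le r$ one has
\[
\Bigl\| \, \|\cdot\|_{\ell^q_i} \, \Bigr\|_{\ell^r_j} \le \Bigl\| \, \|\cdot\|_{\ell^r_j} \, \Bigr\|_{\ell^q_i}.
\]
Applying this with $j$ ranging over $\{u+v, u-v\}$ and $i \in \{1,2\}$ gives
\[
\bigl(\jj(u+v)^r + \jj(u-v)^r\bigr)^{1/r} \le \left( \sum_{i=1}^{2} \bigl(b_i^r + c_i^r\bigr)^{q/r} \right)^{1/q}.
\]
Next, apply the $(r,s)$-(BI) for each $\jj_i$ separately to bound each inner factor $\bigl(b_i^r + c_i^r\bigr)^{1/r} = \bigl(\jj_i(u+v)^r + \jj_i(u-v)^r\bigr)^{1/r} \le 2^{1/s'}\bigl(\jj_i(u)^s + \jj_i(v)^s\bigr)^{1/s}$. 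Substituting yields
\[
\bigl(\jj(u+v)^r + \jj(u-v)^r\bigr)^{1/r} \le 2^{1/s'} \left( \sum_{i=1}^{2} \bigl(\jj_i(u)^s + \jj_i(v)^s\bigr)^{q/s} \right)^{1/q}.
\]
Finally, since $s \le q$, I would again use Minkowski's "norm of norms" inequality — this time in the opposite regime, interchanging an outer $\ell^q$ over $i$ with an inner $\ell^s$ over the pair $\{u,v\}$, noting $\|\cdot\|_{\ell^q} \le \|\cdot\|_{\ell^s}$ pointwise is the wrong direction, so instead use that $\bigl(\sum_i (\alpha_i^s + \beta_i^s)^{q/s}\bigr)^{1/q} \le \bigl((\sum_i \alpha_i^q)^{s/q} + (\sum_i \beta_i^q)^{s/q}\bigr)^{1/s}$, which is exactly Minkowski in $\ell^{q/s} \ge 1$ applied to the two sequences $(\alpha_i^s)_i$ and $(\beta_i^s)_i$. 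With $\alpha_i = \jj_i(u)$, $\beta_i = \jj_i(v)$ this gives
\[
\left( \sum_{i=1}^{2} \bigl(\jj_i(u)^s + \jj_i(v)^s\bigr)^{q/s} \right)^{1/q} \le \bigl( \jj(u)^s + \jj(v)^s \bigr)^{1/s},
\]
and combining the three displays completes the proof.

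\textbf{Main obstacle.} There is no deep obstacle here; the only thing that requires care is getting the two applications of Minkowski's inequality pointed in the correct directions, since $q$ lies between $s$ and $r$ and the two interchanges exploit opposite inequalities ($q \le r$ in the first, $s \le q$ in the second). A clean way to keep this straight is to recall the general principle that for a nonnegative array $x_{ij}$ and $1 \le \beta \le \alpha$, $\|\,\|x_{i\cdot}\|_{\alpha}\,\|_{\beta} \ge \|\,\|x_{\cdot j}\|_{\beta}\,\|_{\alpha}$ (the "outer" smaller exponent dominates), apply it with $(\alpha,\beta) = (r,q)$ for the first step and with exponents scaled by $s$ (i.e. to the array $x_{ij}^s$ with exponents $q/s$ and $1$) for the last step. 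I would also double-check the constant: the factor $2^{1/s'}$ appears exactly once, coming out of the per-$\jj_i$ Boas inequalities and surviving both Minkowski interchanges unchanged, which is consistent with $\jj$ needing to satisfy $(r,s)$-(BI) with the same constant.
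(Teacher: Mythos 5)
Your proof is correct and follows essentially the same route as the paper's: you use the triangle inequality for the $\ell^{r/q}$-norm on $\R^2$, then the two componentwise $(r,s)$-Boas inequalities, then the triangle inequality for the $\ell^{q/s}$-norm — exactly the paper's three ingredients, just chained forward from $\jj(u+v),\jj(u-v)$ rather than backward from $\jj(u),\jj(v)$, and phrased as Minkowski's "norm-of-norms" interchange rather than the equivalent $\ell^\alpha$ triangle inequality. One small stylistic remark: in your third step the aside about "$\|\cdot\|_{\ell^q}\le\|\cdot\|_{\ell^s}$ pointwise is the wrong direction" conflates the monotonicity of $\ell^p$-norms with the norm-of-norms interchange, but the interchange does apply directly here ($s\le q$, put the smaller exponent outside), and your recovery via Minkowski in $\ell^{q/s}$ is the same inequality in disguise, so the argument is sound.
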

\begin{proof}
Let $u,v \in V$; let us set
\begin{align*}
&a_0:=\jj_1(u+v), \quad &&b_0:=\jj_1(u-v), \quad &&c_0:=\jj_2(u+v), \quad &&d_0:=\jj_2(u-v), \\
&a:=\jj_1(u), \quad &&b:=\jj_1(v), \quad &&c:=\jj_2(u), \quad &&d:=\jj_2(v).
\end{align*}
Since $\jj_1$ and $\jj_2$ satisfy the $(r,s)$-(BI) in $V$, we have
\begin{align}\label{eq:bos1}
    \left ( a_0^r + b_0^r \right )^{1/r} &\le 2^{1/s'} \left (a^s+b^s \right )^{1/s}, \\ \label{eq:bos2}
    \left ( c_0^r + d_0^r \right )^{1/r} &\le 2^{1/s'} \left (c^s+d^s \right )^{1/s}.
\end{align}
Then 
\begin{align*}
    2^{1/s'} \left (\jj(u)^s+\jj(v)^s \right )^{1/s} &= 2^{1/s'} \left ( (a^q+c^q)^{s/q} + (b^q+d^q)^{s/q} \right )^{1/s}\\
    &= 2^{1/s'} \left ( ((a^{s})^{q/s}+(c^s)^{q/s})^{s/q} + ((b^s)^{q/s}+(d^s)^{q/s})^{s/q} \right )^{1/s}\\
    & \ge 2^{1/s'} \left ( \left ( a^s+b^s \right )^{q/s} + \left ( c^s+d^s \right )^{q/s} \right )^{1/q} \\
    & \ge \left ( \left ( a_0^r+b_0^r\right )^{q/r}  + \left ( c_0^r + d_0^r\right )^{q/r} \right )^{1/q}\\
    & = \left ( \left ( (a_0^q)^{r/q}+(b_0^q)^{r/q}\right )^{q/r}  + \left ( (c_0^q)^{r/q} + (d_0^q)^{r/q}\right )^{q/r} \right )^{1/q} \\
    & \ge \left ( (a_0^q+c_0^q)^{r/q} + (b_0^q+d_0^q)^{r/q} \right )^{1/r} \\
    &= \left ( \jj(u+v)^r + \jj(u-v)^r \right )^{1/r},
\end{align*}
where we have used the triangular inequality for the $\alpha$-norm in $\R^2$ first with $\alpha=q/s$ (from the second to the third line) and then with $\alpha=r/q$ (from the fifth to the sixth line), and the inequalities \eqref{eq:bos1} and \eqref{eq:bos2} to pass from the third to the fourth line.
\end{proof}

\begin{proposition}\label{thm:hilb} Let $(X, \sfd, \mm)$ be a Polish metric-measure space, let $\AA$ be a separating unital subalgebra of
  $\Lip_b(X,\sfd)$ satisfying \eqref{eq:214-15} and let $q, r,s \in (1,+\infty)$. If $(\pCE_q)^{1/q}$ satisfies the $(r,s)$-(BI) in $\AA$, then $(\CE_q)^{1/q}$ satisfies the $(r,s)$-(BI) in $H^{1,q}(X,\sfd,\mm)$. If in addition $r' \le s \le q \le r$, then the Sobolev norm in $H^{1,q}(X,\sfd,\mm)$ satisfies the $(r,s)$-(BI).
\end{proposition}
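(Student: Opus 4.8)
The plan is to prove the first assertion by a density‑plus‑lower‑semicontinuity argument, and then to deduce the second one from it by feeding the result into Lemma~\ref{le:diolep}, using the classical Clarkson inequalities for $L^q$.

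\emph{First assertion.} Fix $f,g\in H^{1,q}(X,\sfd,\mm)$. Since $\AA$ satisfies \eqref{eq:214-15}, Theorem~\ref{theo:startingpoint} together with Remark~\ref{rem:dense2} supplies sequences $(f_n)_n,(g_n)_n\subset\AA$ with $f_n\to f$, $g_n\to g$ in $L^q(X,\mm)$ and $\pCE_q(f_n)\to\CE_q(f)$, $\pCE_q(g_n)\to\CE_q(g)$. As $\AA$ is a vector space, $f_n\pm g_n\in\AA$, so the assumed $(r,s)$-(BI) for $(\pCE_q)^{1/q}$ in $\AA$ reads
\[
\big(\pCE_q(f_n+g_n)^{r/q}+\pCE_q(f_n-g_n)^{r/q}\big)^{1/r}\le 2^{1/s'}\big(\pCE_q(f_n)^{s/q}+\pCE_q(g_n)^{s/q}\big)^{1/s}
\]
for every $n\in\N$. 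The right‑hand side converges, hence the two sequences $n\mapsto\pCE_q(f_n\pm g_n)$ are bounded; passing to a common subsequence we may assume they converge, say to $a\ge0$ and $b\ge0$. Since $f_n\pm g_n\to f\pm g$ in $L^q(X,\mm)$, the relaxation formula \eqref{eq:relpreq} (with the algebra there taken to be all of $\Lip_b(X,\sfd)$) gives $\CE_q(f+g)\le a$ and $\CE_q(f-g)\le b$. Letting $n\to\infty$ in the displayed inequality and using these two bounds yields
\[
\big(\CE_q(f+g)^{r/q}+\CE_q(f-g)^{r/q}\big)^{1/r}\le 2^{1/s'}\big(\CE_q(f)^{s/q}+\CE_q(g)^{s/q}\big)^{1/s},
\]
which is exactly the $(r,s)$-(BI) for $(\CE_q)^{1/q}$ in $H^{1,q}(X,\sfd,\mm)$.

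\emph{Second assertion.} Assume in addition $r'\le s\le q\le r$. I would apply Lemma~\ref{le:diolep} on the vector space $V:=H^{1,q}(X,\sfd,\mm)$ with $\jj_1(f):=\|f\|_{L^q(X,\mm)}$ and $\jj_2(f):=\CE_q(f)^{1/q}$, so that $\jj=(\jj_1^q+\jj_2^q)^{1/q}=\|\cdot\|_{H^{1,q}(X,\sfd,\mm)}$ and both $\jj_1,\jj_2$ are finite $[0,+\infty)$‑valued on $V$. The hypothesis provides $s\le q\le r$, as the lemma requires; $\jj_2$ satisfies the $(r,s)$-(BI) in $V$ by the first assertion; and $\jj_1$, being the $L^q$-norm, satisfies the $(r,s)$-(BI) on all of $L^q(X,\mm)$ — hence a fortiori on the subspace $V$ — whenever $r'\le s\le q\le r$, by the generalized Clarkson inequalities of \cite{boas} (which recover \cite{clarkson} in the cases $r=s=q\ge2$ and $s=q\le2$, $r=q'$). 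Lemma~\ref{le:diolep} then yields the $(r,s)$-(BI) for $\|\cdot\|_{H^{1,q}(X,\sfd,\mm)}$, completing the proof.

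\emph{Main obstacle.} The delicate point is the limiting step in the first assertion: one must first use the $(r,s)$-(BI) estimate itself to bound $\pCE_q(f_n\pm g_n)$, then extract a \emph{single} subsequence along which both of these converge, and only then invoke the $L^q$-lower semicontinuity of $\CE_q$ for the two arguments $f+g$ and $f-g$ simultaneously. The remaining work — verifying finiteness of $\jj_1,\jj_2$ on $V$ and that the Clarkson–Boas range is precisely $r'\le s\le q\le r$ — is routine.
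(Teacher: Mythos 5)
Your proof is correct and follows essentially the same route as the paper: the paper's argument for the first assertion simply cites the density--plus--$L^q$-lower-semicontinuity argument of \cite[Theorem 2.15]{FSS22}, which is precisely what you unpack (using Theorem~\ref{theo:startingpoint}, Remark~\ref{rem:dense2}, the bound on $\pCE_q(f_n\pm g_n)$ coming from the inequality itself, and the relaxation formula~\eqref{eq:relpreq}), and the second assertion is handled identically via Lemma~\ref{le:diolep} with $\jj_1=\|\cdot\|_{L^q}$ and $\jj_2=\CE_q^{1/q}$, invoking \cite[Theorem 1]{boas} for the $L^q$-norm.
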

\begin{proof} The proof that $(\CE_q)^{1/q}$ satisfies the $(r,s)$-(BI) in $H^{1,q}(X,\sfd,\mm)$ is the same of \cite[Theorem 2.15]{FSS22} for the case $r=s=q=2$. If in addition we know that $r' \le s \le q \le r$, then the $L^q(X, \mm)$-norm satisfies the $(r,s)$-(BI) in $H^{1,q}(X,\sfd,\mm)$ \cite[Theorem 1]{boas} so that we can apply Lemma \ref{le:diolep} and conclude that the Sobolev norm satisfies the $(r,s)$-(BI) in $H^{1,q}(X,\sfd,\mm)$.
\end{proof}

In the next Proposition \ref{prop:clarkpce} we show how to deduce the $(r,s)$-(BI) for the pre-Cheeger energy from the same inequality for the $L^{p'}(\B, \mu; (\B^*,\|\cdot\|_*))$ norm. Instead of stating our hypothesis in terms of the validity of a Clarkson's inequality in $(\B, \|\cdot\|)$, we state it directly in terms of the (BI) in $L^{p'}(\B, \mu; (\B^*,\|\cdot\|_*))$: the two notions are different in general, we refer to \cite{katsumoto} for the treatment of the relation between them.

\begin{proposition}\label{prop:clarkpce} Let $(\B, \|\cdot\|)$ be a separable Banach space and let $\mm$ be a positive and finite Borel measure on $\prob_p(\B, \sfd_{\|\cdot\|})$; suppose that there exists $(r,s)$ with $1 <s\le q \le r$ such that the $L^{p'}(\B, \mu; (\B^*,\|\cdot\|_*))$-norm satisfies the $(r,s)$-(BI) (cf.~Definition \ref{def:clark}) in $L^{p'}(\B, \mu; (\B^*,\|\cdot\|_*))$ for $\mm$-a.e.~$\mu \in \prob_p(\B, \sfd_{\|\cdot\|})$. Then the functional $\jj: \ccyl{\prob(\B)}{\rmC_b^1(\B)} \to [0,+\infty)$ defined as
\[ \jj(F):= (\pCE_{q}(F))^{1/q}, \quad F \in \ccyl{\prob(\B)}{\rmC_b^1(\B)},\]
satisfies the $(r,s)$-(BI) in $\ccyl{\prob(\B)}{\rmC_b^1(\B)}$.
\end{proposition}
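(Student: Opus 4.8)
The plan is to reduce the statement to the pointwise-in-$\mu$ validity of the $(r,s)$-Boas inequality in the fibres $L^{p'}(\B,\mu;(\B^*,\|\cdot\|_*))$ and then to pass to the $L^q(\prob_p(\B, \sfd_{\|\cdot\|}),\mm)$-integrated version of that inequality, the ordering $1<s\le q\le r$ being used exactly as in Lemma \ref{le:diolep} but with $L^{r/q}(\mm)$ and $L^{q/s}(\mm)$ in place of the two-point spaces there.

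First I would record two preliminary facts. (a) $\ccyl{\prob(\B)}{\rmC_b^1(\B)}$ is a vector space and $F\mapsto\rmD_p F$ is linear on it: given $F=\psi\circ\lin\pphi$ and $G=\chi\circ\lin\eeta$, one represents both over the concatenated family $(\pphi,\eeta)$ and uses the $\rmC_b^1$ function $(y,z)\mapsto\psi(y)\pm\chi(z)$ to see that $F\pm G\in\ccyl{\prob(\B)}{\rmC_b^1(\B)}$, with $\rmD_p(F\pm G)[\mu]=\rmD_p F[\mu]\pm\rmD_p G[\mu]$ in $L^{p'}(\B,\mu;(\B^*,\|\cdot\|_*))$ for every $\mu$ (this is well posed by Proposition \ref{prop:equality}, cf.~\eqref{eq:Fdiff}). (b) Since every $H\in\ccyl{\prob(\B)}{\rmC_b^1(\B)}$ lies in $\Lipb(\prob_p(\B, \sfd_{\|\cdot\|}),W_{p, \sfd_{\|\cdot\|}})$, the definition \eqref{eq:prec} of the pre-Cheeger energy and Proposition \ref{prop:equality} yield
\[\pCE_q(H)=\int_{\prob_p(\B, \sfd_{\|\cdot\|})}\|\rmD_p H[\mu]\|_{*,p',\mu}^{q}\,\d\mm(\mu),\]
with $\mu\mapsto\|\rmD_p H[\mu]\|_{*,p',\mu}=\lip_{W_{p, \sfd_{\|\cdot\|}}}H(\mu)$ Borel, so that every integral below is meaningful.

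Next, I would fix $F,G\in\ccyl{\prob(\B)}{\rmC_b^1(\B)}$ and set, for $\mm$-a.e.~$\mu$, $\alpha(\mu):=\|\rmD_p(F+G)[\mu]\|_{*,p',\mu}$, $\beta(\mu):=\|\rmD_p(F-G)[\mu]\|_{*,p',\mu}$, $a(\mu):=\|\rmD_p F[\mu]\|_{*,p',\mu}$, $b(\mu):=\|\rmD_p G[\mu]\|_{*,p',\mu}$. By (a), $\rmD_p(F\pm G)[\mu]=\rmD_p F[\mu]\pm\rmD_p G[\mu]$ in $L^{p'}(\B,\mu;(\B^*,\|\cdot\|_*))$, so applying the hypothesised $(r,s)$-(BI) in that space to the pair $(\rmD_p F[\mu],\rmD_p G[\mu])$ gives
\[\big(\alpha(\mu)^r+\beta(\mu)^r\big)^{1/r}\le 2^{1/s'}\big(a(\mu)^s+b(\mu)^s\big)^{1/s}\qquad\text{for }\mm\text{-a.e.~}\mu.\]

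Finally there remains the passage from this pointwise bound to the inequality for the $L^q(\mm)$-norms $\jj(F+G)=\|\alpha\|_{L^q(\mm)}$, $\jj(F-G)=\|\beta\|_{L^q(\mm)}$, $\jj(F)=\|a\|_{L^q(\mm)}$, $\jj(G)=\|b\|_{L^q(\mm)}$, which is the step I expect to be the main obstacle. Since $r/q\ge 1$, Minkowski's integral inequality applied to the $\ell^{r/q}$-valued map $\mu\mapsto(\alpha(\mu)^{q},\beta(\mu)^{q})$ gives
\[\big(\jj(F+G)^r+\jj(F-G)^r\big)^{q/r}\le\int_{\prob_p(\B, \sfd_{\|\cdot\|})}\big(\alpha^r+\beta^r\big)^{q/r}\,\d\mm ,\]
and, raising the pointwise bound to the power $q$ and using the triangle inequality in $L^{q/s}(\mm)$ (valid since $q/s\ge 1$) for the nonnegative functions $a^{s},b^{s}$,
\[\int\big(\alpha^r+\beta^r\big)^{q/r}\,\d\mm\le 2^{q/s'}\int\big(a^s+b^s\big)^{q/s}\,\d\mm=2^{q/s'}\big\|a^s+b^s\big\|_{L^{q/s}(\mm)}^{q/s}\le 2^{q/s'}\big(\|a\|_{L^q(\mm)}^{s}+\|b\|_{L^q(\mm)}^{s}\big)^{q/s}.\]
Chaining the last two displays and taking $q$-th roots yields $\big(\jj(F+G)^r+\jj(F-G)^r\big)^{1/r}\le 2^{1/s'}\big(\jj(F)^s+\jj(G)^s\big)^{1/s}$, i.e.~the $(r,s)$-(BI) for $\jj$ in $\ccyl{\prob(\B)}{\rmC_b^1(\B)}$. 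The only genuinely delicate point is this last paragraph, which is the integral analogue of the two-exponent computation in Lemma \ref{le:diolep} and uses the ordering $1<s\le q\le r$ twice (Minkowski at exponent $r/q$, triangle inequality in $L^{q/s}(\mm)$); everything else is bookkeeping resting on Proposition \ref{prop:equality}.
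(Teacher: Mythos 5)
Your argument is correct and is essentially the same as the paper's: both start from the pointwise $(r,s)$-Boas inequality in each fibre $L^{p'}(\B,\mu;(\B^*,\|\cdot\|_*))$ applied to the pair $(\rmD_p F[\mu],\rmD_p G[\mu])$, and then pass to the $L^q(\mm)$-integrated inequality by exploiting $q/r\le 1$ (reverse Minkowski in $L^{q/r}(\mm)$, which is exactly your Minkowski integral inequality for the $\ell^{r/q}$-valued map) on the left-hand side and $q/s\ge 1$ (ordinary Minkowski in $L^{q/s}(\mm)$) on the right-hand side, invoking Proposition \ref{prop:equality} to identify $\|\rmD_p H[\mu]\|_{*,p',\mu}$ with $\lip_{W_{p,\sfd_{\|\cdot\|}}}H(\mu)$ and hence $\pCE_q$. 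Your preliminary fact (a) on the vector-space structure of $\ccyl{\prob(\B)}{\rmC_b^1(\B)}$ and the linearity of $F\mapsto\rmD_p F$ is left implicit in the paper's proof, so spelling it out is a small but welcome addition.
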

\begin{proof}
Let $F, G \in \ccyl{\prob(\B)}{\rmC_b^1(\B)}$. We denote by $\|\cdot\|_{*,p', \mu}$ the $L^{p'}(\B, \mu; (\B^*,\|\cdot\|_*))$-norm for $\mu \in \prob_p(\B, \sfd_{\|\cdot\|})$ and we set
\[ U_\mu(x):= \rmD_p F(\mu, x), \quad V_\mu(x) := \rmD_p G(\mu, x), \quad (\mu, x) \in \prob_p(\B, \sfd_{\|\cdot\|}) \times \B.\]
 By hypothesis we have
\[ \left ( \|U_\mu+V_\mu\|_{*, p', \mu}^r + \|U_\mu-V_\mu\|_{*, p', \mu}^r \right )^{1/r} \le 2^{1/s'} \left (\|U_\mu\|_{*, p', \mu}^s+\|V_\mu\|_{*, p', \mu}^s \right )^{1/s} \]
for $\mm$-a.e.~$\mu \in \prob_p(\B, \sfd_{\|\cdot\|})$.
Computing the $L^q(\prob_p(\B, \sfd_{\|\cdot\|}), \mm)$-norm on both sides and applying Minkowski inequalities in $L^{q/r}(\prob_p(\B, \sfd_{\|\cdot\|}), \mm)$ and in $L^{q/s}(\prob_p(\B, \sfd_{\|\cdot\|}), \mm)$ (notice that $0 <q/r \le 1$ and $q/s \ge 1$), we get
\begin{align*}
\left ( \left ( \int_{\prob_p(\B, \sfd_{\|\cdot\|})} \|U_\mu+V_\mu\|_{*, p', \mu}^q \, \d \mm(\mu) \right )^{r/q} + \left ( \int_{\prob_p(\B, \sfd_{\|\cdot\|})} \|U_\mu-V_\mu\|_{*, p', \mu}^q \, \d \mm(\mu) \right )^{r/q} \right )^{1/r} \le \\
 2^{1/s'} \left ( \left ( \int_{\prob_p(\B, \sfd_{\|\cdot\|})} \|U_\mu\|_{*, p', \mu}^q \, \d \mm(\mu) \right )^{s/q} + \left ( \int_{\prob_p(\B, \sfd_{\|\cdot\|})} \|V_\mu\|_{*, p', \mu}^q \, \d \mm(\mu) \right )^{s/q} \right )^{1/s}
\end{align*}
i.e.
\[ \left ( \jj(F+G)^r + \jj(F-G)^r  \right )^{1/r} \le 2^{1/s'} \left ( \jj(F)^s + \jj(G)^s\right )^{1/s},\]
by Proposition \ref{prop:equality}.
\end{proof}

\begin{theorem}\label{cor:ilprimo} In the same hypotheses of Proposition \ref{prop:clarkpce}, assume in addition that $r'\le s$. Then the Sobolev norm in $H^{1,q}(\prob_p(\B, \sfd_{\|\cdot\|}),W_{p, \sfd_{\cdot}}, \mm)$ satisfies the $(r,s)$-(BI).
\end{theorem}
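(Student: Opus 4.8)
The plan is to deduce the statement directly from the abstract transfer result of Proposition~\ref{thm:hilb}, specialized to the Wasserstein metric-measure space $(\prob_p(\B,\sfd_{\|\cdot\|}),W_{p,\sfd_{\|\cdot\|}},\mm)$, using the cylinder algebra as the distinguished subalgebra. First I would record that this triple is a Polish metric-measure space and that $\AA:=\ccyl{\prob(\B)}{\rmC_b^1(\B)}$ is a separating unital subalgebra of $\Lip_b(\prob_p(\B,\sfd_{\|\cdot\|}),W_{p,\sfd_{\|\cdot\|}})$: every $\lin\phi$ is bounded and $W_{p,\sfd_{\|\cdot\|}}$-Lipschitz by~\eqref{eq:123}, composition with a $\rmC^1_b$ outer function preserves boundedness and Lipschitzianity, the constants lie in $\AA$, and since $\rmC^1_b(\B)$ separates the points of $\B$ the maps $\lin\pphi$ separate the points of $\prob(\B)$. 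Moreover, by Corollary~\ref{cor:densban}, $\AA$ is dense in $q$-energy in $D^{1,q}(\prob_p(\B,\sfd_{\|\cdot\|}),W_{p,\sfd_{\|\cdot\|}},\mm)$, which by Theorem~\ref{theo:startingpoint} (taking the dense subset to be the whole space) is equivalent to the validity of~\eqref{eq:214-15} for $\AA$. Hence $\AA$ satisfies all the structural hypotheses required by Proposition~\ref{thm:hilb}.

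Next I would feed in the analytic input. Under the standing hypotheses of Proposition~\ref{prop:clarkpce} — namely $1<s\le q\le r$ together with the $(r,s)$-(BI) for the $L^{p'}(\B,\mu;(\B^*,\|\cdot\|_*))$-norm for $\mm$-a.e.~$\mu$ — Proposition~\ref{prop:clarkpce} asserts exactly that the functional $F\mapsto (\pCE_q(F))^{1/q}$ satisfies the $(r,s)$-(BI) on $\AA$. This is precisely the premise of the first assertion of Proposition~\ref{thm:hilb}, which then yields that $(\CE_q)^{1/q}$ satisfies the $(r,s)$-(BI) on all of $H^{1,q}(\prob_p(\B,\sfd_{\|\cdot\|}),W_{p,\sfd_{\|\cdot\|}},\mm)$; the mechanism there is the lower semicontinuity of the Cheeger energy along the energy-dense approximation by $\AA$, combined with the continuity and monotonicity of the exponents involved in the $(r,s)$-(BI).

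Finally I would append the additional assumption $r'\le s$ of the present statement to the already available chain $s\le q\le r$, obtaining $r'\le s\le q\le r$. This is exactly the regime covered by the second assertion of Proposition~\ref{thm:hilb}: in this range the $L^q(\prob_p(\B,\sfd_{\|\cdot\|}),\mm)$-norm itself satisfies the $(r,s)$-(BI) by~\cite{boas}, and Lemma~\ref{le:diolep} combines it with $(\CE_q)^{1/q}$ (both now known to satisfy the inequality) to conclude that the Sobolev norm, being the $q$-sum of the two, satisfies the $(r,s)$-(BI). I do not anticipate a real obstacle here: the genuine work has already been done in Proposition~\ref{prop:clarkpce} (transferring the Boas inequality through the $L^q$-direct integral by two applications of Minkowski's inequality) and in Proposition~\ref{thm:hilb}; the only point demanding a moment's verification is that the cylinder algebra legitimately plays the role of the abstract subalgebra $\AA$, i.e.\ that it is unital, separating, contained in $\Lip_b$, and dense in energy — all consequences of the earlier sections.
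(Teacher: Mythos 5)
Your proposal is correct and follows exactly the route the paper takes: combine Proposition~\ref{prop:clarkpce} (to get the $(r,s)$-(BI) for the pre-Cheeger energy on cylinder functions), Corollary~\ref{cor:densban} (to verify the density/\eqref{eq:214-15} hypothesis), and Proposition~\ref{thm:hilb} (to transfer the inequality to the Cheeger energy and then, with $r'\le s\le q\le r$, to the full Sobolev norm). The paper states this in a single line; you have merely spelled out the verification that the cylinder algebra qualifies as $\AA$, which is routine but accurate.
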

\begin{proof} This follows combining Proposition \ref{thm:hilb}, Corollary \ref{cor:densban} and Proposition \ref{prop:clarkpce}.
\end{proof}
\begin{corollary} If $\B=\R^d$ for some $d \in \N$ and $|\cdot|$ is the Euclidean norm, then the Sobolev norm in $H^{1,q}(\prob_p(\R^d, |\cdot|),W_{p, \sfd_{|\cdot|}}, \mm)$ satisfies the $(r,s)$-(BI) for every $1 < r' \le s \le q, p'\le r$.
\end{corollary}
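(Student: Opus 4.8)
The strategy is to verify the hypotheses of Theorem~\ref{cor:ilprimo} for the Euclidean norm. Recall that Theorem~\ref{cor:ilprimo} rests on Proposition~\ref{prop:clarkpce}, whose only assumption, besides $1<s\le q\le r$ and $r'\le s$, is that for $\mm$-a.e.\ $\mu\in\prob_p(\R^d,|\cdot|)$ the norm of the Bochner space $L^{p'}(\R^d,\mu;(\R^d,|\cdot|))$ satisfies the $(r,s)$-(BI) in $L^{p'}(\R^d,\mu;(\R^d,|\cdot|))$. The point of restricting to the Euclidean norm is that $|\cdot|$ is self-dual, so the target space $(\R^d,|\cdot|)=(\B^*,\|\cdot\|_*)$ is a Hilbert space; in particular it is isometric to $L^2(\{1,\dots,d\},\#)$, with $\#$ the counting measure.

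Thus I would first prove the purely functional-analytic statement that, for every Hilbert space $H$, every $\sigma$-finite measure $\mu$ and every $p'\in(1,+\infty)$, the Bochner space $L^{p'}(\mu;H)$ satisfies the $(r,s)$-(BI) for the same pairs $(r,s)$ as the scalar space $L^{p'}(\mu)$, i.e.\ whenever $1<r'\le s\le p'\le r$. There are two natural routes: either run Boas's scalar argument \cite[Theorem~1]{boas} verbatim, using that the parallelogram identity holds pointwise in the $H$-variable so that every pointwise Clarkson-type estimate reduces to the one-dimensional one; or, using the isometry $H\cong L^2(\nu)$ from the previous paragraph, identify $L^{p'}(\mu;H)$ with the mixed-norm space $L^{p'}(\mu;L^2(\nu))$ and invoke the known validity of Clarkson's inequalities for such spaces (see \cite{clarkson,boas}, and \cite{katsumoto} for the relation between Clarkson's inequalities in a Banach space and the (BI) in the associated Bochner space). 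Since the resulting range of admissible $(r,s)$ does not depend on $\mu$, the $(r,s)$-(BI) holds in $L^{p'}(\R^d,\mu;(\R^d,|\cdot|))$ for \emph{all} $\mu\in\prob_p(\R^d,|\cdot|)$, hence a fortiori for $\mm$-a.e.\ such $\mu$.

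With this in hand, the proof is concluded by a direct application of the results already established: for $(r,s)$ in the admissible range (so in particular $1<r'\le s\le q$ and $p'\le r$, after intersecting with $s\le q\le r$ and $s\le p'\le r$), Proposition~\ref{prop:clarkpce} yields the $(r,s)$-(BI) for $(\pCE_q)^{1/q}$ on $\ccyl{\prob(\R^d)}{\rmC_b^1(\R^d)}$; the density in $q$-energy of the cylinder functions (Corollary~\ref{cor:densban}) together with the lower semicontinuity of the Cheeger energy transfers it to $(\CE_q)^{1/q}$ on $H^{1,q}(\prob_p(\R^d,|\cdot|),W_{p,\sfd_{|\cdot|}},\mm)$, exactly as in Proposition~\ref{thm:hilb}; and finally, since $r'\le s\le q\le r$, the scalar space $L^q(\prob_p(\R^d,|\cdot|),\mm)$ satisfies the $(r,s)$-(BI) by \cite[Theorem~1]{boas}, so Lemma~\ref{le:diolep} combines the two contributions into the $(r,s)$-(BI) for the full Sobolev norm. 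I expect the Hilbert-valued Clarkson inequalities in the correct range to be the only delicate step; once the reduction to either the scalar Boas inequality or to mixed-norm $L^{p'}(L^2)$ spaces is made, the remaining implications are formal.
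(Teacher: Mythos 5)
Your proof is correct and follows essentially the same approach as the paper: reduce to the $(r,s)$-(BI) for the Hilbert-valued Bochner space $L^{p'}(\R^d,\mu;(\R^d,|\cdot|))$, valid whenever $1<r'\le s\le p'\le r$, and then apply Theorem~\ref{cor:ilprimo}. You unfold Theorem~\ref{cor:ilprimo} into its constituent steps (Proposition~\ref{prop:clarkpce}, Corollary~\ref{cor:densban}, Proposition~\ref{thm:hilb}, Lemma~\ref{le:diolep}) and spend more words on why the Hilbert-valued Boas inequality holds, but the logical route is the same as the paper's one-line deduction.
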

\begin{proof} This follows by the fact that the $L^{p'}(\R^d, \mu; (\R^d, |\cdot|))$-norm satisfies the $(r,s)$-(BI) for every $1 < r' \le s \le p'\le r$ and Theorem \ref{cor:ilprimo}.
\end{proof}
\medskip
\paragraph{\em\bfseries Data availability statement}
Data sharing not applicable to this article as no datasets were generated or analysed during the current study.

\bibliographystyle{plain}
\bibliography{biblio}
\end{document}